\newcommand{\R}{\mathbb{R}}
\newcommand{\N}{\mathbb{N}}
\newcommand{\Sp}[1]{\mathrm{span}\{#1\}}
\newcommand{\inner}[2]{\ifthenelse{\equal{#2}{}}{\left\langle\cdot,\cdot\right\rangle_{#1}}{\left\langle#2\right\rangle_{#1}}}
\newcommand{\norm}[2]{\ifthenelse{\equal{#2}{}}{\left\|\cdot\right\|_{#1}}{\left\|#2\right\|_{#1}}}
\newcommand{\seminorm}[2]{\ifthenelse{\equal{#2}{}}{\left|\cdot\right|_{#1}}{\left|#2\right|_{#1}}}
\newcommand{\calh}{\mathcal H}
\newcommand{\fa}{\hbox{ for all }}
\DeclareMathOperator*{\argmin}{arg\min}
\DeclareMathOperator*{\argmax}{arg\max}
\newtheorem{theorem}{Theorem}
\newtheorem{prop}[theorem]{Proposition}
\newtheorem{cor}[theorem]{Corollary}
\newtheorem{definition}[theorem]{Definition} 
\newtheorem{rem}[theorem]{Remark}
\newtheorem{example}[theorem]{Example}
\title{Sampling based approximation of linear functionals in Reproducing Kernel Hilbert Spaces}
\author[1]{Gabriele Santin \thanks{gsantin@fbk.eu, \href{http://orcid.org/0000-0001-6959-1070}{orcid.org/0000-0001-6959-1070}}}
\author[2]{Toni Karvonen \thanks{tkarvonen@turing.ac.uk}}
\author[3]{Bernard Haasdonk \thanks{haasdonk@mathematik.uni-stuttgart.de}}
\affil[1]{Center for Information and Communication Technology, Fondazione Bruno Kessler, Italy}
\affil[2]{The Alan Turing Institute, United Kingdom}
\affil[3]{Institute for Applied Analysis and Numerical Simulation, University of Stuttgart, Germany}
\begin{document}
\maketitle

\begin{abstract}
In this paper we analyze a greedy procedure to approximate a linear functional defined in a Reproducing Kernel Hilbert Space by nodal values. This procedure 
computes a quadrature 
rule which can be applied to general functionals, including integration functionals.

For a large class of functionals, we prove convergence results for the approximation by means of uniform and greedy points which
generalize in various ways several known results. A perturbation analysis of the weights and node computation is also discussed. 

Beyond the theoretical investigations, we demonstrate numerically that our algorithm is effective in treating various integration densities, and that it is 
even very 
competitive
when compared to existing methods for Uncertainty Quantification.
\end{abstract}

\section{Introduction}\label{sec:intro}

Given a strictly positive definite kernel $K:\Omega\times\Omega\to\R$ on a bounded, measurable set $\Omega\subset\R^d$ and a linear and continuous functional 
$L\in\calh'$ 
in the dual of the associated reproducing kernel Hilbert space $\calh$, we are interested in the construction of quadrature-like formulas (or nodal 
approximants) that 
approximate 
$L(f)$ for all $f\in\calh$.
This means that we look for pairwise distinct centers $X:=\left\{x_i\right\}_{i=1}^n\subset\Omega$ and 
weights $W := \left(w_i\right)_{i=1}^n \in \R^n$ 
such that 
\begin{align*}
Q_{X, W, L}(f) := \sum_{i=1}^n w_i f(x_i)\approx L(f) \;\; \fa f\in \calh.
\end{align*}
We measure the approximation quality of $Q_{X,W,L}$ by means of the worst case error on the unit ball of $\calh$, i.e.,
\begin{align*}
e_{\calh}\left(Q_{X, W,L}\right) := \sup\limits_{\norm{\calh}{f}\leq 1} \seminorm{}{Q_{X, W, L}(f) - L(f)}.
\end{align*}
In terms of this error measure, for a given set $X$ there exist optimal weights $W^*:=W^*(X)$, i.e.,
\begin{align*}
W^*:= \argmin\limits_{W\in\R^n} e_{\calh}\left(Q_{X, W,L}\right),
\end{align*}
and we use the notation $Q_{X,L}:=Q_{X,W^*,L}$ for the weight-optimal quadrature formula. 

We will discuss in the following (see Proposition \ref{prop:optimal_weights}) how these weights can be computed explicitly, but here we anticipate in 
particular that, if $v_L\in\calh$ is the Riesz representer 
of 
$L$, 
and if $\Pi_X(v_L)$ is the $\calh$-orthogonal projection of $v_L$ into $V(X):=\Sp{K(\cdot, x)\,\colon\, x\in X}\subset\calh$ , then it holds that
\begin{align*}
\Pi_X v_L = \sum_{i=1}^n w^*_i K(\cdot, x_i)
\end{align*}
and
\begin{align*}
e_{\calh}\left(Q_{X, L}\right) = \norm{\calh}{v_L - \Pi_X v_L}.
\end{align*}
Since it is well known that $\Pi_X v_L$ coincides with the interpolant of $v_L$ on the points $X$, this means that the optimal weights can be easily computed 
via 
standard 
kernel-based interpolation, and the worst-case error coincides with the $\calh$-norm interpolation error of $v_L$ on $X$.

Assuming that these optimal weights are used, the question of selecting the centers remains open, and the goal of this paper is to analyze a 
particularly simple greedy algorithm to do so. The algorithm has been introduced in \cite{Schaback2014c}, although we give here a more explicit 
characterization. It 
starts from an empty set of centers and iteratively chooses a point among the ones that guarantee the 
maximal reduction of the worst-case error. This allows to distribute adaptive and possibly non uniform centers tailored 
to the specific $L$, and this property is particularly attractive especially in high dimensions. 

We show that the algorithm is in fact the $f/P$-greedy algorithm of \cite{Mueller2009} known in kernel interpolation, and applied to 
the Riesz representer $v_L$ of $L$. In particular we recall how it can be efficiently described and implemented in terms of the Newton basis as in 
\cite{Muller2009,Pazouki2011}.

We then prove two types of error estimates for the approximation of a special class of functionals $L\in\calh'$, namely those which are continuous on 
$\calh$ also with respect to the $L_q(\Omega)$ norm for some $1\leq q\leq\infty$, 
i.e., such that there exists $1\leq q\leq\infty$ and $c_L\geq 0$ with
\begin{align}\label{eq:smoothness_L}
|L(f)|\leq c_L \norm{L_q(\Omega)}{f}\;\;\fa f\in\calh. 
\end{align}
First, for certain translational invariant kernels we provide convergence orders for weight-optimal quadrature 
rules with quasi-uniform sets of centers. These results are a direct consequence of the error rates known for kernel interpolation, and the resulting speed of 
convergence depends on the input dimension $d$, the value of $q$, and the smoothness of the kernels. In particular, smoother kernels lead to faster 
convergence. 
For some specific functionals, which are included in our analysis, this result coincides with the ones of \cite{Kanagawa2019}.

Second, for fairly general kernels we prove convergence with rate $n^{-1/2}$ for the new greedy algorithm, where $n$ is the number of centers. Although various 
experiments suggest that 
this rate is far from optimal, it is nevertheless dimension independent and it applies to a wide class of kernels, namely, continuous kernels on bounded 
domains. Moreover, also for translational 
invariant kernels this rate is strictly better than the one for uniform points for a 
range of values of $d$, $q$, and of the smoothness of the kernel, where the range is wider for increasing $d$ if $q$ is fixed, i.e., the result improves with 
the growth of the input dimension.

The motivation for the study of the class of functionals \eqref{eq:smoothness_L} comes from integration functionals $L(f):=\int_{\Omega} f(x) \nu(x) dx$, where 
$\nu\in L_{p}(\Omega)$ for some $1\leq p\leq \infty$. Indeed, in this case we can take $q$ such that 
$\frac1p+\frac1q=1$ and $c_L:= \norm{L_p(\Omega)}{\nu}$, since
\begin{align*}
\left|L(f)\right| \leq \int_{\Omega} \left|f(x)\right| \left|\nu(x)\right| dx \leq \norm{L_p(\Omega)}{\nu} \norm{L_q(\Omega)}{f}.
\end{align*}
In this case $Q_{X,L}$ is a quadrature formula in the classical sense. Observe in particular that this class includes also the case of 
$\nu\in L_p(\Omega)\setminus L_{\infty}(\Omega)$ for some $1\leq p<\infty$, which is not covered in \cite{Kanagawa2019}.

However, our analysis comprises other interesting examples that will be discussed in Section \ref{sec:approx}. In particular, we can consider functionals $L$ 
which 
represent any quadrature rule with bounded weights, including Monte Carlo ones. In this case, constructing a quadrature rule $Q_{X,L}$ via the greedy algorithm 
means to 
find an approximated quadrature with possibly much less centers, or a compression of the quadrature $L$.

Moreover, given the equivalence between the weight-optimal quadrature of $L$ and the interpolation of $v_L$, and the equivalence between the new greedy 
algorithm for 
quadrature and the $f/P$-greedy algorithm for interpolation, our analysis alternatively applies to the interpolation via $f/P$-greedy of the class of functions 
that are 
Riesz representers of functionals satisfying the condition \eqref{eq:smoothness_L}. The convergence results will then be also be convergence results in $\calh$ 
for 
interpolation. 
These results are potentially very interesting, as we remark that for general functions $v\in \calh$, the error $\norm{\calh}{v - \Pi_X v}$ can decay 
arbitrarily slowly 
even for nicely chosen $X$ (see e.g. \cite[Section 8.4.2]{Iske2019}). Instead, for this class of functions, rates of convergence are obtained here for 
interpolation with 
both uniform and greedy points. 

Two notable examples comprised in this function class are worth mentioning. First, using $q=\infty$, for any $B>0$ the class contains the set 
\begin{align*}
\calh_B:=\left\{v:= \sum_{i\in I} \alpha_i K(\cdot, x_i) \,:\, \sum_{i\in I}|\alpha_i|\leq B\right\}\subset\calh,
\end{align*}
which is commonly used to study convergence rates of greedy algorithms (see e.g. \cite{DeVore1996,Temlyakov2008,Barron2008}). For this set our results 
on the greedy quadrature coincide with the rates obtained in \cite{Wirtz2013} for the $f/P$-greedy algorithm. 

Second, for Mercer kernels (e.g., continuous on a bounded $\Omega$, see \cite{Steinwart2012} for a general analysis) it can be proven that the operator 
$
 T: L_2(\Omega)\to L_2(\Omega)
$
given by
\begin{align*}
T(f) := \int_{\Omega} K(x, y) f(y) dy,
\end{align*}
has an image $I_T:= T(L_2(\Omega))$ which  is dense in $\calh$. Functions in $I_T$ are central in the study of superconvergence in kernel spaces 
\cite{Schaback1999a,Schaback2018a}, and they are covered here with $q=2$.

On the computational side we remark that both the selection of the greedy points and the computation of the optimal weights can be performed by the sole 
knowledge of 
the Riesz representer $v_L$. As we will recall, this can be computed rather efficiently and explicitly and, when it needs instead to be approximated, we give 
stability 
bounds on the resulting perturbed quadrature formula. This easiness of computation is very advantageous if the 
quadrature rule 
is then applied to the approximation of $L(f)$ for a given $f$ whose evaluation is expensive. This is the case for example for Uncertainty 
Quantification, where an integration 
functional is used to estimate the mean and variance of $f$.
Alternatively, when only few evaluations are available it can be advantageous to leverage the well-known equivalence between kernel interpolation and Gaussian 
process regression~\cite[Chapter~17]{FasshauerMcCourt2015} and view $Q_{X,L}(f)$ as a Gaussian random variable whose standard deviation, equal to the worst 
case 
error, attempts to quantify the epistemic uncertainty in the approximation $Q_{X,L}(f) \approx L(f)$.
This approach has been especially popular in integration, where it is known as the Bayesian quadrature~\cite{Larkin1972,OHagan1991,Briol2019}.
The greedy algorithm we study here is sometimes called the sequential Bayesian quadrature in this context~\cite{HuszarDuvenaud2012}.

We mention also that other data-based algorithms to approximate linear functionals exists in different settings, and are actively investigated e.g. in the 
setting of empirical interpolation and reduced order modelling \cite{Brown_2016,Yano2019,10.1093/imanum/drx072,Antil2013}.

The paper is structured as follows. In Section \ref{sec:approx} we recall some basic facts on kernel spaces, list some properties of linear and continuous 
functionals on 
these spaces, and recall the computation and properties of weight-optimal quadrature rules and their connection with interpolation. Section \ref{sec:greedy} 
introduces 
the greedy algorithm and discusses its equivalence with the $f/P$-greedy algorithm for interpolation. The convergence results for uniform points and 
translational 
invariant kernels are discussed in Section \ref{sec:convergence-uniform}, while the ones for greedy points and general kernels are presented in Section
\ref{sec:convergence-greedy}. Some stability results are shown in Section \ref{sec:stability}, providing in particular bounds on the worst case error of a 
quadrature rule obtained from a perturbed Riesz representer. Finally, the greedy method is tested on both synthetic examples and on a benchmark problem in 
Uncertainty Quantification in Section \ref{sec:numerics}.

\section{Kernels and approximation of linear functionals}\label{sec:approx}

We recall some basics of kernel theory, and for a more general treatment we refer e.g. to \cite{Wendland2005,Fasshauer2007}.
A strictly positive definite (s.p.d.) kernel on $\Omega\subset\R^d$ is a symmetric function $K \colon 
\Omega\times\Omega\to\R$ such that for 
all $n\in\N$ and for all sets 
$X_n:=\{x_i\}_{i=1}^n\subset\Omega$ of pairwise distinct 
points, the kernel matrix 
$A:=A(K, X_n)\in \R^{n\times n}$ defined by $A_{ij}:= K(x_i, x_j)$ is positive definite. We assume this condition in the following, and additionally that 
$\Omega\subset\R^d$ is bounded and Lebesgue measurable, and $K$ is continuous in both variables.

Each s.p.d.\ kernel is uniquely associated to a reproducing kernel Hilbert space (RKHS) $\calh:=\calh_K(\Omega)$ with inner product $\inner{\calh}{}$, which is 
usually 
called native space of $K$ on $\Omega$, and which is a Hilbert space of functions $f:\Omega \to\R$ such that $K(\cdot, x)\in\calh$ for all $x\in\Omega$ and 
$\inner{\calh}{f, K(\cdot, x)} = f(x)$ for all $x\in\Omega$ and $f\in\calh$. 

These two properties mean in particular that the function $v_{\delta_x}(\cdot):= K(\cdot, x)$ is an element of $\calh$ for all $x\in\Omega$, and that it is the 
Riesz representer of the linear functional $\delta_x:\calh\to\R$, $\delta_x(f):= f(x)$, which is thus continuous, i.e.,  $\delta_x\in\calh'$. Actually also the 
converse holds, 
i.e., any Hilbert space where the set $\left\{\delta_x:x\in\Omega\right\}$ is contained in $\calh'$ is an RKHS, and the corresponding kernel is strictly 
positive definite 
whenever the elements of this set are also linearly independent. 

Observe that for these particular functionals the Riesz representer $v_{\delta_x}$ can be obtained by applying the functional $\delta_x$ to one of the two 
variables of the kernel, i.e., $v_{\delta_x} = K(\cdot, x) = \left(\delta_x\right)^y (K(\cdot, y))$, where the upper index denotes the variable with respect to 
which the functional is applied. This is actually always the case, as we recall in the next proposition.
\begin{prop}[Riesz representer {\cite[Theorem 16.7]{Wendland2005}}]
Let $L\in\calh'$ be a linear and continuous functional on $\calh$. Then the Riesz representer $v_L\in\calh$ of $L$ is given by $v_L(\cdot) = L^y(K(\cdot,y))$. 
\end{prop}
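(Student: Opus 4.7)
The plan is to combine the abstract Riesz representation theorem with the reproducing property, using the sections $K(\cdot,x)$ of the kernel as test functions to pin down the pointwise values of $v_L$.

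First, I would invoke the Riesz representation theorem in the Hilbert space $\calh$: since $L\in\calh'$, there exists a unique element $v_L\in\calh$ such that $L(f)=\inner{\calh}{f,v_L}$ for every $f\in\calh$. This gives existence and uniqueness of the representer as an abstract object, but without an explicit formula.

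Second, to extract a pointwise expression, I would exploit the fact that the kernel sections lie in the RKHS: for any fixed $x\in\Omega$, the function $K(\cdot,x)$ belongs to $\calh$ by the defining property of the reproducing kernel. Substituting $f=K(\cdot,x)$ into the Riesz identity yields
\begin{align*}
L\bigl(K(\cdot,x)\bigr) = \inner{\calh}{K(\cdot,x),v_L} = v_L(x),
\end{align*}
where the second equality uses the reproducing property. Using the symmetry $K(\cdot,x)=K(x,\cdot)$, the left-hand side equals $L^y\bigl(K(x,y)\bigr)$ by the definition of the superscript notation (indicating that $L$ acts on the variable $y$). Since $x\in\Omega$ is arbitrary, this establishes the identity $v_L(\cdot)=L^y\bigl(K(\cdot,y)\bigr)$.

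There is essentially no obstacle here beyond checking that the notation makes sense: the expression $L^y(K(x,y))$ is well defined precisely because $y\mapsto K(x,y)\in\calh$ for each fixed $x$, so that the continuous functional $L$ can legitimately act on it. The whole argument is really a two-line unpacking of the reproducing property once Riesz has supplied the abstract representer.
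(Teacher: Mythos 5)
Your argument is correct and is the standard textbook proof of this fact: invoke the abstract Riesz representation theorem for existence and uniqueness of $v_L$, then test against the kernel sections $K(\cdot,x)\in\calh$ and apply the reproducing property together with the symmetry of $K$ to read off $v_L(x)=L^y(K(x,y))$. The paper itself gives no proof — it simply cites Wendland's Theorem 16.7, which proceeds exactly along these lines — so your proposal faithfully fills in the omitted argument.
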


Strictly positive definite kernels allow especially to solve 
interpolation problems, as stated in the following proposition, which is a collection of various classical results (see e.g. \cite{Wendland2005})

\begin{prop}[Kernel interpolation]\label{prop:interpolation}
For an s.p.d. kernel $K$ and a set $X:= \{x_i\}_{i=1}^n\subset\Omega$ of pairwise distinct points, we denote as $V(X):=\Sp{K(\cdot, x)\,\colon\, x\in X}$ the 
subspace of $\calh$ spanned by the kernel translates at $X$, and as $\Pi_{X}: \calh \to V(X)$ the $\calh$-orthogonal projector into $V(X)$.

Then for each $v\in\calh$ the function $\Pi_{X} v$ interpolates $v$ at $X$. Moreover, we have 
\begin{align}\label{eq:int_def}
\Pi_{X} v = \sum_{i=1}^n \alpha_i K(\cdot, x_i),
\end{align}
where $\alpha\in\R^n$ is the unique solution of the linear system $A \alpha = \left(v(x_1), \dots, v(x_n)\right)^T$ with $A$ the kernel matrix of $K$ on $X$.
\end{prop}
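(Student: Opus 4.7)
The plan is to rely on only two ingredients: the variational characterization of the orthogonal projection $\Pi_X$, namely that $v-\Pi_X v$ is $\calh$-orthogonal to every element of $V(X)$, and the reproducing property $\inner{\calh}{f, K(\cdot, x)} = f(x)$. Since $V(X)$ is finite dimensional, hence closed, $\Pi_X$ is well defined. Moreover, $V(X)$ is by construction spanned by $\{K(\cdot, x_i)\}_{i=1}^n$, so any of its elements — and in particular $\Pi_X v$ — automatically admits a representation of the form $\sum_{i=1}^n \alpha_i K(\cdot, x_i)$. Thus the shape \eqref{eq:int_def} is built into the definition of the projection, and what remains is to verify the interpolation property, exhibit the linear system for $\alpha$, and argue uniqueness.

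For the interpolation property, I would test the orthogonality relation against each $K(\cdot, x_i) \in V(X)$ and invoke the reproducing property:
\[
0 \;=\; \inner{\calh}{v - \Pi_X v, K(\cdot, x_i)} \;=\; v(x_i) - (\Pi_X v)(x_i),
\]
so $\Pi_X v$ agrees with $v$ on $X$. For the linear system, I would substitute the ansatz $\Pi_X v = \sum_j \alpha_j K(\cdot, x_j)$ into this interpolation identity and apply the reproducing property once more (equivalently, use the symmetry of $K$) to obtain $\sum_j \alpha_j K(x_i, x_j) = v(x_i)$, i.e., $A\alpha = (v(x_1), \ldots, v(x_n))^T$.

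Uniqueness of $\alpha$ is the only point that might at first sight look nontrivial, but it follows immediately from the strict positive definiteness of $K$: at pairwise distinct centers $A$ is positive definite, hence invertible, so $\alpha$ is uniquely determined. The main obstacle here is essentially absent — the whole argument is a formal consequence of the Hilbert-space projection identity, the reproducing property, and the s.p.d.\ hypothesis — and the only step that deserves a moment of care is that the reproducing property is invoked in two slightly different roles: once to convert $\calh$-orthogonality into a pointwise interpolation condition, and once to evaluate the inner product of two kernel translates as a matrix entry.
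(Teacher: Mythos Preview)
Your argument is correct and is exactly the standard one: orthogonality of $v-\Pi_X v$ to each $K(\cdot,x_i)$ plus the reproducing property yields the interpolation condition, substituting the expansion gives $A\alpha=(v(x_1),\dots,v(x_n))^T$, and strict positive definiteness makes $A$ invertible. The paper does not actually give a proof of this proposition --- it is stated as a collection of classical results with a reference to Wendland's book --- so there is nothing to compare against, and your write-up would serve perfectly well as the omitted proof.
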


This proposition implies in particular that the interpolant of an arbitrary function in $\calh$ can be computed on arbitrary pairwise distinct points. 
Moreover, this 
interpolant coincides with the orthogonal projection into $V(X)$, and it is thus an optimal approximant with respect to the $\calh$-norm. As mentioned in the 
introduction, if these properties are translated to the approximation of a linear functional the following results are derived. 

\begin{prop}[Weights-optimal quadrature]\label{prop:optimal_weights}
Let $L\in\calh'$ and let $X:=\{x_i\}_{i=1}^n\subset\Omega$ be pairwise distinct. For a given set of weights $W:=\left(w_i\right)_{i=1}^n\in\R^n$ define
\begin{align*}
Q_{X, W, L}(f): = \sum_{i=1}^n w_i f(x_i),\;\; f\in \calh,
\end{align*}
and the corresponding worst-case error
\begin{align*}
e_{\calh}\left(Q_{X, W,L}\right) := \sup\limits_{\norm{\calh}{f}\leq 1} \seminorm{}{Q_{X, W, L}(f) - L(f)}. 
\end{align*}
Then there exist unique weights that minimize the worst case error given $L$ and $X$, i.e., 
\begin{align}\label{eq:optimal_weights}
W^*:= \argmin\limits_{W\in\R^n} e_{\calh}\left(Q_{X, W,L}\right),
\end{align}
and they are the coefficients of the orthogonal projection of $v_L$ into $V(X)$, i.e.,  
\begin{align}\label{eq:proj_vs_weights}
\Pi_X v_L = \sum_{i=1}^n w^*_i K(\cdot, x_i).
\end{align}
Moreover, $\Pi_X v_L$ is the Riesz representer of the functional $Q_{X, L} := Q_{X, W^*, L}\in\calh'$, it holds
\begin{align}\label{eq:int_vs_quad}
Q_{X, L}(f) = L\left(\Pi_X f\right) \;\;\fa\;\; f\in\calh,
\end{align}
and 
\begin{align}\label{eq:int_vs_quad_error}
e_{\calh}\left(Q_{X, L}\right) = \norm{\calh}{v_L - \Pi_X v_L}. 
\end{align}
\end{prop}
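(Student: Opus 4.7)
The plan is to reduce the entire proposition to the Riesz representation theorem applied to the error functional, and then recognize the resulting minimization as a best-approximation problem in $V(X)$.

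First, I would observe that the functional $E_W := Q_{X,W,L} - L \in \mathcal{H}'$, being a linear combination of point evaluations (each in $\mathcal{H}'$ because $K$ is s.p.d.) minus $L \in \mathcal{H}'$. By the preceding Riesz representer proposition, its representer is
\begin{align*}
r_W := \sum_{i=1}^n w_i K(\cdot, x_i) - v_L \in V(X) - v_L,
\end{align*}
since the representer of $\delta_{x_i}$ is $K(\cdot, x_i)$. Consequently
\begin{align*}
e_{\mathcal{H}}(Q_{X,W,L}) = \sup_{\|f\|_{\mathcal{H}} \leq 1} |\langle f, r_W\rangle_{\mathcal{H}}| = \|r_W\|_{\mathcal{H}}.
\end{align*}

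Next, I would note that minimizing $\|r_W\|_{\mathcal{H}}$ over $W \in \mathbb{R}^n$ is exactly the problem of finding the best approximation of $v_L$ in the finite-dimensional subspace $V(X)$. Since $K$ is s.p.d., the kernel translates $\{K(\cdot,x_i)\}_{i=1}^n$ are linearly independent, so $V(X)$ is an $n$-dimensional closed subspace and the orthogonal projection $\Pi_X v_L$ exists, is unique, and has unique coefficients in this basis. Identifying the minimizing $W^*$ with these coefficients yields both existence/uniqueness of $W^*$ and the identity $\Pi_X v_L = \sum_i w_i^* K(\cdot, x_i)$, together with $e_{\mathcal{H}}(Q_{X,L}) = \|v_L - \Pi_X v_L\|_{\mathcal{H}}$, which proves \eqref{eq:optimal_weights}, \eqref{eq:proj_vs_weights}, and \eqref{eq:int_vs_quad_error}.

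For the remaining claims, by construction $Q_{X,L}$ is a finite linear combination of $\delta_{x_i}$, hence lies in $\mathcal{H}'$, and its Riesz representer is $\sum_i w_i^* K(\cdot, x_i) = \Pi_X v_L$. To derive \eqref{eq:int_vs_quad}, I would use self-adjointness of the orthogonal projection $\Pi_X$ together with the reproducing property: for every $f \in \mathcal{H}$,
\begin{align*}
Q_{X,L}(f) = \langle f, \Pi_X v_L\rangle_{\mathcal{H}} = \langle \Pi_X f, v_L\rangle_{\mathcal{H}} = L(\Pi_X f).
\end{align*}

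The only step requiring real care is justifying the passage from $\sup_{\|f\|\leq 1} |\langle f, r_W\rangle|$ to $\|r_W\|_{\mathcal{H}}$ (a direct application of Cauchy--Schwarz with equality at $f = r_W/\|r_W\|$, or trivially $r_W = 0$) and confirming that the unique minimizer in $V(X)$ indeed has unique coordinate representation; both follow immediately from the s.p.d.\ assumption, so no genuine obstacle arises. The proposition is essentially a repackaging of the Riesz theorem together with the uniqueness of orthogonal projection onto a finite-dimensional subspace.
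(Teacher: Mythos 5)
Your proposal is correct and follows essentially the same route as the paper: identify the Riesz representer of the error functional (the paper first computes the representer $v_W = \sum_i w_i K(\cdot,x_i)$ of $Q_{X,W,L}$ and then forms $\norm{\calh}{v_W - v_L}$, while you go directly to $r_W = v_W - v_L$), recognize minimization over $W$ as best approximation in $V(X)$ solved by orthogonal projection (with uniqueness of coefficients from the s.p.d.\ assumption), and finish \eqref{eq:int_vs_quad} by self-adjointness of $\Pi_X$. No gap.
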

\begin{proof}
First observe that for any given $W\in\R^n$ the quadrature formula as an operator $Q_{X, W, L}:\calh\to \R$ is clearly linear. It is also continuous since  
for any $f\in\calh$ it holds
\begin{align*}
\seminorm{}{Q_{X,W,L}(f)} &= \seminorm{}{\sum_{i=1}^n w_i f(x_i)} 
= \left|\sum_{i=1}^n w_i \inner{\calh}{K(\cdot, x_i), f}\right|
= \left|\inner{\calh}{\sum_{i=1}^n w_i K(\cdot, x_i), f}\right|\\
&\leq \norm{\calh}{f} \norm{\calh}{\sum_{i=1}^n w_i K(\cdot, x_i)}
= \norm{\calh}{f} \sqrt{\sum_{i=1}^n \sum_{j=1}^n w_i w_j K(x_j, x_i)},
\end{align*}
and this proves both that $Q_{X,W,L}\in\calh'$ and that its Riesz representer is 
\begin{align}\label{eq:weights-intermediate1}
v_W :=  \sum_{i=1}^n w_i K(\cdot, x_i). 
\end{align}
Using these facts, and still for generic weights $W\in\R^n$, we can bound the worst case error as 
\begin{align}\label{eq:weights-intermediate2}
\nonumber e_{\calh}\left(Q_{X, W,L}\right) 
&= \sup\limits_{\norm{\calh}{f}\leq 1} \seminorm{}{Q_{X, W, L}(f) - L(f)}
= \sup\limits_{\norm{\calh}{f}\leq 1} \left|\inner{\calh}{v_W, f} - \inner{\calh}{v_L, f}\right|\\
\nonumber&= \sup\limits_{\norm{\calh}{f}\leq 1} \left|\inner{\calh}{v_W - v_L, f}\right|
\leq \sup\limits_{\norm{\calh}{f}\leq 1} \norm{\calh}{v_W - v_L} \norm{\calh}{f}\\
&= \norm{\calh}{v_W - v_L},
\end{align}
and since equality is reached for $f:= (v_W - v_L)/ \norm{\calh}{v_W - v_L}$ with $\norm{\calh}{f}=1$, we can conclude that $e_{\calh}\left(Q_{X, W,L}\right)  
= 
\norm{\calh}{v_W - v_L}$.

Now, \eqref{eq:proj_vs_weights} holds since for any choice of $W$ we have from \eqref{eq:weights-intermediate1} that $v_W\in V(X)$ and thus 
$e_{\calh}\left(Q_{X, W,L}\right) = \norm{\calh}{v_W - v_L}$ is minimized uniquely by $v_W:= \Pi_{X} \left(v_L\right)$, by the best approximation property of 
orthogonal projections. We thus have that 
\eqref{eq:weights-intermediate2} becomes \eqref{eq:int_vs_quad_error}, and by uniqueness of the projection also \eqref{eq:optimal_weights} holds.

Finally, we just use the fact that orthogonal projections are self adjoint to obtain
\begin{align*}
Q_{X, L}(f) = \inner{\calh}{\Pi_X v_L, f}=\inner{\calh}{v_L, \Pi_X f} = L\left(\Pi_X f\right) \;\;\fa f\in\calh,
\end{align*}
which proves \eqref{eq:int_vs_quad}.
\end{proof}
Observe that this proposition implies that in practice the quadrature weights can be found just by computing the interpolant of $v_L$ at the 
points $X$, and, according to Proposition \ref{prop:interpolation}, this corresponds to the solution of a linear system. Moreover, \eqref{eq:int_vs_quad} 
proves that 
applying the quadrature formula is equivalent to exactly applying $L$ to the interpolant, or that $Q_{X, L}$ is exact on $V(X)$.

\begin{rem}[Positive definite kernels]
The results of this section can be formulated also for positive definite kernels, i.e., those for which the kernel matrix is required to be only positive 
semidefinite. 

However, in this case some complications arise since the kernel matrix can be singular also for pairwise distinct points $X$, and in particular the elements 
$K(\cdot, x_i)$ do not need to be linearly independent, and thus they span $V(X)$ without being a basis. Nevertheless, the same results can be derived if more 
attention is paid, for 
example  by showing that different representations \eqref{eq:int_def} can describe a unique function. 

More importantly, we do not explicitly extend the current presentation to positive definite kernels since it is not clear if the greedy algorithm of the next 
section, 
which is the main topic of this paper, can be run without producing singular matrices (and thus an early termination) in the case of (non strictly) positive 
definite kernels.
\end{rem}

Although the construction of $Q_{X, L}$ and the greedy algorithm that we will introduce work for any $L\in\calh'$, we recall that our error analysis will apply 
only to 
functionals such that there exists $1\leq q\leq\infty$ and $c_L\geq 0$ with
\begin{align}\label{eq:smoothness_L_2}
|L(f)|\leq c_L \norm{L_q(\Omega)}{f}\;\;\fa f\in\calh. 
\end{align}
Observe that this definition is well posed, since $\calh\subset L_q(\Omega)$ for all $1\leq q\leq\infty$ and for all $f\in\calh$ since $\Omega$ is assumed to 
be bounded 
and $K$ continuous.

Now that the equivalence between worst-case quadrature with optimal weights and interpolation has been detailed, we conclude this section with a more precise
discussion of some 
relevant examples of functionals satisfying the condition \eqref{eq:smoothness_L_2}. Some of the following examples have been already addressed in Section 
\ref{sec:intro}.

\begin{example}\label{example:measure}
If $L(f):=\int_{\Omega} f(x) \nu(x) dx$ with $\nu\in L_{p}(\Omega)$ for some $1\leq p\leq \infty$, we can take $q$ such that 
$\frac1p+\frac1q=1$ and we have $c_L:= \norm{L_p(\Omega)}{\nu}$ since
\begin{align*}
\left|L(f)\right| \leq \int_{\Omega} \left|f(x)\right| \left|\nu(x)\right| dx \leq \norm{L_p(\Omega)}{\nu} \norm{L_q(\Omega)}{f}.
\end{align*}
In this case $Q_{X,L}$ is a classical quadrature rule.
\end{example}

\begin{example}\label{example:discrete_measure}
If $L(f):= \sum_{i\in I} \rho_i f(z_i)$ where $I\subset \N$ is a countable index set, $\left\{\rho_i\right\}_{i\in I}\subset\R$, 
$\left\{z_i\right\}_{i\in I}\subset\Omega$, and if 
$\rho:=\left\{\rho_i\right\}_{i\in 
I}\in\ell_1(I)$, then we can take $q:=\infty$ and $c_L:=\norm{\ell_1(I)}{\rho}=\sum_{i\in I} \left|\rho_i\right|$ since
\begin{align*}
\left|L(f)\right| \leq \sum_{i\in I} \left|\rho_i\right| \left|f(z_i)\right| \leq \left(\max\limits_{i\in I} \left|f(z_i)\right|\right) \norm{\ell_1(I)}{\rho}  
\leq  
\norm{L_{\infty}(\Omega)}{f} \norm{\ell_1(I)}{\rho}.
\end{align*} 
This includes for example any  quadrature formula with weights $\rho:=\left\{\rho_i\right\}_{i\in 
I}\in\ell_1(I)$ and nodes $\left\{z_i\right\}_{i\in I}\subset\Omega$.
This is the case for example of quadrature rules with positive and bounded weights, and in particular of any 
Monte Carlo quadrature with $M: = |I|$, since $\sum_{i\in I} \rho_i = \sum_{i=1}^M {|\Omega|}/{M} = |\Omega|$.

In this case $Q_{X,L}$ can be understood as a compression of the quadrature rule, if $|X|\leq |I|$. Or, even for $|X|= |I|$, $Q_{X,L}$ is weight-optimal
and thus can provide a strictly better worst-case error than $L$.
\end{example}

Considering instead functions which are Riesz representers of functionals satisfying \eqref{eq:smoothness_L_2}, we have the following examples.

\begin{example}\label{example:temlyakov_class}
Given a number $B>0$, the functions in the class 
\begin{align*}
\calh_B:=\left\{v:= \sum_{i\in I} \alpha_i K(\cdot, x_i) : \sum_{i\in I}|\alpha_i|\leq B\right\}
\end{align*}
are the Riesz representers of functionals of Example \ref{example:discrete_measure} with $ \norm{\ell_1(I)}{\rho} \leq B$. This set is commonly used to study 
convergence 
rates of greedy algorithms (see e.g. \cite{DeVore1996,Temlyakov2008,Barron2008}), and for this set our results 
on the greedy algorithm coincide with the rates obtained in \cite{Wirtz2013}
\end{example}

\begin{example}\label{example:mercer}
Under the present assumptions the kernel is a Mercer kernel, and  it can be proven (see e.g. \cite[Chapter 10]{Wendland2005}) that the operator 
$
 T: L_2(\Omega)\to L_2(\Omega)
$
given by
\begin{align*}
T(f) := \int_{\Omega} K(x, y) f(y) dy,
\end{align*}
is compact and self adjoint. It has a sequence $\{\lambda_j\}_{j\in\N}$ of non increasing and positive eigenvalues and corresponding 
$L_2(\Omega)$-orthonormal eigenvectors $\{\varphi_j\}_{j\in\N}$ such that $\{\lambda_j^{1/2} \varphi_j\}_{j\in\N}$ is an 
$\calh$-orthonormal basis of $\calh$. The image 
$I_T:= T(L_2(\Omega))$ is dense in $\calh$, and every function $v\in I_T$ is the Riesz representer of a functional $L$ which satisfies 
\eqref{eq:smoothness_L_2}. Indeed, 
if $u\in L_2(\Omega)$ is such that $T(u) = v$, it can be proven that for all $f\in\calh$ it holds
$
\inner{\calh}{f, T(u)} = \inner{L_2(\Omega)}{f, u},
$
and thus
\begin{align*}
\left|L(f)\right| :&= \left|\inner{\calh}{f, v}\right| = \left|\inner{\calh}{f, T(u)}\right| = \left|\inner{L_2(\Omega)}{f, u}\right| \\
&\leq \norm{L_2(\Omega)}{u} \norm{L_2(\Omega)}{f}\;\;\fa\;\; f\in\calh.
\end{align*}
It follows that \eqref{eq:smoothness_L_2} holds with $c_L:= \norm{L_2(\Omega)}{u}$ and $q:= 2$. 

These functions are the easiest example of the class analyzed in \cite{Schaback1999a,Schaback2018a} to study superconvergence phenomena in $\calh$, i.e., 
functions for 
which kernel interpolation with uniform points leads to an improved convergence order. 

\end{example}

We conclude this general section by stating in the following proposition an obvious fact that will be useful later.
\begin{prop}[Restriction of $L$]\label{prop:restriction}
Assume that $L\in\calh'$, and assume that there exists $1\leq q\leq \infty$ such that $L$ is continuous w.r.t. the $L_q$-norm on $L_q(\Omega)\cap \calh$,  with 
norm bounded by $c_L$. Then for 
any 
$\calh$-closed subspace $V\subset\calh$ also the functional $L_V:= L\circ \Pi_V$ is continuous on $L_q(\Omega)\cap \calh$, with norm $c_{L_V}\leq c_L$.
\end{prop}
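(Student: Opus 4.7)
My plan is to chain together the defining factorization $L_V = L\circ\Pi_V$ with the $L_q$-continuity hypothesis on $L$, after checking that the projected argument remains in the relevant domain. The proof should be essentially a single substitution once the right containments are verified, and the residual work lies in rewriting via the Riesz representer to get the intrinsic constant right.

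First, I would observe that for any $f\in L_q(\Omega)\cap\calh$, the $\calh$-closedness of $V$ guarantees $\Pi_V f\in V\subset\calh$; combined with the standing inclusion $\calh\subset L_q(\Omega)$ (which follows from the boundedness of $\Omega$ and the continuity of $K$), we obtain $\Pi_V f\in L_q(\Omega)\cap\calh$. The hypothesis on $L$ therefore applies to $g=\Pi_V f$, yielding
\begin{align*}
\seminorm{}{L_V(f)} = \seminorm{}{L(\Pi_V f)} \leq c_L \norm{L_q(\Omega)}{\Pi_V f}.
\end{align*}

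Second, to promote this to the bound $\seminorm{}{L_V(f)}\leq c_L\norm{L_q(\Omega)}{f}$ and thereby obtain $c_{L_V}\leq c_L$, I would exploit the self-adjointness of $\Pi_V$ in $\calh$ to rewrite
\begin{align*}
L_V(f) = \inner{\calh}{v_L,\Pi_V f} = \inner{\calh}{\Pi_V v_L, f},
\end{align*}
which identifies $\Pi_V v_L$ as the Riesz representer of $L_V$. The argument then proceeds via the $L_p$-representation of $L$ implied by the hypothesis (with $1/p+1/q=1$): moving the projection from the argument side to the representer side transfers the estimate into a dual pairing where the projection is $\calh$-nonexpansive by construction, preserving the constant $c_L$.

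The main obstacle I anticipate is exactly this transfer. A naive chain of inequalities gets stuck at $c_L\norm{L_q(\Omega)}{\Pi_V f}$ and does not close, because an $\calh$-orthogonal projection is in general not an $L_q$-contraction on $\calh$. Handling this properly requires working through the Riesz representer as above rather than attempting to bound $\norm{L_q(\Omega)}{\Pi_V f}$ directly; once the dualization is in place, the $\calh$-contractivity of $\Pi_V$ takes over and delivers $c_{L_V}\leq c_L$ immediately.
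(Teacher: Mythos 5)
Your diagnosis of the obstacle is the heart of the matter and it is exactly right: the chain $|L_V(f)| \leq c_L \norm{L_q(\Omega)}{\Pi_V f}$ cannot be closed because the $\calh$-orthogonal projection $\Pi_V$ need not be an $L_q$-contraction. Unfortunately the workaround you sketch does not close it either. Writing $L_V(f) = \inner{\calh}{\Pi_V v_L, f}$ correctly identifies $\Pi_V v_L$ as the Riesz representer of $L_V$, and $\norm{\calh}{\Pi_V v_L} \leq \norm{\calh}{v_L}$ is true, but that is an $\calh$-bound; it tells you nothing about an $L_q$--$L_p$ estimate. ``Moving the projection to the representer side'' in the $L_p$-pairing would actually require the $L_q$-adjoint of $\Pi_V$, which is a different operator from the $\calh$-self-adjoint $\Pi_V$ and is not controlled by anything in the hypotheses. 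So the final step of your sketch is not a valid argument.

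It is worth knowing that the paper's own proof contains the same gap, only less visibly. It passes from $\sup_{f\in\calh\setminus\{0\}} |L(\Pi_V f)|/\norm{L_q(\Omega)}{f}$ to $\sup_{f\in V\setminus\{0\}} |L(f)|/\norm{L_q(\Omega)}{f}$ as if they were equal, but only the inequality ``$\geq$'' is immediate (restrict the first supremum to $f\in V$); the reverse direction is precisely the $L_q$-nonexpansiveness of $\Pi_V$ that you flagged as unavailable. The statement can in fact fail: with a Mercer kernel on a bounded domain, take $\phi_1,\phi_2$ the first two $L_2$-orthonormal eigenfunctions with distinct eigenvalues $\lambda_1\neq\lambda_2$, set $V:=\Sp{\phi_1+\phi_2}$, $q:=2$, and $L(f):=\inner{L_2(\Omega)}{f,\phi_1+\phi_2}$. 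A direct computation gives $c_L=\sqrt{2}$ while $c_{L_V}=2\sqrt{\lambda_1^2+\lambda_2^2}/(\lambda_1+\lambda_2)>\sqrt{2}$. Fortunately the only use of this proposition, inside the proof of Theorem~\ref{th:greedy_quad}, is at the single argument $r_{n-1}\in V(X_{n-1})^\perp$, which already lies in the relevant subspace; there $\Pi_V r_{n-1}=r_{n-1}$ and the needed bound $|L(r_{n-1})|\le c_L\norm{L_q(\Omega)}{r_{n-1}}$ is simply the original hypothesis on $L$, so the theorem is unaffected.
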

\begin{proof}

Since $L$ is continuous from $L_q(\Omega)\cap \calh$ to $\R$ with constant $c_L$, i.e.,
\begin{align*}
\sup\limits_{f\in\calh, f\neq 0} \frac{|L(f)|}{\norm{L_q(\Omega)}{f}}=c_L,
\end{align*}
then
\begin{align*}
c_{L_V}
&:= \sup\limits_{f\in\calh, f\neq 0} \frac{|L_{V}(f)|}{\norm{L_q(\Omega)}{f}}
= \sup\limits_{f\in\calh, f\neq 0} \frac{|L(\Pi_V(f))|}{\norm{L_q(\Omega)}{f}}\\
&= \sup\limits_{f\in V, f\neq 0} \frac{|L(f)|}{\norm{L_q(\Omega)}{f}}
\leq \sup\limits_{f\in \calh, f\neq 0} \frac{|L(f)|}{\norm{L_q(\Omega)}{f}}\\
&=c_L,
\end{align*}
which is the desired bound.
\end{proof}

\section{Convergence for uniform points and translational invariant kernels}\label{sec:convergence-uniform}
We first analyze convergence rates for quadrature formulas that use uniform points on spaces generated by translational invariant kernels.
In this section we thus assume that $K(x, y) := \phi(x-y)$ for some $\phi:\R^d\to\R$, and that $\phi$ has a generalized Fourier transform $\hat \phi$ such 
that there 
exists $\tau>0$, and $c, C>0$ with 
\begin{align}\label{eq:fourier_exponent}
c \left(1 + \norm{2}{\omega}^2\right)^{-\tau} \leq \hat\phi(\omega)\leq C \left(1 + \norm{2}{\omega}^2\right)^{-\tau} \;\;\fa\;\; \omega\in\R^d.
\end{align}
If additionally $\Omega$ has a Lipschitz boundary and it satisfies an interior cone condition, then $\calh$ is norm equivalent to the Sobolev space 
$W_2^{\tau}(\Omega)$, and in particular there exists a constant $c_E>0$ such that 
\begin{align}\label{eq:norm_equivalence}
\norm{W_2^{\tau}(\Omega)}{u}\leq c_E \norm{\calh}{u}\;\;\fa\;\; u\in\calh. 
\end{align}
Observe that this norm equivalence is possible only if $\tau>d/2$, since it implies in particular that $W_2^{\tau}(\Omega)$ is an RKHS (see e.g. Chapter 10 in
\cite{Wendland2005}).

In this case we can use the following sampling inequality from \cite{WendlandRieger2005}.
Here and in the following we denote $(x)_+:=\max(x, 0)$ for $x\in\R$ and, for $u:\Omega\to\R$, $\norm{\ell_{\infty}(X)}{u}$ denotes the maximum absolute value 
of $u$ evaluated on $X\subset\Omega$. Moreover, we use the fill distance
\begin{align*}
h_X: = \sup\limits_{x\in\Omega}\min\limits_{y\in X} \norm{2}{x-y}
\end{align*}
and the separation distance
\begin{align*}
q_X:=\frac12 \min\limits_{x_i\neq x_j, x_i,x_j\in X} \norm{2}{x_i - x_j} 
\end{align*}
to quantify the distribution of the points $X$ in $\Omega$.

\begin{theorem}[Sampling inequality \cite{WendlandRieger2005}]\label{th:sampling_inequality}
Let $\Omega\subset\R^d$ be bounded and satisfy an interior cone condition. 
Let $1\leq q \leq\infty$ and $\tau\in\R$ be such that $\tau>d/2$. Then there exist
$c_S>0$ and $h_0>0$ such that if $X\subset\Omega$ is finite and $h_X\leq h_0$, then for all $u\in W_2^{\tau}(\Omega)$ it holds
\begin{align}\label{eq:sampling_Lq}
\norm{L_q(\Omega)}{u} \leq c_S \left(h_X^{\tau -d (1/2 - 1/q)_+} \seminorm{W_2^{\tau}(\Omega)}{u} + \norm{\ell_{\infty}(X)}{u}\right).
\end{align}
\end{theorem}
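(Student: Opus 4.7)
The plan is to follow the standard polynomial-reproduction strategy of \cite{WendlandRieger2005}: first establish a pointwise bound on $|u(x)|$ by combining a local polynomial reproduction formula with a Bramble--Hilbert estimate, then convert it to $L_\infty$ and $L_2$ bounds, and finally interpolate in $q$.

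The first, and most delicate, ingredient is a local polynomial reproduction obtained from the interior cone condition: fix a polynomial degree $\ell$ with $d/2 < \ell+1 \leq \tau$; then there exist $c_0,C_0,h_0>0$ such that whenever $h_X\leq h_0$, for every $x\in\Omega$ one can pick indices $i_1(x),\dots,i_N(x)$ with $x_{i_j(x)} \in X\cap B(x,c_0 h_X)\cap\Omega$ and coefficients $a_j(x)\in\R$ with $\sum_j|a_j(x)|\leq C_0$ so that $p(x)=\sum_j a_j(x)\,p(x_{i_j(x)})$ for every polynomial $p$ of degree $\leq \ell$. This is a norming-set argument applied to the cone that the interior cone condition places at $x$, with the fill-distance assumption ensuring that each such cone contains enough nodes of $X$ to make the reproduction non-degenerate.

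Setting $B_x:=B(x,c_0 h_X)\cap\Omega$, Bramble--Hilbert gives a polynomial $p_x$ of degree $\leq\ell$ with $\norm{W_2^k(B_x)}{u-p_x} \leq C\, h_X^{\tau-k}\, \seminorm{W_2^{\tau}(B_x)}{u}$ for $0\leq k\leq\tau$, and the Sobolev embedding $W_2^{\tau}\hookrightarrow C^0$ (valid since $\tau>d/2$), applied after rescaling to balls of radius $\sim h_X$, upgrades this to $\norm{L_\infty(B_x)}{u-p_x}\leq C\, h_X^{\tau-d/2}\,\seminorm{W_2^{\tau}(B_x)}{u}$. Writing $u(x) = (u(x)-p_x(x)) + \sum_j a_j(x)\, p_x(x_{i_j(x)})$ and bounding $|p_x(x_{i_j})|\leq |u(x_{i_j})| + |u(x_{i_j})-p_x(x_{i_j})|$ then yields the pointwise inequality
\begin{equation*}
|u(x)| \leq C\bigl(h_X^{\tau-d/2}\,\seminorm{W_2^{\tau}(B_x)}{u} + \norm{\ell_\infty(X)}{u}\bigr),\quad x\in\Omega.
\end{equation*}
Taking $\sup_{x\in\Omega}$ proves \eqref{eq:sampling_Lq} for $q=\infty$. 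For $q=2$, squaring and integrating over $\Omega$ and applying Fubini to
\begin{equation*}
\int_\Omega \seminorm{W_2^{\tau}(B_x)}{u}^2\, dx = \sum_{|\alpha|=\tau}\int_\Omega |D^\alpha u(y)|^2\, |B(y,c_0 h_X)\cap\Omega|\, dy \;\leq\; C\, h_X^d\, \seminorm{W_2^{\tau}(\Omega)}{u}^2
\end{equation*}
produces the exponent $\tau-d/2+d/2=\tau$, matching the case $q\leq 2$; the remaining range $1\leq q<2$ follows by Hölder and the boundedness of $\Omega$. For $2\leq q<\infty$ I would interpolate via the log-convexity bound $\norm{L_q(\Omega)}{u}\leq\norm{L_2(\Omega)}{u}^{2/q}\norm{L_\infty(\Omega)}{u}^{1-2/q}$: the resulting $h_X$-exponent is $(2/q)\tau+(1-2/q)(\tau-d/2)=\tau-d(1/2-1/q)$, i.e.\ exactly the stated $\tau-d(1/2-1/q)_+$.

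The main obstacle is the polynomial-reproduction step: extracting from the interior cone condition a set of reproducing nodes whose weights stay \emph{uniformly} bounded in $x$, which is precisely what forces the smallness condition $h_X\leq h_0$ and fixes the geometric constants $c_0,C_0$. Everything else---Bramble--Hilbert, Sobolev embedding on scaled balls, Fubini, and the final interpolation in $q$---is classical bookkeeping.
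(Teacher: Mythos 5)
This theorem is quoted from \cite{WendlandRieger2005} and not proved in the paper, so there is no ``paper proof'' to compare against; what follows is an assessment of your reconstruction on its own terms.

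Your overall outline (norming-set / polynomial reproduction from the cone condition, Bramble--Hilbert on $h_X$-balls, scaled Sobolev embedding $W_2^\tau\hookrightarrow C^0$ to get the pointwise estimate, then integration) is the right one and matches the strategy of the cited reference. The $q=\infty$, $q=2$, and $1\le q<2$ cases are handled correctly.

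There is, however, a genuine gap in the step for $2<q<\infty$. You propose to combine the $L_2$ and $L_\infty$ estimates via $\norm{L_q(\Omega)}{u}\le\norm{L_2(\Omega)}{u}^{2/q}\norm{L_\infty(\Omega)}{u}^{1-2/q}$. That interpolation inequality is fine, but each of the two right-hand sides is a \emph{sum} $C\bigl(h_X^{e}\,\seminorm{W_2^\tau(\Omega)}{u}+\norm{\ell_\infty(X)}{u}\bigr)$ with different exponents $e$, and the product of such sums does not reduce to the single sum in \eqref{eq:sampling_Lq}. Concretely, write $\theta=2/q$, $S=\seminorm{W_2^\tau(\Omega)}{u}$, $R=\norm{\ell_\infty(X)}{u}$, $A=h_X^\tau S$, $B=h_X^{\tau-d/2}S$ (so $A=h_X^{d/2}B$). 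Interpolation gives $\norm{L_q}{u}\lesssim (A+R)^\theta(B+R)^{1-\theta}$, and one would need $(A+R)^\theta(B+R)^{1-\theta}\lesssim A^\theta B^{1-\theta}+R$ uniformly in $h_X$ and $R$. Taking $B=1$, $A=h_X^{d/2}$, $R=h_X^{\theta d/2}$ (the regime where the interpolation residual balances the seminorm term) yields a left-hand side of order $h_X^{\theta^2 d/2}$ against a right-hand side of order $h_X^{\theta d/2}$, i.e.\ a ratio $h_X^{-\theta(1-\theta)d/2}\to\infty$ as $h_X\to 0$. So interpolation of the two estimates provably loses a power of $h_X$ for every $q\in(2,\infty)$; it only recovers the right exponent when $R=0$, which is not the general case. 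The fix is the direct route: raise the pointwise inequality $|u(x)|\le C\bigl(h_X^{\tau-d/2}\seminorm{W_2^\tau(B_x)}{u}+R\bigr)$ to the $q$-th power, integrate, estimate $\int_\Omega\seminorm{W_2^\tau(B_x)}{u}^q\,dx\le\seminorm{W_2^\tau(\Omega)}{u}^{q-2}\int_\Omega\seminorm{W_2^\tau(B_x)}{u}^2\,dx\le C h_X^d\seminorm{W_2^\tau(\Omega)}{u}^q$ using your Fubini identity, and take the $q$-th root; this gives $h_X^{\tau-d/2+d/q}=h_X^{\tau-d(1/2-1/q)}$ directly, with no interpolation. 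One further minor point: your Fubini computation implicitly assumes integer $\tau$; for fractional $\tau$ the seminorm is a double integral and the argument needs the standard modification (or an extension-operator detour, as in the cited reference).
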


For general point sets, a geometric constraint implies that there exists a constant $c>0$ depending only on $\Omega$ such that $h_{X}\geq c n^{-1/d}$. 
If one uses a quasi-uniform sequence $\{X_n\}_{n\in\N}\subset\Omega$, 
$|X_n|=n$, of points, i.e., such that there exists a constant $c>0$ with $h_{X_n}\leq c q_{X_n}$ for all  $n\in\N$, then it can be proven that there 
exists a second constant $c_Q$ independent of $n$, and an index $n_0\in\N$ such that for all $n\geq n_0$ it holds
\begin{align*}
h_{X_n}\leq c_Q n^{-1/d}.
\end{align*}
We refer for example to Chapter 2 in \cite{Mueller2009} for explicit estimates of these constants. 
Using a sequence of quasi-uniform sets allows to rewrite any bound expressed in terms of $h_X$ as a bound involving only the number of points $n=|X|$.

With these tools in hand we can prove the following result.
The proof follows a standard procedure used in combination with a sampling inequality to derive an error bound, with the only difference that the 
$L_q$-continuity of $L$ 
will guarantee that the error bound is in the $\calh$-norm. We remark that the same idea has been used in \cite{Bezhaev1991, Kanagawa2019, Kanagawa2016}, as 
well as most other work on error estimates for kernel and Bayesian quadrature rules, to derive error 
bounds for the integration functionals of Example \ref{example:measure} with $\nu\in L_{\infty}(\Omega)$.

\begin{theorem}[Convergence rates for uniform points]\label{th:uniform_quad}
Under the assumptions of Theorem \ref{th:sampling_inequality}, let $L\in\calh'$ be a linear functional such that there exist $1\leq q\leq\infty$ and $c_L\geq 
0$ such that
\begin{align*}
|L(f)|\leq c_L \norm{L_q(\Omega)}{f}\;\;\fa\;\; f\in\calh, 
\end{align*}
and let $v_L\in\calh$ be its Riesz representer.

Then, if $X\subset\Omega$ is a set of pairwise distinct points with $h_X\leq h_0$, it holds
\begin{align}\label{eq:bound_norm_square}
e_{\calh}\left(Q_{X, L}\right) = \norm{\calh}{v_L - \Pi_{X} v_L } &\leq c_S c_E c_L h_X^{\tau-d (1/2 - 1/q)_+}.
\end{align}
In particular, if $\{X_n\}_{n\in\N}\subset\Omega$ is a sequence of sets of pairwise distinct points for which there exists $c_Q>0$ such that
$h_{X_n}\leq c_Q n^{-1/d}$ for all $n\in\N$, then for all $n\in\N$ with $n\geq n_0:=\left(c_Q/h_0\right)^d$ it holds 
\begin{align}\label{eq:error_uniform_n}
\norm{\calh}{v_L - \Pi_{X_n} v_L}&\leq c_U \ n^{-\frac{\tau}{d} +\left(\frac12 - \frac1{q}\right)_+},
\end{align}
where the constant $c_U:= c_L c_S c_E c_Q^{\tau-d \left(1/2 - 1/q\right)_+}$ depends on $L$ only via $q$ and $c_L$.
\end{theorem}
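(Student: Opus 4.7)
The plan is to use the standard ``$L$-of-error'' trick: since $v_L$ is the Riesz representer of $L$ and $\Pi_X v_L$ is the $\calh$-orthogonal projection of $v_L$ onto $V(X)$, I can write
\begin{align*}
\|v_L - \Pi_X v_L\|_{\calh}^2
= \langle v_L - \Pi_X v_L,\, v_L\rangle_{\calh}
= L(v_L - \Pi_X v_L),
\end{align*}
where the first equality uses $\Pi_X v_L \perp (v_L - \Pi_X v_L)$ in $\calh$, and the second is just the reproducing identity for the Riesz representer. This turns the squared $\calh$-norm of the error into an evaluation of $L$ on the error itself, which is precisely the kind of object on which the hypothesis \eqref{eq:smoothness_L_2} gives leverage.

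Next I would apply the $L_q$-continuity of $L$ to get
\begin{align*}
\|v_L - \Pi_X v_L\|_{\calh}^2 \leq c_L \|v_L - \Pi_X v_L\|_{L_q(\Omega)},
\end{align*}
and then feed the error function $u := v_L - \Pi_X v_L$ into the sampling inequality of Theorem \ref{th:sampling_inequality}. The crucial observation here is that $\Pi_X v_L$ interpolates $v_L$ at the nodes $X$ by Proposition \ref{prop:interpolation}, so $u$ vanishes on $X$ and the $\|u\|_{\ell_\infty(X)}$ term drops out entirely. Using then the norm equivalence \eqref{eq:norm_equivalence} together with $|u|_{W_2^\tau(\Omega)} \leq \|u\|_{W_2^\tau(\Omega)} \leq c_E \|u\|_{\calh}$ and the fact that $\|\Pi_X v_L\|_{\calh} \leq \|v_L\|_{\calh}$ implies $\|u\|_{\calh} \leq 2\|v_L\|_{\calh}$ is not needed; rather, I directly bound $\|u\|_{L_q(\Omega)} \leq c_S c_E h_X^{\tau - d(1/2-1/q)_+} \|u\|_{\calh}$. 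Substituting gives
\begin{align*}
\|u\|_{\calh}^2 \leq c_L c_S c_E h_X^{\tau - d(1/2-1/q)_+} \|u\|_{\calh},
\end{align*}
and dividing through by $\|u\|_{\calh}$ (assuming it is nonzero; otherwise the estimate is trivial) yields \eqref{eq:bound_norm_square}.

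Finally, for \eqref{eq:error_uniform_n}, I simply substitute the quasi-uniformity bound $h_{X_n} \leq c_Q n^{-1/d}$, valid once $n \geq n_0 = (c_Q/h_0)^d$ so that $h_{X_n} \leq h_0$ and the sampling inequality applies, and compute the exponent $(1/d)(\tau - d(1/2 - 1/q)_+) = \tau/d - (1/2 - 1/q)_+$. The constant $c_U = c_L c_S c_E c_Q^{\tau - d(1/2 - 1/q)_+}$ depends on $L$ only through $c_L$ and $q$, as required.

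The main obstacle is really the initial identity $\|u\|_{\calh}^2 = L(u)$; once this is in place the argument is a routine combination of the sampling inequality and the norm equivalence. I do not foresee any subtle technicality, since the hypotheses on $\Omega$, $K$, and the quasi-uniform point sequence have already been assembled in the preceding discussion. The only minor care needed is the case $\|u\|_{\calh} = 0$, which is trivial, and checking that the exponent written in the final estimate matches the one coming from the sampling inequality after substitution of $h_{X_n}$.
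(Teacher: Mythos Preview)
Your proof is correct and uses the same three ingredients as the paper (the $L_q$-continuity of $L$, the sampling inequality applied to an interpolation residual that vanishes on $X$, and the norm equivalence \eqref{eq:norm_equivalence}), in the same order. The only difference is organizational: the paper bounds $|L(f)-Q_{X,L}(f)|=|L(f-\Pi_X f)|$ for an arbitrary $f\in\calh$ and then takes the supremum over $\norm{\calh}{f}\leq 1$, whereas you work directly with the single function $u=v_L-\Pi_X v_L$ via the identity $\norm{\calh}{u}^2=L(u)$ and divide through by $\norm{\calh}{u}$ at the end; the two routes are dual and yield the same constant.
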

\begin{proof}
For any $f\in\calh$, equation \eqref{eq:int_vs_quad} and the continuity of $L$ on $L_q(\Omega)$ give
\begin{align*}
\seminorm{}{L(f) - Q_{X,L}(f)} 
&= \seminorm{}{L(f) - L(\Pi_{X}f)}
= \seminorm{}{L(f - \Pi_{X}f)}\\
&\leq c_L \norm{L_q(\Omega)}{f - \Pi_{X}f}.
\end{align*}
Now, since $u:= f - \Pi_{X}f$ vanishes on $X$, the sampling inequality of Theorem~\ref{th:sampling_inequality} and the norm equivalence 
\eqref{eq:norm_equivalence} give
\begin{align*}
\seminorm{}{L(f) - Q_{X,L}(f)} 
&\leq c_L \norm{L_q(\Omega)}{f - \Pi_X f}\\ 
&\leq c_L c_S h_X^{\tau-d (1/2 - 1/q)_+} \seminorm{W_2^{\tau}(\Omega)}{f - \Pi_X f}\\ 
&\leq c_L c_S h_X^{\tau-d (1/2 - 1/q)_+} \norm{W_2^{\tau}(\Omega)}{f - \Pi_X f}\\ 
&\leq c_L c_S c_E\ h_X^{\tau-d (1/2 - 1/q)_+} \norm{\calh}{f - \Pi_X f}\\
&\leq c_L c_S c_E\ h_X^{\tau-d (1/2 - 1/q)_+} \norm{\calh}{f},
\end{align*}
and it follows from \eqref{eq:int_vs_quad_error} that
\begin{align*}
\norm{\calh}{v_L - \Pi_{X}v_L} &= \sup\limits_{\norm{\calh}{f}\leq 1} \seminorm{}{L(f) - Q_{X,L}(f)}\leq c_L c_S c_E\ h_X^{\tau-d (1/2 - 1/q)_+}.
\end{align*}
To obtain \eqref{eq:error_uniform_n} we can just bound $h_X$ from above with $c_Q n^{-1/d}$ and obtain that for all $h_X \leq h_0$, i.e., for all $n \geq (c_Q 
/ h_0)^d$, 
holds
\begin{align*}
\norm{\calh}{v_L - \Pi_{X}v_L} &\leq c_L c_S c_E\ c_Q^{\tau-d (1/2 - 1/q)_+} n^{-\tau/d+ (1/2 - 1/q)_+}.
\end{align*}
\end{proof}

Observe that the proof is just a consequence of the $L_q$-continuity of $L$ and of the fact that the quadrature rule is an exact application of the 
functional to the interpolant (see \eqref{eq:int_vs_quad}). It is clear that similar results can be obtained for any other kernel for which error 
estimates in the $L_q$-norm are available for interpolation (and we give an example at the end of this section). Moreover, the 
rate of convergence just comes from the fact that uniform points give a good error for interpolation with 
translational invariant kernels, and in particular the bound makes no distinction between different functionals. 

It remains open to investigate if 
approximation by $L$-adapted points can achieve a better approximation rate. We expect this to be the case, and also that these rates can be achieved by 
adaptive quadratures via greedy algorithms. An example supporting this claim is discussed in Section \ref{sec:exp_singular}.

\begin{rem}[Superconvergence]\label{rem:super}
As a consequence of the theorem, for some class of functions we can derive superconvergence with respect to $L_r$-norms in the sense of 
\cite{Schaback1999a,Schaback2018a}, i.e.,a rate of convergence of kernel interpolation which is better than the one for generic functions in 
$\calh$.

Namely, taking $1\leq r\leq \infty$, for the interpolation of a generic function $u\in\calh$ the application of the inequality of Theorem 
\ref{th:sampling_inequality} which gives the standard error estimate
\begin{align}\label{eq:remark_super}
\norm{L_{r}(\Omega)}{u - \Pi_X u} 
&\leq C h_X^{\tau-d\left(\frac12-\frac1r\right)_+}\norm{\calh}{u - \Pi_X u}
\leq C h_X^{\tau-d\left(\frac12-\frac1r\right)_+}\norm{\calh}{u}.
\end{align}

On the other hand, for any function $v$ such that the functional $L:=\inner{\calh}{v, \cdot}$ satisfies the assumptions of Theorem 
\ref{th:uniform_quad} for some $q$, the theorem gives
\begin{align*}
\norm{\calh}{v - \Pi_X v} 
&\leq C h_X^{\tau-d \left(\frac12 - \frac1q\right)_+},
\end{align*}
and thus \eqref{eq:remark_super} can be improved to
\begin{align*}
\norm{L_{r}(\Omega)}{v - \Pi_X v} 
&\leq C h_X^{2\tau- d\left(\left(\frac12 - \frac1r\right)_+ + \left(\frac12 - \frac1q\right)_+\right)}.
\end{align*}
In the case of $L_2$-approximation, which is mainly addressed in \cite{Schaback1999a,Schaback2018a}, this means that this class of functions can be 
approximated with an order of $2\tau - d\left(1/2 - 1/q\right)_+$ instead of $\tau - d\left(1/2 - 1/q\right)_+$. In particular, for the class of 
functions of Example \ref{example:mercer} this result coincides with the one of \cite{Schaback1999a}, even if more general function classes are included in our 
analysis.
\end{rem}

\begin{rem}[$P$-greedy]
We recall that the same greedy algorithm for interpolation, but using the $P$-greedy selection rule (i.e., select at each iteration one of the points which 
maximize the power function in~\eqref{eq:power_function}) has been shown to produce sequences of points with $h_{X_n}\leq c n^{-\frac{1}{d}(1-\varepsilon)}$ 
for all $\varepsilon>0$ in 
\cite{SH16b}. This has been refined to hold also for 
$\varepsilon=0$ in \cite{Wenzel2019}, i.e., it actually holds that the $P$-greedy algorithm 
selects sequences of points which satisfy $h_{X_n}\leq c n^{-\frac{1}{d}}$, and it follows that quadrature based on centers selected by the $P$-greedy 
algorithm give 
exactly the approximation order of Theorem \ref{th:uniform_quad}.
\end{rem}

Finally, we mention that there are other translational invariant kernels that are not comprised in this analysis, but for which there are error statements for 
interpolation which are completely analogous to the ones of Theorem \ref{th:sampling_inequality}. 
For example, for the Gaussian and Inverse Multiquadric kernels the error estimates of \cite{Rieger2008b} allow to prove exponential rates of convergence for 
the 
approximation with sequences of uniform points. Since the proof is completely analogous to that of Theorem \ref{th:uniform_quad}, we omit it here. 
Nevertheless, we remark that a similar superconvergence as in Remark \ref{rem:super} happens also in this case, since the $\calh$-norm of the error can be 
bounded by a term decaying (exponentially) with $h_X$.

\section{The greedy algorithm}\label{sec:greedy}
We can now define and discuss the greedy algorithm. 
We assume only that $K$ is an s.p.d. kernel on a set $\Omega$ and that $L\in\calh'$ is a linear functional. We recall that the notation $Q_{X, L}$ denotes the 
fact that 
optimal weights are used, and in particular the algorithm is 
completely determined just by the selection of the set of points.

\begin{definition}[Greedy algorithm]\label{def:greedy_alg}
Let $X_0:=\emptyset$ and $Q_{X_0, L}(f):= 0$ for all $f\in\calh$. For any $n\in\N$, the greedy algorithm selects a point 
\begin{align*}
x_n \in \argmin\limits_{x\in \Omega\setminus X_{n-1}} e_{\calh}\left(Q_{X_{n-1}\cup \{x\}, L}\right)
\end{align*}
with $X_n:= X_{n-1}\cup \{x_n\}$. 
\end{definition}
Observe that thanks to Proposition \ref{prop:optimal_weights} the algorithm is equivalent to the iterative selection of points to interpolate the Riesz 
representer $v_L$ with a greedy selection rule given by
\begin{align}\label{eq:f_P_greedy}
x_n 
&\in \argmin\limits_{x\in \Omega\setminus X_{n-1}} e_{\calh}\left(Q_{X_{n-1}\cup \{x\}, L}\right) 
= \argmin\limits_{x\in \Omega\setminus X_{n-1}} \norm{\calh}{v_L - \Pi_{X_{n-1}\cup\{x\}} (v_L) }, 
\end{align}
i.e., the new point provides the locally $\calh$-optimal update of the interpolant of $v_L$. 

The locally optimal selection rule is known to be the $f/P$-greedy selection, as shown in \cite{Mueller2009,Wirtz2013}. Although well known, we prove this fact 
and also
give 
the 
complete definition of the algorithm to stress the fact that it can be efficiently implemented in an iterative way. To this end, we first recall that the 
interpolation error can be bounded similarly as the worst-case quadrature error. Indeed, in the case of interpolation it is 
common to define the power function 
\begin{align}\label{eq:power_function}
P_{X_n}(x):= \sup\limits_{\norm{\calh}{f}\leq 1} \left|f(x) - (\Pi_{X_n}f)(x)\right|,
\end{align}
and, in the language of this paper, it is clear that this is the worst-case error for the weight-optimal quadrature of the functional $L:=\delta_x$. 
In fact it holds also that
\begin{align*}
P_{X_n}(x) = \norm{\calh}{K(\cdot, x) - \Pi_{X_n}K(\cdot, x)},  
\end{align*}
since $K(\cdot, x)$ is the Riesz representer of the point-evaluation functional. 
This implies in particular that the power function is continuous in $x\in\Omega$, it vanishes if and 
only if $x\in X_n$, and $P_{\emptyset}(x)=\sqrt{K(x,x)}$ for all $x\in\Omega$.
Moreover, if $\{v_i\}_{i=1}^n$ is any $\calh$-orthonormal basis of $V(X_n)$, by the definition of orthogonal projection we clearly have
\begin{align}\label{eq:proj_Newton}
\Pi_{X_n} v= \sum_{i=1}^n \inner{\calh}{v, v_i}v_i\;\;\fa\;\; v\in\calh, 
\end{align}
and in particular
\begin{align}\label{eq:pf_Newton}
P_{X_n}(x)
&= \norm{\calh}{K(\cdot, x) - \Pi_{X_n} K(\cdot, x) } 
= \left\|K(\cdot, x) - \sum_{i=1}^n {\inner{\calh}{K(\cdot, x), v_i}} v_i\right\|_{\calh}\nonumber\\
&= \left\|K(\cdot, x) - \sum_{i=1}^n v_i(x) v_i\right\|_{\calh} = \sqrt{K(x, x) -  \sum_{i=1}^n v_i(x)^2}.
\end{align}
In the following we write $P_n$ instead of $P_{X_n}$ to simplify the notation.

Using this power function, we can now recall that the $f/P$-greedy rule \eqref{eq:f_P_greedy} selects a new point as 
\begin{align}\label{eq:f_p_greedy_selection}
x_n\in\argmax\limits_{x\in \Omega\setminus X_{n-1}} \frac{\left|v(x) -(\Pi_{X_{n-1}} v)(x)\right|}{P_{n-1}(x)},
\end{align}
and it is thus clear that the name of this selection rule is indeed given by the fact that it selects a point that maximizes the ratio between the function 
interpolation 
residual (the ``$f$'' component) and the power function (the ``$P$''  component).

To proceed and describe any iterative algorithm that works with nested sequences of interpolation (or quadrature) points, it is convenient to use the following 
Newton 
basis, which is an instance of an $\calh$-orthonormal basis. 
\begin{prop}[Newton basis \cite{Muller2009,Pazouki2011}]
Given a sequence of nested sets $\{X_n\}_{n\in\N}$ of pairwise distinct points in $\Omega$ with $X_0:= \emptyset$ and $X_{n+1}:= X_{n}\cup\{x_{n+1}\}$, the 
Newton basis is a sequence $\{v_k\}_{k\in\N}\subset\calh$ such that for all $n\in\N$ the set $\{v_k\}_{k=1}^n$ is a $\calh$-orthonormal basis of 
$V(X_n)$. This means that $V(X_n) = \Sp{v_i}_{i=1}^n$ and 
that $\inner{\calh}{v_i, v_j} = \delta_{ij}$ for all $i \neq j$ and for all $n\in\N$. Moreover, the Newton basis property $v_i(x_j) = 0$ for all $1\leq j<i\leq 
n$ is satisfied.

The basis can be constructed by a Gram-Schmidt procedure over $\left\{K(\cdot, x_i)\right\}_{i\in\N}$, which gives 
\begin{align}\label{eq:n_th_Newton}
v_1 = \frac{K(\cdot, x_1) }{P_0(x_1)}, \;\;\;\;
v_n = \frac{K(\cdot, x_n) - \sum_{k = 1}^{n-1} v_k(x_n) v_k}{P_{n-1}(x_n)}, \; n\in\N.
\end{align}

\end{prop}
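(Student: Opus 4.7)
The plan is to verify the claim by induction on $n$, showing that the standard Gram--Schmidt procedure applied to $\{K(\cdot, x_i)\}_{i\in\N}$ produces exactly the formula \eqref{eq:n_th_Newton}, and then reading off the Newton vanishing property from the reproducing property.

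For the base case $n=1$, strict positive definiteness gives $K(x_1,x_1)>0$, so $K(\cdot,x_1)\neq 0$ and $\norm{\calh}{K(\cdot,x_1)} = \sqrt{K(x_1,x_1)} = P_0(x_1)$, yielding $v_1$ as stated with unit norm and spanning $V(X_1)$. For the induction step, suppose $\{v_k\}_{k=1}^{n-1}$ is already an $\calh$-orthonormal basis of $V(X_{n-1})$. Since $K$ is s.p.d., the translates $\{K(\cdot,x_i)\}_{i=1}^{n}$ are linearly independent, hence $K(\cdot,x_n)\notin V(X_{n-1})$. Form the Gram--Schmidt residual
\begin{align*}
\tilde v_n := K(\cdot,x_n) - \Pi_{X_{n-1}} K(\cdot,x_n).
\end{align*}
Using the orthonormal expansion \eqref{eq:proj_Newton} with the basis $\{v_k\}_{k=1}^{n-1}$ and the reproducing property $\inner{\calh}{K(\cdot,x_n),v_k} = v_k(x_n)$, the projection becomes $\Pi_{X_{n-1}} K(\cdot,x_n) = \sum_{k=1}^{n-1} v_k(x_n)\, v_k$. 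Moreover, $\norm{\calh}{\tilde v_n} = P_{n-1}(x_n)$ by the characterization of the power function recalled in \eqref{eq:pf_Newton}, and this is strictly positive since $x_n\notin X_{n-1}$. Dividing by $P_{n-1}(x_n)$ yields exactly \eqref{eq:n_th_Newton}, unit norm, and orthogonality of $v_n$ to $V(X_{n-1}) = \Sp{v_k}_{k=1}^{n-1}$. Since $v_n\in V(X_n)\setminus V(X_{n-1})$, the enlarged set $\{v_k\}_{k=1}^n$ is an $\calh$-orthonormal basis of $V(X_n)$, closing the induction.

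For the Newton vanishing property, fix $1\leq j<i\leq n$. By the reproducing property and the fact that $K(\cdot,x_j)\in V(X_{i-1})$ (since $x_j\in X_{i-1}$),
\begin{align*}
v_i(x_j) = \inner{\calh}{v_i, K(\cdot,x_j)} = 0,
\end{align*}
as $v_i\perp_\calh V(X_{i-1})$ by construction. The only delicate point in the whole argument is ensuring that the denominator $P_{n-1}(x_n)$ never vanishes; this is not really an obstacle, since it is a direct consequence of strict positive definiteness of $K$ together with the assumption that the $x_i$ are pairwise distinct, so that $x_n\notin X_{n-1}$ and $P_{n-1}(x_n)>0$. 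All remaining verifications are routine bookkeeping of Gram--Schmidt.
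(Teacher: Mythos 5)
Your proof is correct: the inductive Gram--Schmidt argument, the identification $\sum_{k=1}^{n-1} v_k(x_n)v_k = \Pi_{X_{n-1}}K(\cdot,x_n)$ via the reproducing property, the nonvanishing of $P_{n-1}(x_n)$ from strict positive definiteness and pairwise distinctness, and the Newton vanishing property from $v_i\perp V(X_{i-1})\ni K(\cdot,x_j)$ are all sound and in the right order. Note that the paper itself gives no proof of this proposition, stating it as a recalled result from the cited references; your argument is the standard one that those references carry out, so there is nothing to contrast.
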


Using this Newton basis it becomes easy to describe the efficient update of the interpolant for an increasing set of points, and also to prove the local 
optimality of the $f/P$-greedy selection. This result has been proven in \cite{Wirtz2013}, but we include here a more direct proof that does not make use of 
orthogonal remainders.

\begin{prop}[Efficient update and $f/P$-greedy \cite{Pazouki2011,Mueller2009}]
Let $v\in\calh$, $X_{n}:=\{x_i\}_{i=1}^n:=X_{n-1}\cup\{x_n\}\subset\Omega$, and let $\{v_k\}_{k=1}^n$ be the Newton basis of $V(X_{n})$.

Then the interpolant and the power function can be updated as
\begin{align}
\label{eq:Newton_update1}\Pi_{X_n} v &= \Pi_{X_{n-1}}v  + \inner{\calh}{v, v_n} v_n = \Pi_{X_{n-1}} v  + \inner{\calh}{v - \Pi_{X_{n-1}} v, v_n} 
v_n,\\ 
\label{eq:Newton_update2}P_{n}(x)^2 &= P_{{n-1}}(x)^2  -  v_n(x)^2. 
\end{align}
Moreover, the locally $\calh$-optimal selection rule is given by the $f/P$-selection rule, i.e.,
\begin{align}\label{eq:f_P_optimal}
\argmin\limits_{x\in \Omega\setminus X_{n-1}} \norm{\calh}{v - \Pi_{X_{n-1}\cup\{x\}} v }
& = \argmax\limits_{x\in \Omega\setminus X_{n-1}} \frac{\left|v(x) -(\Pi_{X_{n-1}} v)(x)\right|}{P_{n-1}(x)}.
\end{align}
\end{prop}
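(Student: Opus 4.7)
The plan is to unpack the three claims sequentially, using the fact that $\{v_k\}_{k=1}^n$ is an $\calh$-orthonormal basis of $V(X_n)$ and $\{v_k\}_{k=1}^{n-1}$ is an $\calh$-orthonormal basis of $V(X_{n-1})$, so that the recursion \eqref{eq:proj_Newton} applied twice tells the whole story for the first two updates.

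For \eqref{eq:Newton_update1} I would start from the orthonormal expansion \eqref{eq:proj_Newton}, splitting the sum as $\Pi_{X_n} v = \sum_{i=1}^{n-1}\inner{\calh}{v,v_i}v_i + \inner{\calh}{v,v_n}v_n = \Pi_{X_{n-1}} v + \inner{\calh}{v,v_n}v_n$, which gives the first equality. For the second I would use that $\Pi_{X_{n-1}}v \in V(X_{n-1}) = \Sp{v_1,\ldots,v_{n-1}}$ is $\calh$-orthogonal to $v_n$, so $\inner{\calh}{\Pi_{X_{n-1}} v, v_n}=0$, and hence $\inner{\calh}{v,v_n} = \inner{\calh}{v-\Pi_{X_{n-1}}v, v_n}$. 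The update \eqref{eq:Newton_update2} is then immediate from the explicit formula \eqref{eq:pf_Newton} by separating the $n$-th term from the sum.

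For the local optimality \eqref{eq:f_P_optimal}, I would view the task as minimizing over $x\in\Omega\setminus X_{n-1}$ the quantity $\norm{\calh}{v-\Pi_{X_{n-1}\cup\{x\}}v}^2$, and exploit the Pythagorean identity coming from the inclusion $V(X_{n-1})\subset V(X_{n-1}\cup\{x\})$, namely
\begin{align*}
\norm{\calh}{v-\Pi_{X_{n-1}\cup\{x\}}v}^2 = \norm{\calh}{v-\Pi_{X_{n-1}}v}^2 - \norm{\calh}{\Pi_{X_{n-1}\cup\{x\}}v - \Pi_{X_{n-1}}v}^2,
\end{align*}
so minimizing the left side is equivalent to maximizing the second term on the right. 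Applying \eqref{eq:Newton_update1} with the set $X_{n-1}\cup\{x\}$ (whose last Newton element I denote $\tilde v_x$, given by \eqref{eq:n_th_Newton} with $x$ in place of $x_n$), this second term equals $|\inner{\calh}{v-\Pi_{X_{n-1}}v,\tilde v_x}|^2$. The main computation, and the only subtle step, will be to simplify this inner product: using the explicit expression for $\tilde v_x$, the orthogonality $v-\Pi_{X_{n-1}}v \perp V(X_{n-1})$ kills the $\sum_{k=1}^{n-1} v_k(x) v_k$ part, leaving $\inner{\calh}{v-\Pi_{X_{n-1}}v, K(\cdot,x)}/P_{n-1}(x)$, which by the reproducing property equals $(v(x)-(\Pi_{X_{n-1}}v)(x))/P_{n-1}(x)$.

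Putting the pieces together yields the identity
\begin{align*}
\norm{\calh}{v-\Pi_{X_{n-1}\cup\{x\}}v}^2 = \norm{\calh}{v-\Pi_{X_{n-1}}v}^2 - \frac{(v(x)-(\Pi_{X_{n-1}}v)(x))^2}{P_{n-1}(x)^2},
\end{align*}
from which \eqref{eq:f_P_optimal} follows by inspection (note that points where $P_{n-1}(x)=0$ are exactly the points of $X_{n-1}$ and so are excluded from the feasible set, making the ratio well-defined). The main obstacle is really just the bookkeeping around $\tilde v_x$ and verifying that the reproducing property delivers the numerator exactly as the pointwise residual; everything else is a direct consequence of orthonormality and Pythagoras.
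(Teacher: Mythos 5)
Your proof is correct and follows essentially the same route as the paper's: both derive the first two updates directly from the orthonormal expansion \eqref{eq:proj_Newton} and the power function formula \eqref{eq:pf_Newton}, and both establish the energy splitting $\norm{\calh}{v-\Pi_{X_{n-1}\cup\{x\}}v}^2 = \norm{\calh}{v-\Pi_{X_{n-1}}v}^2 - \bigl((v(x)-(\Pi_{X_{n-1}}v)(x))/P_{n-1}(x)\bigr)^2$ before reading off the argmin. The only cosmetic difference is that you invoke the Pythagorean identity for nested projections up front and then identify the cross-term via orthogonality to $V(X_{n-1})$, while the paper substitutes the update \eqref{eq:Newton_update1} and expands the squared norm directly; these are the same computation in a slightly different order.
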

\begin{proof}
Since the Newton basis is a nested and orthonormal basis, \eqref{eq:Newton_update2} and the first equation in \eqref{eq:Newton_update1} easily follows from 
\eqref{eq:proj_Newton} and \eqref{eq:pf_Newton}. Moreover, since $\inner{\calh}{v_i, v_n} = 0$ for all $1\leq i<n$, we also have 
\begin{align*}
\inner{\calh}{v - \Pi_{X_{n-1}} v, v_n} = 
\inner{\calh}{v, v_n} - \inner{\calh}{\Pi_{X_{n-1}}v, v_n}
=\inner{\calh}{v, v_n},
\end{align*}
and this proves the second equality in \eqref{eq:Newton_update1}. 

To prove the optimality of the selection rule, consider a generic point $x\in\Omega\setminus X_{n-1}$ and assume that $\{v_k\}_{k=1}^n$ is the Newton basis of 
$V(X_{n-1}\cup\{x\})$, where $v_n$ 
corresponds to the point $x$. The update formula for the interpolant and the orthonormality of the Newton basis give
\begin{align}\label{eq:energy_splitting_0}
\nonumber\norm{\calh}{v - \Pi_{X_{n-1}\cup\{x\}} v }^2 
&= \norm{\calh}{v - \Pi_{X_{n-1}} v - \langle v, v_n \rangle_{\calh} v_n }^2\\
&= \norm{\calh}{v - \Pi_{X_{n-1}} v}^2 - \inner{\calh}{v, v_n}^2,
\end{align}
and using the definition \eqref{eq:n_th_Newton} we get
\begin{align}\label{eq:energy_splitting_1}
\inner{\calh}{v, v_n} 
&= \frac{1}{P_{n-1}(x)} \inner{\calh}{K(\cdot, x) - \sum_{k = 1}^{n-1} v_k(x) v_k, v}
= \frac{v(x) -(\Pi_{X_{n-1}} v)(x)}{P_{n-1}(x)}.
\end{align}
It follows that
\begin{align*}
\argmin\limits_{x\in \Omega\setminus X_{n-1}} \norm{\calh}{v - \Pi_{X_{n-1}\cup\{x\}} v }^2
& = \argmin\limits_{x\in \Omega\setminus X_{n-1}} \left(\norm{\calh}{v - \Pi_{X_{n-1}} v}^2 - \inner{\calh}{v, v_n}^2\right)\\
& = \argmax\limits_{x\in \Omega\setminus X_{n-1}} \inner{\calh}{v, v_n}^2\\
& = \argmax\limits_{x\in \Omega\setminus X_{n-1}} \left(\frac{v(x) -(\Pi_{X_{n-1}} v)(x)}{P_{n-1}(x)}\right)^2,
\end{align*}
which proves \eqref{eq:f_P_optimal}.
\end{proof}

This proposition and Proposition \ref{prop:optimal_weights} clearly show that the greedy algorithm of Definition \ref{def:greedy_alg} coincides with the 
$f/P$-greedy 
algorithm for the interpolation of $v_L$. In particular, the update rule \eqref{eq:Newton_update1} can be used to incrementally construct the interpolant by 
computing 
only a new Newton basis element and the corresponding coefficient each time a point is added. Moreover, this update of the interpolant and the formula 
\eqref{eq:Newton_update2} for the power function
allow the efficient update of the selection rule defined in \eqref{eq:f_P_optimal}. 

\begin{rem}[Change of basis]
Observe that, once the selection of the points is completed, it is convenient to express the interpolant back from the Newton basis to the standard basis 
$\left\{K(\cdot, x_i)\right\}_{i=1}^n$ to compute the quadrature of a function $f$. Indeed, for $\Pi_{X_n} \left(v_L\right) = \sum_{i=1}^n w_i^* K(\cdot, x_i)$ 
it holds 
\begin{align*}
Q_{X_n,L}(f) = \inner{\calh}{f, \Pi_{X_n} v_L} = \sum_{i=1}^n w_i^* \inner{\calh}{K(\cdot, x_i), v} = \sum_{i=1}^n w_i^* f(x_i),
\end{align*}
i.e., the computation of $Q_{X_n,L}(f)$ requires only the knowledge of the weights and the evaluations of $f$ on $X_n$. Using instead $\Pi_{X_n} v_L = 
\sum_{i=1}^n 
\inner{\calh}{v_L, v_i} v_i$ we have
\begin{align*}
Q_{X_n,L}(f) = \sum_{i=1}^n \inner{\calh}{v_L, v_i} \inner{\calh}{v_i, f},
\end{align*}
where the terms $\inner{\calh}{v_i, f}$ are not directly accessible. 

Computing this change of basis is trivial, since the matrix of change of basis 
can be shown to be lower triangular (see \cite{Pazouki2011}). We remark moreover that an efficient implementation of the whole greedy approximation process to 
compute 
$X_n$ and $\{w_i^*\}_{i=1}^n$ (or $\{\alpha_i\}_{i=1}^n$, as in Proposition \ref{prop:interpolation})  via $f/P$-greedy is 
available in Matlab \cite{VKOGA_matlab} and in 
Python \cite{VKOGA_python}. 
\end{rem}

Finally, for later use we remark also that \eqref{eq:energy_splitting_0} and  \eqref{eq:energy_splitting_1} prove that for all $X_n:=X_{n-1}\cup\{x_n\}$ there 
is a splitting of the interpolation error in the form
\begin{align}\label{eq:energy_splitting}
\norm{\calh}{v - \Pi_{X_n} v}^2 
&= \norm{\calh}{v - \Pi_{X_{n-1}} v}^2 - \left(\frac{v(x) -(\Pi_{X_{n-1}} v)(x)}{P_{n-1}(x)}\right)^2,
\end{align}
which represents a kind of energy splitting between the interpolant and the residual.

\section{Convergence for greedy points and continuous kernels}\label{sec:convergence-greedy}
For the greedy algorithm of Definition \ref{def:greedy_alg}, or equivalently the $f/P$-greedy algorithm applied to 
$v_L$, we can now derive convergence results. The proof follows the lines of the one of Theorem 3.7 in \cite{DeVore1996}, and in particular it makes use of 
the following proposition.
\begin{prop}[Lemma 3.4 in \cite{DeVore1996}]\label{lemma:temlyakov}
If $\{a_n\}_{n\in\N}$ is a sequence of non-negative numbers such that for a given $A>0$ it holds $a_1\leq A$ and $a_{n+1}\leq a_n (1 - a_n/A)$, then $a_n\leq A 
n^{-1}$ 
for all $n\in\N$. 
\end{prop}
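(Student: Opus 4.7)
The plan is to prove the statement by induction on $n$, exploiting the concavity of the one-step map $g(x) := x(1 - x/A)$ on $[0, A]$. The base case $n=1$ is immediate since $a_1 \leq A = A/1$ by hypothesis.

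For the inductive step I would first observe a structural fact about $g$: it is concave on $[0, A]$, attains its maximum $A/4$ at $x = A/2$, and is monotonically increasing on $[0, A/2]$. This non-monotonicity means one cannot simply substitute the inductive bound into $g$ without first ensuring the argument lies in the increasing regime. For this reason I would split the step $n \to n+1$ into two cases.

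The transition $n=1 \to n=2$ I would handle by the crude global bound $a_2 \leq g(a_1) \leq \max_{x \in [0,A]} g(x) = A/4 \leq A/2$, which gives the desired conclusion for $n=2$. For the general step with $n \geq 2$, the inductive hypothesis yields $a_n \leq A/n \leq A/2$, placing $a_n$ in the region where $g$ is increasing. Therefore $a_{n+1} \leq g(a_n) \leq g(A/n) = (A/n)(1 - 1/n) = A(n-1)/n^2$. The proof then reduces to the elementary inequality $A(n-1)/n^2 \leq A/(n+1)$, which is equivalent to $(n-1)(n+1) = n^2 - 1 \leq n^2$, trivially true.

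The only real obstacle is recognizing the need for the case split: a naive monotonicity argument applied uniformly in $n$ would fail at $n = 1$, where $a_1$ may lie in the decreasing half $[A/2, A]$ of $g$'s domain and the substitution $g(a_1) \leq g(A)$ would give $a_2 \leq 0$, which is vacuously true but not useful for chaining. Once the first step is absorbed into the cruder maximum bound, the remainder of the induction is purely algebraic.
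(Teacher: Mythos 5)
The paper does not give a proof of this proposition; it is imported verbatim from Lemma 3.4 of DeVore and Temlyakov (1996), so there is no in-paper argument to compare against. The standard proof in that reference works with reciprocals: assuming $a_n>0$ one writes $1/a_{n+1}\geq 1/(a_n(1-a_n/A))\geq (1/a_n)(1+a_n/A)=1/a_n+1/A$ using $1/(1-x)\geq 1+x$, and telescoping gives $1/a_n\geq 1/a_1+(n-1)/A\geq n/A$; the degenerate case $a_n=0$ must be handled separately since all subsequent terms then vanish.

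Your induction through the one-step map $g(x)=x(1-x/A)$ is a correct and self-contained alternative. The essential observations are all in place: $g$ is concave on $[0,A]$ with maximum $A/4$ at $x=A/2$; the transition $n=1\to n=2$ cannot be done by monotone substitution because $a_1$ may sit in the decreasing half $[A/2,A]$, so you absorb it with the global bound $a_2\leq A/4\leq A/2$; and for $n\geq 2$ the inductive bound $a_n\leq A/n\leq A/2$ places $a_n$ in the increasing regime, whence $a_{n+1}\leq g(A/n)=A(n-1)/n^2\leq A/(n+1)$, the last step being $(n-1)(n+1)\leq n^2$. One modest advantage of your route over the reciprocal telescoping is that it never divides by $a_n$, so the case $a_n=0$ requires no special handling; the telescoping proof is marginally shorter. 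Both give the same constant $A$, so neither approach strictly dominates the other.
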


The theorem proves that the worst case error converges to zero at least as $n^{-1/2}$. 
As we will explain when running numerical experiments in Section \ref{sec:numerics}, this has to be understood as a preliminary result, since faster 
convergence is often observed in practice. Nevertheless, this speed of convergence is independent of the input space dimension and, as we will comment 
later, it is better than the one that we proved for uniform points in certain cases. 

We remark that convergence of kernel-based integral approximations based instead on $P$-greedy points has been recently analyzed in \cite{Kanagawa2019b}.

\begin{theorem}[Convergence rates for greedy quadrature]\label{th:greedy_quad}
For any continuous s.p.d. kernel on a bounded and Lebesgue measurable set $\Omega\subset\R^d$, let $L\in\calh'$ be a linear functional such that there exists 
$1\leq q\leq\infty$ and 
$c_L\geq 
0$ such that
\begin{align*}
|L(f)|\leq c_L \norm{L_q(\Omega)}{f}\;\;\fa\;\; f\in\calh, 
\end{align*}
and let $v_L\in\calh$ be its Riesz representer.

Then, for the sequence of points $\{X_n\}_{n\in\N}$ selected by the greedy algorithm it holds
\begin{align*}
e_{\calh}\left(Q_{X_n, L}\right) = \norm{\calh}{v_L - \Pi_{X_n} v_L}\leq c_G n^{-1/2} \;\;\fa\;\; n\in\N,
\end{align*}
where $c_G:=\max\left\{\norm{\calh}{v_L}, c_{L} \left|\Omega\right|^{1/q}\max\limits_{x\in\Omega}{\sqrt{K(x, x)}} \right\}$.
\end{theorem}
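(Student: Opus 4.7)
The plan is to reduce the claim to an application of Proposition~\ref{lemma:temlyakov}. Let $a_n := \|v_L - \Pi_{X_n} v_L\|_{\calh}^2$ and $A := c_G^2$. Since by definition $a_0 = \|v_L\|_{\calh}^2 \leq A$ and the sequence $a_n$ is non-increasing (each greedy step can only reduce the error), one already has $a_n \leq A$ for every $n$. What remains is to establish the recursion $a_{n+1} \leq a_n(1 - a_n/A)$, after which the proposition yields $a_n \leq A/n$ for $n\geq 1$, i.e., the desired $n^{-1/2}$ bound in $\calh$-norm.

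The crucial step, and arguably the only non-routine one, is to notice the following identity. Writing $r_n := v_L - \Pi_{X_n} v_L$, the orthogonality $r_n \perp V(X_n)$ together with $\Pi_{X_n} v_L \in V(X_n)$ and the fact that $v_L$ is the Riesz representer of $L$ gives
\begin{align*}
a_n
= \inner{\calh}{r_n, r_n}
= \inner{\calh}{r_n, v_L} - \inner{\calh}{r_n, \Pi_{X_n} v_L}
= \inner{\calh}{r_n, v_L}
= L(r_n).
\end{align*}
This identity is what activates the $L_q$-continuity hypothesis on $L$ directly on the squared error.

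From here the argument is assembly. The hypothesis gives $a_n = |L(r_n)| \leq c_L \norm{L_q(\Omega)}{r_n}$. Introducing $M_n := \max_{x\in \Omega} |r_n(x)|/P_n(x)$, with the ratio taken to be $0$ where $P_n$ vanishes (since $r_n$ then vanishes as well), the pointwise inequality $|r_n(x)| \leq M_n P_n(x)$ together with the formula~\eqref{eq:pf_Newton} for the power function, which yields $P_n(x)^2 \leq K(x,x) \leq \max_{y\in\Omega} K(y,y)$, combine to give
\begin{align*}
a_n \leq c_L \, |\Omega|^{1/q} \max_{y\in\Omega} \sqrt{K(y,y)} \, M_n \leq c_G\, M_n,
\end{align*}
with the convention $|\Omega|^{1/q} = 1$ when $q=\infty$. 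The $f/P$-greedy selection~\eqref{eq:f_P_optimal} guarantees that $M_n$ is attained at the chosen point $x_{n+1}$, and the energy-splitting identity~\eqref{eq:energy_splitting} then reads $a_{n+1} = a_n - M_n^2$. Combined with the inequality $M_n \geq a_n/c_G$, this yields
\begin{align*}
a_{n+1} \leq a_n - a_n^2/c_G^2 = a_n(1 - a_n/A),
\end{align*}
which is precisely the recursion required by Proposition~\ref{lemma:temlyakov}.

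The main obstacle is the identity $a_n = L(r_n)$: it is elementary but essential, since it is the only place where the structural assumption on $L$ is transferred to the squared error, and without it one cannot derive a uniform lower bound on $M_n$ across all greedy steps. The remaining bounds on $P_n$ and its $L_q$-norm are immediate consequences of the Newton-basis representation of the power function and the boundedness of $\Omega$.
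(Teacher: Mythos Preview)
Your proof is correct and follows essentially the same approach as the paper: both derive the recursion of Proposition~\ref{lemma:temlyakov} from the identity $\norm{\calh}{r_n}^2 = L(r_n)$, the $L_q$-continuity of $L$, the bound $P_n(x)\leq \max_{y}\sqrt{K(y,y)}$, and the energy splitting~\eqref{eq:energy_splitting}. Your presentation is in fact slightly more streamlined, since you write $\norm{\calh}{r_n}^2 = L(r_n)$ directly (the paper routes through $L\circ \Pi_{V(X_n)^\perp}$ and Proposition~\ref{prop:restriction}, which is equivalent because $r_n\in V(X_n)^\perp$) and you use the pointwise inequality $|r_n|\leq M_n P_n$ rather than introducing an auxiliary maximizer $\bar x$ of $|r_n|$.
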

\begin{proof}
For notational simplicity we denote the residual by $r_n:= v_L - \Pi_{X_n} v_L$. We assume that $r_n\neq 0$ for any finite $n$, otherwise we are done. 
Then 
equation \eqref{eq:energy_splitting} for $v_L$ reads
\begin{align}\label{eq:greedy_intermediate3}
\norm{\calh}{r_n}^2 
&= \norm{\calh}{r_{n-1}}^2 - \frac{\left|r_{n-1}(x_n)\right|^2}{P_{n-1}(x_n)^2}
= \norm{\calh}{r_{n-1}}^2 \left(1 - \frac{\left|r_{n-1}(x_n)\right|^2}{P_{n-1}(x_n)^2 \norm{\calh}{r_{n-1}}^2}\right),
\end{align}
and by the definition \eqref{eq:f_P_greedy} of the $f/P$-greedy selection we have that 
\begin{align}\label{eq:greedy_intermediate1}
\frac{\left|r_{n-1}(x_n)\right|}{P_{n-1}(x_n)} &=\max\limits_{x\in\Omega\setminus X_{n-1}} \frac{\left|r_{n-1}(x)\right|}{P_{n-1}(x)}.
\end{align}
Now let $\bar x\in\Omega$ be such that
\begin{align}\label{eq:greedy_intermediate2}
\bar x:= \argmax\limits_{x\in\Omega} \left|r_{n-1}(x)\right| = \argmax\limits_{x\in\Omega\setminus X_{n-1}} \left|r_{n-1}(x)\right|,
\end{align}
where the two maxima are equal since $r_{n-1} = 0$ on $X_{n-1}$ and thus $\bar x\notin X_{n-1}$. Since $\bar x\notin X_{n-1}$ it also holds $P_{n-1}(\bar 
x)\neq 0$, and then using \eqref{eq:greedy_intermediate1} and \eqref{eq:greedy_intermediate2} we have 
\begin{align}\label{eq:greedy_intermediate4}
\nonumber\frac{\left|r_{n-1}(x_n)\right|}{P_{n-1}(x_n)}
&=\max\limits_{x\in\Omega\setminus X_{n-1}} \frac{\left|r_{n-1}(x)\right|}{P_{n-1}(x)}
\geq \frac{\left|r_{n-1}(\bar  x)\right|}{P_{n-1}(\bar x)}
= \frac{\norm{L_{\infty}(\Omega)}{r_{n-1}}}{P_{n-1}(\bar x)}\\
&\geq \frac{\norm{L_{\infty}(\Omega)}{r_{n-1}}}{\norm{L_{\infty}(\Omega)}{P_{n-1}}}.
\end{align}
Moreover, since $Id - \Pi_V = \Pi_{V^{\perp}}$ for any closed subspace $V\subset\calh$, and using again the fact that orthogonal projections are self adjoint, 
we have
\begin{align*}
\norm{\calh}{r_{n-1}}^2 
&= \inner{\calh}{r_{n-1}, r_{n-1}}
= \inner{\calh}{v_L - \Pi_{X_{n-1}} v_L, r_{n-1}}
= \inner{\calh}{\Pi_{V(X_{n-1})^{\perp}} v_L, r_{n-1}}\\
&= \inner{\calh}{v_L, \Pi_{V(X_{n-1})^{\perp}}  r_{n-1}}
= \left( L \circ \Pi_{V(X_{n-1})^{\perp}}\right) (r_{n-1}),
\end{align*}
and using Proposition \ref{prop:restriction} and the boundedness of $\Omega$ we can thus control the norm of the residual as 
\begin{align*}
\norm{\calh}{r_{n-1}}^2 
&\leq c_{L \circ \Pi_{V(X_{n-1})^{\perp}}} \norm{L_q(\Omega)}{r_{n-1}}
\leq c_{L} \norm{L_q(\Omega)}{r_{n-1}}
\leq c_{L} \left|\Omega\right|^{1/q} \norm{L_{\infty}(\Omega)}{r_{n-1}},
\end{align*}
i.e.,
$\norm{L_{\infty}(\Omega)}{r_{n-1}} \geq {\norm{\calh}{r_{n-1}}^2}/({c_{L} \left|\Omega\right|^{1/q}})$.
Combining this bound and inequality \eqref{eq:greedy_intermediate4} we can continue to obtain
\begin{align*}
\frac{\left|r_{n-1}(x_n)\right|^2}{P_{n-1}(x_n)^2 \norm{\calh}{r_{n-1}}^2}
&\geq\frac{\norm{L_{\infty}(\Omega)}{r_{n-1}}^2}{\norm{L_{\infty}(\Omega)}{P_{n-1}}^2 \norm{\calh}{r_{n-1}}^2}
\geq \frac{\norm{\calh}{r_{n-1}}^4}{\norm{L_{\infty}(\Omega)}{P_{n-1}}^2 \norm{\calh}{r_{n-1}}^2 c_{L}^2 \left|\Omega\right|^{2/q}}\\
&= \frac{\norm{\calh}{r_{n-1}}^2}{\norm{L_{\infty}(\Omega)}{P_{n-1}}^2  c_{L}^2 \left|\Omega\right|^{2/q}}.
\end{align*}
Now we can set $A_{n-1}:= {\norm{L_{\infty}(\Omega)}{P_{n-1}}^2  c_{L}^2 \left|\Omega\right|^{2/q}}$ and, since $P_{n}$ is non 
increasing in $n$, we have
\begin{align*}
A_{n-1} &\leq A_0 =   c_{L}^2 \left|\Omega\right|^{2/q}\norm{L_{\infty}(\Omega)}{P_{0}}^2 
= c_{L}^2 \left|\Omega\right|^{2/q}\max\limits_{x\in\Omega}{K(x, x)},
\end{align*}
and thus the last inequality reads
\begin{align*}
\frac{\left|r_{n-1}(x_n)\right|^2}{P_{n-1}(x_n)^2 \norm{\calh}{r_{n-1}}^2}
&\geq \frac{\norm{\calh}{r_{n-1}}^2}{A_{0}}.
\end{align*}
Inserting this result in \eqref{eq:greedy_intermediate3} and defining $A:= \max\left\{\norm{\calh}{v_L}^2, A_0 \right\}$, we finally have
\begin{align*}
\norm{\calh}{r_n}^2 
&\leq \norm{\calh}{r_{n-1}}^2 \left(1 - \frac{\norm{\calh}{r_{n-1}}^2}{A_{0}}\right)
\leq \norm{\calh}{r_{n-1}}^2 \left(1 - \frac{\norm{\calh}{r_{n-1}}^2}{A}\right)
\end{align*}
and 
\begin{align*}
\norm{\calh}{r_0}^2 = \norm{\calh}{v_L}^2 \leq A,
\end{align*}
and thus the result follows by applying Lemma \ref{lemma:temlyakov} with $a_n:= \norm{\calh}{r_{n-1}}^2$.
\end{proof}

Before discussing some consequence of this result, we show in the following corollary that the same proof idea applies also to other greedy interpolation 
strategies which are commonly used in kernel interpolation and quadrature. 

\begin{cor}[Convergence for other selection rules]
Under the same hypotheses of Theorem \ref{th:greedy_quad}, the same convergence result holds if the $f/P$-greedy selection rule \eqref{eq:f_p_greedy_selection} 
is replaced by one of the following ones:
\begin{enumerate}[a)]
 \item The $f$-greedy selection rule, i.e., 
 \begin{align*}
x_n\in\argmax\limits_{x\in \Omega\setminus X_{n-1}} \left|v(x) - (\Pi_{X_{n-1}} v)(x)\right|.
 \end{align*}
 \item The selection rule of Algorithm~4 on p.\@~93  in \cite{Oettershagen2017}, i.e.,
 \begin{align*}
x_n\in\argmax\limits_{x\in \Omega\setminus X_{n-1}} \frac{\left|v(x) -(\Pi_{X_{n-1}} v)(x)\right|}{\sqrt{K(x, x)}}.
 \end{align*}
\end{enumerate}

\end{cor}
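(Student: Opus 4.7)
The plan is to re-use the proof of Theorem \ref{th:greedy_quad} almost verbatim, since the only place where the specific form of the $f/P$-greedy selection is used is the inequality \eqref{eq:greedy_intermediate4}, which provides the lower bound
\begin{align*}
\frac{\left|r_{n-1}(x_n)\right|}{P_{n-1}(x_n)} \geq \frac{\norm{L_{\infty}(\Omega)}{r_{n-1}}}{C_{n-1}},
\end{align*}
for a suitable constant $C_{n-1}$ that is uniformly bounded in $n$. Every subsequent step, the application of Proposition \ref{prop:restriction} to control $\norm{\calh}{r_{n-1}}^2$ by $\norm{L_\infty(\Omega)}{r_{n-1}}$, the energy splitting \eqref{eq:energy_splitting}, and the application of Lemma \ref{lemma:temlyakov} is insensitive to how $x_n$ was chosen. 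So I only need to re-establish this one inequality for each of the two alternative rules.

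For case a), the $f$-greedy rule gives $|r_{n-1}(x_n)| = \norm{L_\infty(\Omega)}{r_{n-1}}$ directly, and since trivially $P_{n-1}(x_n) \leq \norm{L_\infty(\Omega)}{P_{n-1}}$, I obtain
\begin{align*}
\frac{\left|r_{n-1}(x_n)\right|}{P_{n-1}(x_n)} \geq \frac{\norm{L_{\infty}(\Omega)}{r_{n-1}}}{\norm{L_{\infty}(\Omega)}{P_{n-1}}},
\end{align*}
which is exactly \eqref{eq:greedy_intermediate4}, so the constant $c_G$ in the conclusion is unchanged. For case b), the rule maximizes $|r_{n-1}(x)|/\sqrt{K(x,x)}$, and by choosing the test point $\bar x$ as in \eqref{eq:greedy_intermediate2} I get $|r_{n-1}(x_n)|/\sqrt{K(x_n,x_n)} \geq \norm{L_\infty(\Omega)}{r_{n-1}}/\max_{y\in\Omega}\sqrt{K(y,y)}$; combining this with the pointwise bound $P_{n-1}(x_n)^2 = K(x_n,x_n) - \sum_{i=1}^{n-1} v_i(x_n)^2 \leq K(x_n,x_n)$ from \eqref{eq:pf_Newton} yields
\begin{align*}
\frac{\left|r_{n-1}(x_n)\right|}{P_{n-1}(x_n)} \geq \frac{\left|r_{n-1}(x_n)\right|}{\sqrt{K(x_n,x_n)}} \geq \frac{\norm{L_{\infty}(\Omega)}{r_{n-1}}}{\max_{y\in\Omega}\sqrt{K(y,y)}},
\end{align*}
which is again of the required form.

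From this point, the argument of Theorem \ref{th:greedy_quad} goes through with only cosmetic changes: in case a) the constant $A_0$ reads identically as $c_L^2|\Omega|^{2/q}\max_{x\in\Omega}K(x,x)$ because $\norm{L_\infty(\Omega)}{P_{n-1}}\leq \norm{L_\infty(\Omega)}{P_0}=\max_x\sqrt{K(x,x)}$, and in case b) the very same constant already appears explicitly in the lower bound, so one can take $A_0 := c_L^2|\Omega|^{2/q}\max_{x\in\Omega}K(x,x)$ directly without invoking monotonicity of the power function. I expect no serious obstacle; the only thing to verify carefully is that in case b) the argmax is not attained at a point of $X_{n-1}$ (so that $P_{n-1}(x_n)\neq 0$), which follows because $r_{n-1}$ vanishes on $X_{n-1}$ while $\sqrt{K(x,x)}>0$, exactly as in \eqref{eq:greedy_intermediate2}. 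Thus the conclusion $\norm{\calh}{v_L - \Pi_{X_n} v_L}\leq c_G n^{-1/2}$ holds with the same constant $c_G$ as in Theorem \ref{th:greedy_quad}.
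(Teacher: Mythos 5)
Your proof is correct and follows essentially the same route as the paper's: isolate the single step where the selection rule enters, namely the lower bound of equation \eqref{eq:greedy_intermediate4}, re-derive it for each rule (using $|r_{n-1}(x_n)|=\norm{L_\infty(\Omega)}{r_{n-1}}$ for $f$-greedy, and $P_{n-1}\leq P_0=\sqrt{K(\cdot,\cdot)}$ for the second rule), and note the rest of the argument is unchanged. Your observation that the constant $c_G$ is identical and that in case b) one can set $A_0$ directly without invoking monotonicity of $P_n$ matches the paper's remarks verbatim.
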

\begin{proof}
The proof of Theorem \ref{th:greedy_quad} depends on the $f/P$-greedy selection rule only via equation \eqref{eq:greedy_intermediate4} (and on equation 
\eqref{eq:greedy_intermediate1}, which is nevertheless used only to derive the latter). We thus only need to show how to obtain the same bound as in 
\eqref{eq:greedy_intermediate4} with these other selection rules:
\begin{enumerate}[a)]
 \item In this case, by the definition of $x_n$, it immediately holds that 
 \begin{align*}
\frac{\left|r_{n-1}(x_n)\right|}{P_{n-1}(x_n)}
&= \frac{\norm{L_{\infty}(\Omega)}{r_{n-1}}}{P_{n-1}(x_n)}
\geq \frac{\norm{L_{\infty}(\Omega)}{r_{n-1}}}{\norm{L_{\infty}(\Omega)}{P_{n-1}}}.
\end{align*}
\item For this selection rule, using the same argument as in the proof of Theorem \ref{th:greedy_quad} one obtains
 \begin{align*}
\frac{\left|r_{n-1}(x_n)\right|}{P_{n-1}(x_n)}
&\geq \frac{\norm{L_{\infty}(\Omega)}{r_{n-1}}}{\norm{L_{\infty}(\Omega)}{\sqrt{K(x, x)}}}
\end{align*}
in place of \eqref{eq:greedy_intermediate4}, and then the following of the proof is simplified since $\sqrt{K(x, x)}=P_0(x)$ for all $x\in\Omega$.
\end{enumerate}
\end{proof}

\begin{rem}[Other selection criteria]
We remark that the proof does not directly apply instead to the power scaled residual greedy selection (psr-greedy) which has been recently introduced in 
\cite{Dutta2020}. In this case the new point is selected as 
\begin{align*}
x_n\in\argmax\limits_{x\in \Omega\setminus X_{n-1}} P_{n-1}(x) \left|v(x) - (\Pi_{X_{n-1}} v)(x)\right|.
\end{align*}
The present approach fails in this case. Indeed, following the same idea, one needs to bound from below the ratio $P_{n-1}(\bar x) / P_{n-1}(x_{n-1})$, where 
$\bar x$ is defined as in \eqref{eq:greedy_intermediate2}, and this quantity may be arbitrarily small.
\end{rem}

Theorem~\ref{th:greedy_quad} is a first result on the convergence of the algorithm, and it has the advantage of providing rates of convergence that do not 
depend on the input dimension. 
Nevertheless, it is far from optimal in the sense that the ideal result that one can aim for in the setting of adaptive algorithms is rather the following: 
Given a 
functional $L$ such that there exist a sequence of possibly unknown (or not computable) point sets $\{\bar X_n\}_{n\in\N}$, and $t, C>0$ such that 
\begin{align*}
\norm{\calh}{v_L - \Pi_{\bar  X_n} v_L } \leq C n^{-t}\;\;\fa n\in\N,
\end{align*}
find a constructive algorithm that selects points $\{X_n\}_{n\in\N}$ such that 
\begin{align*}
\norm{\calh}{v_L - \Pi_{X_n} v_L} \leq C' n^{-t}\;\;\fa n\in\N,
\end{align*}
with a possibly larger constant $C'\geq C$. There is no reason to expect that this can be achieved by greedy algorithms in particular, but this is actually the 
case for 
other types of greedy algorithms (see e.g. \cite{Binev2010}), and they are furthermore particularly attractive for their easiness of implementation.

In the direction of this optimal expectation, it should also be mentioned that our algorithm is closely related to the greedy algorithms of 
\cite{Temlyakov2008}, where 
a dictionary in a generic Hilbert space $H$ is used to approximate a single function $f\in H$. In this case, the paper \cite{DeVore1996} proves that for every 
Hilbert 
space there exist a dictionary and a function in the class of Example \ref{example:temlyakov_class} such that the rate of $n^{-1/2}$ can not be improved. 
Nevertheless, 
in the case of the current paper we are using not a generic dictionary, but rather the particular one that is generated by translates of the reproducing 
kernel, and thus there 
is no reason to believe that the result of Theorem \ref{th:greedy_quad} can not be improved.

Despite being not optimal, we remark that also the rates of convergence proved here are better than the ones of Theorem \ref{th:uniform_quad} in certain cases.
Observe that this of course does not mean, even in this case, that greedy points are better than uniform points, but only that the estimate is better. 
\begin{prop}[Comparison of the rates for greedy and uniform points]\label{prop:compare_exponents}
Under the assumptions of Theorem \ref{th:uniform_quad}, the rates of convergence of Theorem \ref{th:greedy_quad} are better than the ones of Theorem 
\ref{th:uniform_quad} if
\begin{align*}
\frac{d}2<\tau  < \frac{d}2 + \frac{d}2 - \frac{d}q = d\left(1 - \frac1q\right).
\end{align*}
If $L$ is an integration functional as in Example \ref{example:measure}, this is equivalent to requiring that $d/2<\tau<d/p$ if $\nu\in L_p(\Omega)$.
\end{prop}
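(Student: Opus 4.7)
The plan is purely to compare the two decay exponents obtained in Theorems \ref{th:uniform_quad} and \ref{th:greedy_quad} and then translate the dual exponent $q$ back into the integrability index $p$ of the density for the integration case. No new analytical ingredients should be needed.

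First, I would write down the two rates side by side: Theorem \ref{th:uniform_quad} yields $e_{\calh}(Q_{X_n,L})\lesssim n^{-\tau/d+(1/2-1/q)_+}$ on quasi-uniform sequences, while Theorem \ref{th:greedy_quad} yields $e_{\calh}(Q_{X_n,L})\lesssim n^{-1/2}$ on greedy sequences. Comparing the exponents, the greedy rate strictly dominates (in the sense of a faster asymptotic decay) exactly when
\begin{equation*}
\frac{1}{2} \;>\; \frac{\tau}{d}-\Bigl(\frac{1}{2}-\frac{1}{q}\Bigr)_+,
\qquad \text{i.e.,} \qquad
\tau \;<\; \frac{d}{2}+d\Bigl(\frac{1}{2}-\frac{1}{q}\Bigr)_+.
\end{equation*}
This inequality must be combined with the standing requirement $\tau>d/2$ from Theorem \ref{th:uniform_quad} (which ensures the Sobolev embedding and the norm equivalence \eqref{eq:norm_equivalence}).

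Next I would split on the sign of $1/2-1/q$. If $q<2$, then $(1/2-1/q)_+=0$ and the compatibility with $\tau>d/2$ forces an empty range, so the comparison is vacuous. If $q\ge 2$, then $(1/2-1/q)_+=1/2-1/q$, and the upper bound becomes
\begin{equation*}
\frac{d}{2}+d\Bigl(\frac{1}{2}-\frac{1}{q}\Bigr) \;=\; \frac{d}{2}+\frac{d}{2}-\frac{d}{q} \;=\; d\Bigl(1-\frac{1}{q}\Bigr),
\end{equation*}
giving exactly the stated interval $d/2<\tau<d(1-1/q)$, which is non-empty precisely when $q>2$.

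Finally, for the integration functional of Example \ref{example:measure}, the H\"older conjugacy $1/p+1/q=1$ lets me rewrite $1-1/q=1/p$, so the upper bound $d(1-1/q)$ becomes $d/p$, yielding the equivalent condition $d/2<\tau<d/p$. The only thing to keep in mind is the mild housekeeping of excluding the degenerate endpoint cases (equality of the exponents) and noting that the comparison concerns asymptotic rates rather than the multiplicative constants $c_U$ and $c_G$, which have different provenance. Since the whole argument is an algebraic manipulation of already-established exponents, I do not foresee any genuine obstacle; the only point requiring care is to keep the positive part $(\cdot)_+$ bookkeeping clean when $q<2$ so that the empty-range case is correctly identified.
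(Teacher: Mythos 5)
Your argument is correct and is essentially the same as the paper's: both amount to comparing the exponent $-1/2$ from Theorem \ref{th:greedy_quad} with $-\tau/d + (1/2-1/q)_+$ from Theorem \ref{th:uniform_quad}, observing that the resulting interval for $\tau$ is non-empty only when $q>2$, simplifying the upper bound to $d(1-1/q)$, and then using H\"older conjugacy $1/p+1/q=1$ to rewrite it as $d/p$ in the integration case. The only cosmetic difference is that you split explicitly into $q<2$ and $q\ge 2$ whereas the paper directly notes that a non-empty range forces $(1/2-1/q)_+>0$; either way the conclusion is identical.
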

\begin{proof}
Since $\tau>d/2$, the proved rate of convergence of the greedy algorithm is better if
\begin{align}\label{eq:rem_comparison1}
{-\frac12} < -\frac{\tau}{d} +\left(\frac12 - \frac1q\right)_+,
\end{align}
and this can happen only if $\ \left(\frac12 - \frac1q\right)_+ >0$, i.e., $q>2$. 
In this case we have $\left(1/2 - 1/q\right)_+ = 1/2 - 1/q\in(0, 1/2]$, and then \eqref{eq:rem_comparison1} guarantees that the greedy estimate is better 
for all $d$ and $\tau$ such that 
\begin{align*}
\frac{d}2<\tau  < \frac{d}2 + \frac{d}2 - \frac{d}q = d\left(1 - \frac1q\right).
\end{align*}
In the particular case of integration functionals, this is equivalent to require that $d/2<\tau<d/p$ if $\nu\in L_p(\Omega)$.
\end{proof}

We conclude this section with some remarks on the results.

\begin{rem}[Almost optimal rate when $\tau \approx d/2$]
If $\Omega \subset \R^d$ is sufficiently regular, it is known that the rate $n^{-\tau/d}$ is optimal for approximation of $L(f) := \int_{\Omega} f(x) dx$ for 
functions in the Sobolev space $W_2^\tau(\Omega)$ with $\tau > d/2$ the rate. That is, there is no sequence of quadrature rules whose worst case error decays 
faster than this; see~\cite[Section~1.3.11]{Novak1988} and~\cite[Section~4.2.4]{NovakWozniakowski2008}.
Theorem~\ref{th:greedy_quad} therefore shows that the greedy quadrature algorithm is almost optimal if $\tau$ is close to $d/2$.
\end{rem}

\begin{rem}[Monotonicity and a-posteriori error estimation]
Observe that, since the greedy points are nested, the worst case error $\norm{\calh}{v_L - \Pi_{X_n} v_L}$ is strictly decreasing. This nevertheless does 
not 
mean that 
for one fixed function $f\in\calh$ the error 
\begin{align*}
\seminorm{}{L(f) - Q_{X_n,L}(f)} 
\end{align*}
is decreasing. In particular, in the case of a single function $f$, it would be interesting and useful to derive a-posteriori error 
estimator that allows to stop the greedy selection when the desired accuracy is reached on $f$. 

\end{rem}

\begin{rem}[Interpolation error in the $L_{\infty}(\Omega)$ norm]
We have analyzed so far the relation between the $\calh$-norm interpolation error of $v_L$ and the worst case quadrature error of $L$. If another norm is 
considered, the 
relation between the two errors is no more in place, but still from the point of view of interpolation it makes sense to measure other types of errors. 

If for example we measure the interpolation error $\norm{L_r(\Omega)}{v_L - \Pi_{X_n}v_L}$ for some $1\leq r\leq q$, then for any $X_n\subset\Omega$ with 
$h_{X_n}\leq h_0$, using Theorem \ref{th:sampling_inequality} and the same procedure as in Theorem \ref{th:uniform_quad} we have
\begin{align*}
\norm{L_r(\Omega)}{v_L - \Pi_{X_n}v_L} \leq c_S c_E h_{X_n}^{\tau -d (1/2 - 1/r)_+} \norm{\calh}{v_L - \Pi_{X_n}v_L}.
\end{align*}
and the norm in the right hand side is the one that has been bounded in Theorem \ref{th:uniform_quad} and Theorem \ref{th:greedy_quad}. This means that, 
provided 
$h_{X_n}\leq h_0$, the error in an $L_r$-norm carries an additional factor depending on $h_{X_n}$. By definition, $h_n$ is decreasing as $n^{-1/d}$ in the 
case  of uniform point sequences, while it does not even need to decrease for greedily selected points, which can leave arbitrarily large holes in $\Omega$. 

Nevertheless, also in this case we can check when the proven greedy convergence rate is better than the proven convergence rates for uniform points, and in 
this case the 
range will be smaller. In particular, similar computations as in Proposition \ref{prop:compare_exponents} give that the greedy rates are better if 
\begin{align*}
\frac{d}{2}<\tau<\frac{d}{2} \left(\frac32 - \frac1r  - \frac1{q}\right).
\end{align*}
For example, for $q=r=\infty$ this holds if $\frac{d}{2}<\tau<\frac{3}{4}d$.

\end{rem}

\begin{rem}[Integration on manifolds]\label{rem:manifold}
We remark that both Theorem \ref{th:uniform_quad} and Theorem \ref{th:greedy_quad} remain valid also for certain sets $\Omega$ which are not flat subsets of 
$\R^d$.

Namely, Theorem \ref{th:greedy_quad} only assumes that $L_p$ spaces can be defined over $\Omega$, and this is a fairly general assumption. Otherwise, the space 
$\calh$ is treated as a generic Hilbert space, without particular links to the structure of the underlying subset of $\R^d$.

Theorem \ref{th:uniform_quad}, on the other hand, makes use of the error estimate of Theorem \ref{th:sampling_inequality}, which holds for $\Omega\subset\R^d$. 
Nevertheless, similar results exist for more general sets such as for manifolds embedded in $\R^s$ (see e.g. \cite{Wright2012}), and in this case the error 
rates depend on the dimension $d$ of the manifold. These kind of results can be used in the proof of the theorem, and we analyze 
an example of this setting in Section \ref{sec:exp_sphere}.

\end{rem}

\section{Perturbations of the Riesz representer}\label{sec:stability}

The construction of both a generic weight-optimal quadrature formula, and the greedy selection of the centers, are based on the interpolation of the Riesz 
representer 
$v_L$. The whole algorithm is thus based on the availability of evaluations of $v_L$, and this can be an unrealistic assumption in some cases. In particular, 
since 
$v_L = L^y(K(\cdot, y))$, we need to have an efficient and exact way to compute the functional on the kernel, which is often not possible for example in the 
case of 
integration on arbitrary sets $\Omega$. 

Instead, we can assume to have an accurate but expensive approximation $\tilde L$ of $L$, which can be evaluated but is too expensive to be practical 
when the evaluation of the integrand $f$ is expensive. Since the evaluation of the kernel is instead cheap, we can compute the exact Riesz representer of 
$\tilde L$, 
i.e., $\tilde v_L : = \tilde L^y K(\cdot, y)$, and use it as an approximation of the exact Riesz representer $v_L$.

For example, $\tilde L$ can be in the form of a high-accuracy quadrature rule $\tilde L(f) := \sum_{i=1}^M \rho_i f(z_i)$ where $M$ is very large. In this case 
computing $\tilde L(f)$ for a function $f$ which is expensive to evaluate is not a viable option, while we can construct $\tilde v_L:=\sum_{i=1}^M \rho_i 
K(\cdot, \tilde 
z_i)$.

As a first step, the following proposition states that the worst case error of the weight-optimal quadrature is stable with respect to the use of an 
approximated Riesz 
representer, provided that also the perturbed functional is continuous.

Similar results appear in~\cite[Appendix~B]{Briol2019} and~\cite[Section~2]{Sommariva2006} for numerical integration, although they are less sharp. For 
example, the former has $\sqrt{n} \varepsilon_L^2$ in the place of $\varepsilon_L^2$ in the upper bound. 

\begin{prop}[Stability]
Let $L, \tilde L\in\calh'$ be two functionals with Riesz representers $v_L, \tilde v_L\in\calh$, where $\tilde L$ is a perturbation of $L$ with  
\begin{align*}
\varepsilon_L:= \sup\limits_{\norm{\calh}{f}\leq 1} \seminorm{}{L(f) - \tilde L(f)}. 
\end{align*}
Let $X\subset \Omega$ and let $Q_{X, L}$ and $\tilde Q_{X, L}:=Q_{X, \tilde  L}$ be the weight-optimal quadrature rules obtained by the interpolation on $X$ of 
$v_L$ 
and $\tilde v_L$, respectively. 

Then the error obtained by using the wrong Riesz representer $\tilde v_L$ to approximate $L$ can be bounded as
\begin{align*}
\sup\limits_{\norm{\calh}{f}\leq 1}\seminorm{}{L(f) - \tilde Q_{X,L}(f)}^2 &= e_{\calh}(Q_{X, L})^2 + \varepsilon_L^2.%\\
% \seminorm{}{L(f) - \tilde Q_{X, L}(f)} &\leq \sqrt{e_{\calh}(Q_{X, L})^2 + \varepsilon_L^2}\ \norm{\calh}{f} \;\;\fa f\in\calh.
\end{align*}
\end{prop}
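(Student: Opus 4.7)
The plan is to convert both sides of the identity into Hilbert-space norms and equate them via an orthogonal decomposition. First, by Proposition~\ref{prop:optimal_weights} applied to $\tilde L$, the rule $\tilde Q_{X,L}=Q_{X,\tilde L}\in\calh'$ has Riesz representer $\Pi_X \tilde v_L$, so for every $f\in\calh$ one has
\begin{align*}
L(f) - \tilde Q_{X,L}(f) = \inner{\calh}{v_L - \Pi_X \tilde v_L, f}.
\end{align*}
Cauchy--Schwarz is saturated at $f = (v_L - \Pi_X \tilde v_L)/\norm{\calh}{v_L - \Pi_X \tilde v_L}$, so
\begin{align*}
\sup_{\norm{\calh}{f}\leq 1} \seminorm{}{L(f) - \tilde Q_{X,L}(f)}^2 = \norm{\calh}{v_L - \Pi_X \tilde v_L}^2,
\end{align*}
and the same Riesz argument applied to $L - \tilde L \in \calh'$ yields $\varepsilon_L = \norm{\calh}{v_L - \tilde v_L}$.

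The core step is the orthogonal decomposition
\begin{align*}
v_L - \Pi_X \tilde v_L = (v_L - \Pi_X v_L) + \Pi_X(v_L - \tilde v_L),
\end{align*}
in which the first summand lies in $V(X)^\perp$ while the second lies in $V(X)$. The Pythagorean identity therefore yields
\begin{align*}
\norm{\calh}{v_L - \Pi_X \tilde v_L}^2 = \norm{\calh}{v_L - \Pi_X v_L}^2 + \norm{\calh}{\Pi_X(v_L - \tilde v_L)}^2,
\end{align*}
and \eqref{eq:int_vs_quad_error} identifies the first summand on the right with $e_{\calh}(Q_{X,L})^2$.

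The main obstacle — and the only step remaining — is to match $\norm{\calh}{\Pi_X(v_L - \tilde v_L)}^2$ with $\varepsilon_L^2 = \norm{\calh}{v_L - \tilde v_L}^2$. Since orthogonal projections are nonexpansive, only $\norm{\calh}{\Pi_X(v_L - \tilde v_L)} \leq \norm{\calh}{v_L - \tilde v_L}$ holds in general, with equality precisely when $v_L - \tilde v_L \in V(X)$. To recover the stated identity as written, I would either (i) exploit the assumption implicit in the perturbation model — namely that $\tilde v_L$ is built as $\tilde L^y K(\cdot,y)$ from a $\tilde L$ designed so that $v_L - \tilde v_L$ is supported (up to orthogonal complement) in $V(X)$ — or (ii) read the identity in a worst-case sense over all perturbations with $\norm{\calh}{v_L - \tilde v_L} = \varepsilon_L$, in which case one realises the supremum by choosing $v_L - \tilde v_L \in V(X)$. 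In either interpretation the Pythagorean splitting is the engine of the proof, and only this last identification requires care.
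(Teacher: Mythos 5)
Your argument follows exactly the paper's route: identify the Riesz representer of $Q_{X,\tilde L}$ with $\Pi_X\tilde v_L$, write $v_L - \Pi_X\tilde v_L = (v_L - \Pi_X v_L) + \Pi_X(v_L-\tilde v_L)$, and use orthogonality of $V(X)$ and $V(X)^{\perp}$ to split the squared norm. The reservation you raise at the end is justified, and it points at a flaw in the paper rather than in your proof: the paper's own proof silently replaces $\norm{\calh}{\Pi_X(v_L-\tilde v_L)}^2$ by $\varepsilon_L^2=\norm{\calh}{v_L-\tilde v_L}^2$, which, as you observe, is only valid when $v_L-\tilde v_L\in V(X)$; in general the projection is nonexpansive and one only gets ``$\leq$''. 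The correct conclusion of this argument --- and evidently the intended one, given the wording ``can be bounded as'' and the comparison with the cited references where $\sqrt{n}\,\varepsilon_L^2$ appears ``in the upper bound'' --- is the inequality
\begin{align*}
\sup_{\norm{\calh}{f}\leq 1}\seminorm{}{L(f)-\tilde Q_{X,L}(f)}^2
= e_{\calh}(Q_{X,L})^2 + \norm{\calh}{\Pi_X(v_L-\tilde v_L)}^2
\leq e_{\calh}(Q_{X,L})^2 + \varepsilon_L^2,
\end{align*}
which your Pythagorean decomposition proves completely. Your two proposed repairs (assuming $v_L-\tilde v_L$ lies in $V(X)$, or reinterpreting the claim as a worst case over perturbations) are not assumptions made anywhere in the paper and are unnecessary once the statement is read as an upper bound; I would simply state the result with ``$\leq$'' and note that equality holds precisely when $v_L-\tilde v_L\in V(X)$.
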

\begin{proof}
By linearity we have that $\delta_L:= v_L - \tilde v_L\in\calh$ is the Riesz representer of the functional $L-\tilde L\in\calh'$, and by assumption 
$\norm{\calh}{\delta_L}=\varepsilon_L$. We can then rewrite the worst-case error of the statement as required, i.e.,
\begin{align*}
\sup\limits_{\norm{\calh}{f}\leq 1}\seminorm{}{L(f) - \tilde Q_{X,L}(f)}^2 
&=\norm{\calh}{v_L - \Pi_X\left(\tilde v_L\right)}^2
=\norm{\calh}{v_L - \Pi_X  v_L + \Pi_X  \delta_L}^2\\
&=\norm{\calh}{v_L - \Pi_X  v_L}^2 + \norm{\calh}{\Pi_X  \delta_L}^2 + 2 \inner{\calh}{v_L - \Pi_X  v_L, \Pi_X  \delta_L}\\
&= e_{\calh}(Q_{X, L})^2 + \varepsilon_L^2+ 2 \inner{\calh}{v_L - \Pi_X  v_L, \Pi_X  \delta_L}\\
&= e_{\calh}(Q_{X, L})^2 + \varepsilon_L^2,
\end{align*}
where in the last step we used the fact that $\Pi_X  \delta_L\in V(X)$ and $v_L - \Pi_X  v_L\in V(X)^{\perp}$.
\end{proof}

We remark that if $\tilde{L}$ is a quadrature rule, then $\varepsilon_L$ is merely the worst-case error of this quadrature rule, and can thus be estimated. 
Moreover, for e.g. $\rho_i$ and $\tilde{z}_i$ the weights and points of an inexpensive (quasi) Monte Carlo rule, $\varepsilon_L$ decays with a well-known rate 
when $M$ 
increases.

This results is useful to quantify the error introduced by an approximated knowledge of $v_L$ only if $X$ is fixed. If instead the greedy algorithm is used, 
then also 
the points $X$ themselves depend crucially on $v_L$, and thus perturbations of the Riesz representer lead to different sequences of quadrature 
centers. To deal with this case we have the following result.
In this case we need to assume that $\tilde L$, and not $L$, is continuous on $L_q(\Omega)\cap \calh$, since it is the one used to run the greedy algorithm. 

\begin{prop}[Stability of the greedy quadrature]\label{th:stability_two}
Let $L, \tilde L\in\calh'$ be two functionals with Riesz representers $v_L, \tilde v_L\in\calh$, where $\tilde L$ is a perturbation of $L$ with  
\begin{align*}
\varepsilon_L:= \sup\limits_{\norm{\calh}{f}\leq 1} \seminorm{}{L(f) - \tilde L(f)}. 
\end{align*}
Assume furthermore that there exists $1\leq q\leq \infty$ and $\tilde c_L>0$ such that $\tilde L$ is continuous on $L_q(\Omega)\cap \calh$ with norm bounded by 
$\tilde 
c_L$. 

Let $X_n:=X_n(\tilde L)\subset \Omega$ be the set of points selected after $n$ iterations of the greedy algorithm applied to $\tilde v_L$, and let 
$\tilde Q_{X_n, L}:=Q_{X_n, \tilde  L}$ be the corresponding weight-optimal quadrature rule. 
Then
\begin{align*}
\sup\limits_{\norm{\calh}{f}\leq 1}\seminorm{}{L(f) - \tilde Q_{X_n,L}(f)}^2 &\leq  c_{G} n^{-1/2} \left(c_{G} n^{-1/2} + 2 \varepsilon_L\right) 
+ \varepsilon_L^2,
\end{align*}
where $c_G:=\max\left\{\norm{\calh}{\tilde v_L}, \tilde c_{L} \left|\Omega\right|^{1/q}\max\limits_{x\in\Omega}{\sqrt{K(x, x)}} \right\}$.
\end{prop}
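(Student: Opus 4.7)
The plan is to reduce everything to the convergence bound from Theorem \ref{th:greedy_quad} applied to $\tilde L$, together with a Cauchy--Schwarz argument to control a cross term coming from the perturbation. The key observation is that the weight-optimal quadrature $\tilde Q_{X_n,L} = Q_{X_n,\tilde L}$ has $\Pi_{X_n}\tilde v_L$ as its Riesz representer (by Proposition \ref{prop:optimal_weights}), so for any $f\in\calh$ we can write
\begin{align*}
L(f) - \tilde Q_{X_n,L}(f) = \inner{\calh}{v_L - \Pi_{X_n} \tilde v_L, f},
\end{align*}
and hence the squared worst-case error equals $\norm{\calh}{v_L - \Pi_{X_n}\tilde v_L}^2$.

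Next I would split this residual as
\begin{align*}
v_L - \Pi_{X_n}\tilde v_L = \underbrace{(\tilde v_L - \Pi_{X_n}\tilde v_L)}_{=: \tilde r_n} + \underbrace{(v_L - \tilde v_L)}_{=: \delta_L},
\end{align*}
so that $\norm{\calh}{\delta_L} = \varepsilon_L$ (since $\delta_L$ is the Riesz representer of $L-\tilde L$) and, expanding,
\begin{align*}
\norm{\calh}{v_L - \Pi_{X_n}\tilde v_L}^2 = \norm{\calh}{\tilde r_n}^2 + 2\inner{\calh}{\tilde r_n, \delta_L} + \norm{\calh}{\delta_L}^2.
\end{align*}
Unlike in the previous stability proposition, orthogonality does not directly eliminate the cross term because $\delta_L$ need not lie in $V(X_n)$; instead I would bound it by Cauchy--Schwarz, $|\inner{\calh}{\tilde r_n,\delta_L}| \leq \varepsilon_L \,\norm{\calh}{\tilde r_n}$.

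The final step is to control $\norm{\calh}{\tilde r_n}$. Since the greedy algorithm was run on $\tilde v_L$ and $\tilde L$ is by hypothesis $L_q$-continuous with constant $\tilde c_L$, Theorem \ref{th:greedy_quad} applies verbatim to $\tilde L$ and gives $\norm{\calh}{\tilde r_n} \leq c_G n^{-1/2}$ with exactly the constant $c_G$ in the statement. Plugging this into the expansion yields
\begin{align*}
\norm{\calh}{v_L - \Pi_{X_n}\tilde v_L}^2 \leq c_G^2 n^{-1} + 2 c_G n^{-1/2} \varepsilon_L + \varepsilon_L^2 = c_G n^{-1/2}\bigl(c_G n^{-1/2} + 2\varepsilon_L\bigr) + \varepsilon_L^2,
\end{align*}
which is the desired inequality. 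There is no real obstacle here: the proof is essentially a triangle-inequality style decomposition; the only subtle point to highlight is that one must invoke Theorem \ref{th:greedy_quad} for the \emph{tilded} functional, which is why the assumption of $L_q$-continuity is imposed on $\tilde L$ rather than on $L$, and why the constant $c_G$ depends on $\norm{\calh}{\tilde v_L}$ and $\tilde c_L$.
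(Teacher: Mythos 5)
Your proof matches the paper's argument essentially step for step: the same decomposition $v_L - \Pi_{X_n}\tilde v_L = (\tilde v_L - \Pi_{X_n}\tilde v_L) + \delta_L$ with $\delta_L := v_L - \tilde v_L$, the same Cauchy--Schwarz bound on the cross term, and the same invocation of Theorem \ref{th:greedy_quad} for $\tilde L$ to control $\norm{\calh}{\tilde v_L - \Pi_{X_n}\tilde v_L}$. You also correctly flag why the $L_q$-continuity hypothesis must be placed on $\tilde L$ rather than on $L$, which is the one subtle point in the statement.
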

\begin{proof}
For simplicity of notation we set $X:=X_n$, since $n$ is fixed here. Defining $\delta_L$ as in the previous proposition we have
\begin{align*}
\sup\limits_{\norm{\calh}{f}\leq 1}\seminorm{}{L(f) - \tilde Q_{X,L}(f)}^2 
&=\norm{\calh}{v_L - \Pi_X\tilde v_L}^2
=\norm{\calh}{\tilde v_L +\delta_L - \Pi_X\tilde v_L}^2\\
&=\norm{\calh}{\tilde v_L - \Pi_X\tilde v_L}^2 + \norm{\calh}{\delta_L}^2 + 2 \inner{\calh}{\tilde v_L - \Pi_X\tilde v_L, \delta_L}\\
&\leq \norm{\calh}{\tilde v_L - \Pi_X\tilde v_L}^2 + \norm{\calh}{\delta_L}^2 + 2 \norm{\calh}{\tilde v_L - \Pi_X\tilde 
v_L}\norm{\calh}{\delta_L}\\
&= \norm{\calh}{\tilde v_L - \Pi_X\tilde v_L} \left(\norm{\calh}{\tilde v_L - \Pi_X\tilde v_L} + 2 \norm{\calh}{\delta_L}\right) + 
\norm{\calh}{\delta_L}^2,
\end{align*}
and inserting the estimate of Theorem \ref{th:greedy_quad} for the functional $\tilde L$ gives the result.
\end{proof}

\section{Numerical experiments}\label{sec:numerics}

In this section we test the greedy algorithm on various integration test problems. We start by an example where greedy points provide the same rate of 
convergence of the worst case error as uniform points, but possibly with an arbitrarily better constant, and then we analyze a case where also the rate is 
strictly 
better for greedy points. Then, we show how the algorithm performs on a manifold, and finally we test the method on an Uncertainty Quantification benchmark 
problem. 

All the experiments use the Python implementation \cite{VKOGA_python} of the $f/P$-greedy algorithm.

\subsection{Integration with a compactly supported density}\label{sec:exp_square}
We consider the unit square $\Omega:=[0,1]^2\subset\R^2$ as input domain and a quadratic Matern kernel, which is defined as $K(x, y) := 
\phi(\|x-y\|)$ with 
\begin{align*}
\phi(r): = e^{-\gamma r} \left(3 + 3 \gamma r + (\gamma r) ^2\right), 
\end{align*}
with a free parameter which is set to the value $\gamma = 1$ in 
this experiment. The kernel corresponds to a value of $\tau = 4$ for the decay of the Fourier transform in \eqref{eq:fourier_exponent} (see e.g. Appendix D in 
\cite{Fasshauer2007}).

We consider  the integration functional
\begin{align*}
L(f):= \int_{\Omega} f(x) \nu(x) dx \;\;\fa\;\; f\in\calh,
\end{align*}
where $\nu$ is the indicator function of the square $[0.3,0.5]\times [0.6,0.8]\subset\Omega$. One would expect that optimal quadrature points for this 
functional are uniformly distributed inside the support of $\nu$.

We approximate $L$ with a Monte Carlo approximation $\tilde L$ that uses $M:=10^4$ uniformly randomly distributed 
points on the square, i.e.,
\begin{align*}
\tilde L(f) := \frac1M \sum_{i=1}^M f(x_i) \nu(x_i).
\end{align*}
This approximation is used to compute the Riesz representer $\tilde v_L$ which is used to construct the approximant, as discussed in Theorem 
\ref{th:stability_two}. 

The greedy algorithm is run by selecting points from a uniform grid $X_{tr}$ of $10^4$ points, and it is terminated when $500$ points are selected, or when 
the maximal absolute interpolation error on $X_{tr}$ is below the tolerance $10^{-12}$. In this way, $n=358$ points are selected, and they are shown in Figure 
\ref{fig:square_weights}. The points are colored according to the magnitude of the corresponding weight, and they are overlapped to the contour lines of the 
density $\nu$. 

\begin{figure}[h!]
\begin{center}
\includegraphics[width=\textwidth]{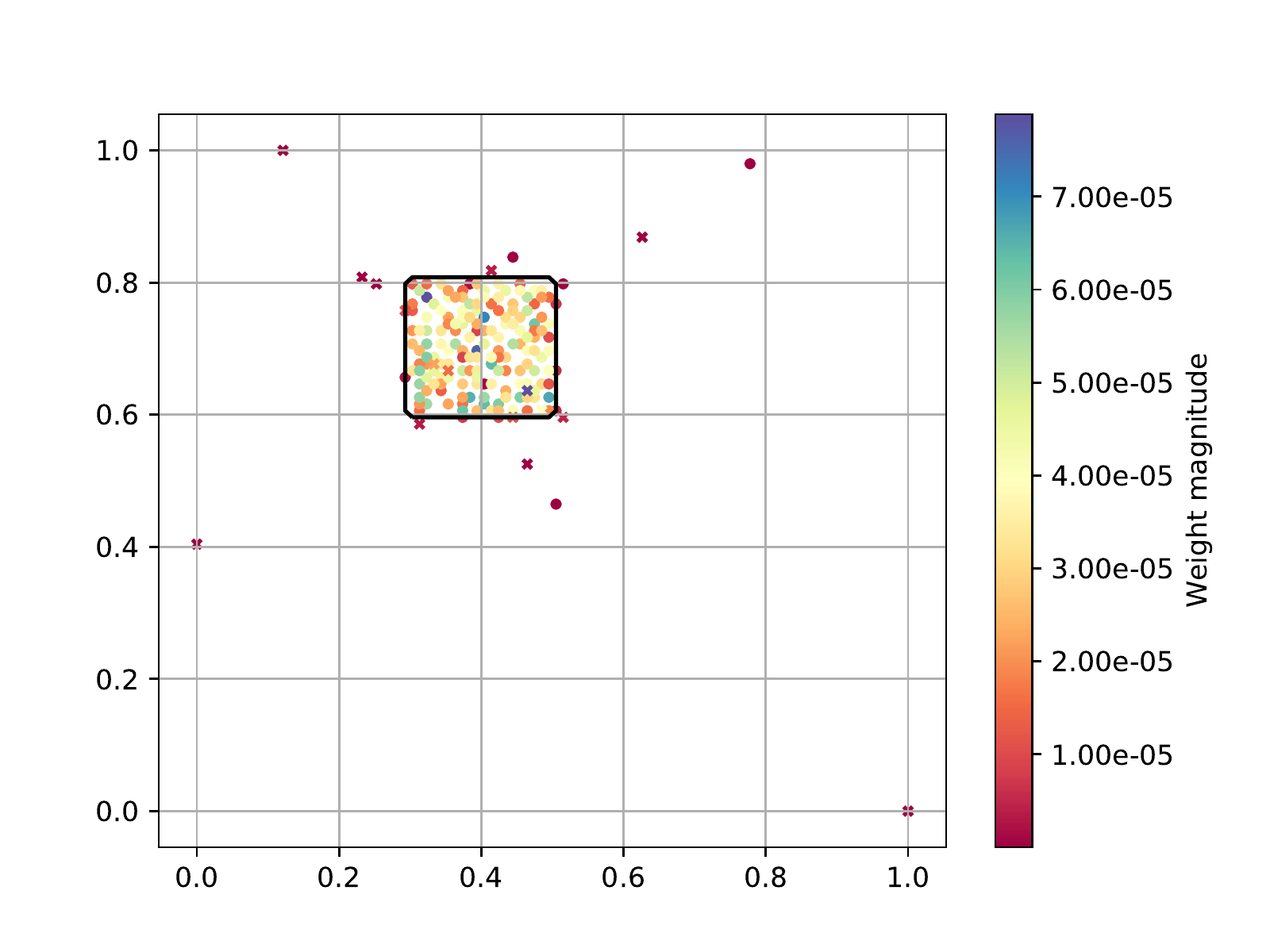}
\end{center}
\caption{
Results of the greedy algorithm in the example of Section \ref{sec:exp_square}. The figure shows the contour plot of the density $\nu$ (bold grayscale 
line), and the position of the points selected by the algorithm, which are colored according to the magnitude of the corresponding positive (circles) or 
negative 
(crosses) weight.
}
\label{fig:square_weights}
\end{figure}

The greedy algorithm selects almost all points inside the support of the density, which is the behavior one would 
expect from an optimal algorithm. Nevertheless, some points are selected in the area where $\nu=0$, even if the corresponding weights are relatively small.
We remark that this behavior may be caused by the use of the approximate Riesz representer.

Moreover, most of the weights are positive, and the negative ones are mostly of small magnitude.

In Figure \ref{fig:square_convergence} we show the worst-case error w.r.t. $\tilde L$ obtained with these greedy points. Since the points are nested, it is 
possible to show the decay of the worst case error as a function of the number of centers. As a comparison, we report also the decay of the worst case error 
for 
the optimal quadrature rule which uses uniform grids of points of increasing cardinality. The figure suggests that both approximation errors decay as 
$n^{-\tau/d}$, and in particular, after an initial phase, the convergence of the greedy error is faster than $n^{-1/2}$, and this suggests that indeed the rate 
of Theorem \ref{th:greedy_quad} is in general suboptimal. 

\begin{figure}[!h]
\begin{center}
\includegraphics[width=\textwidth]{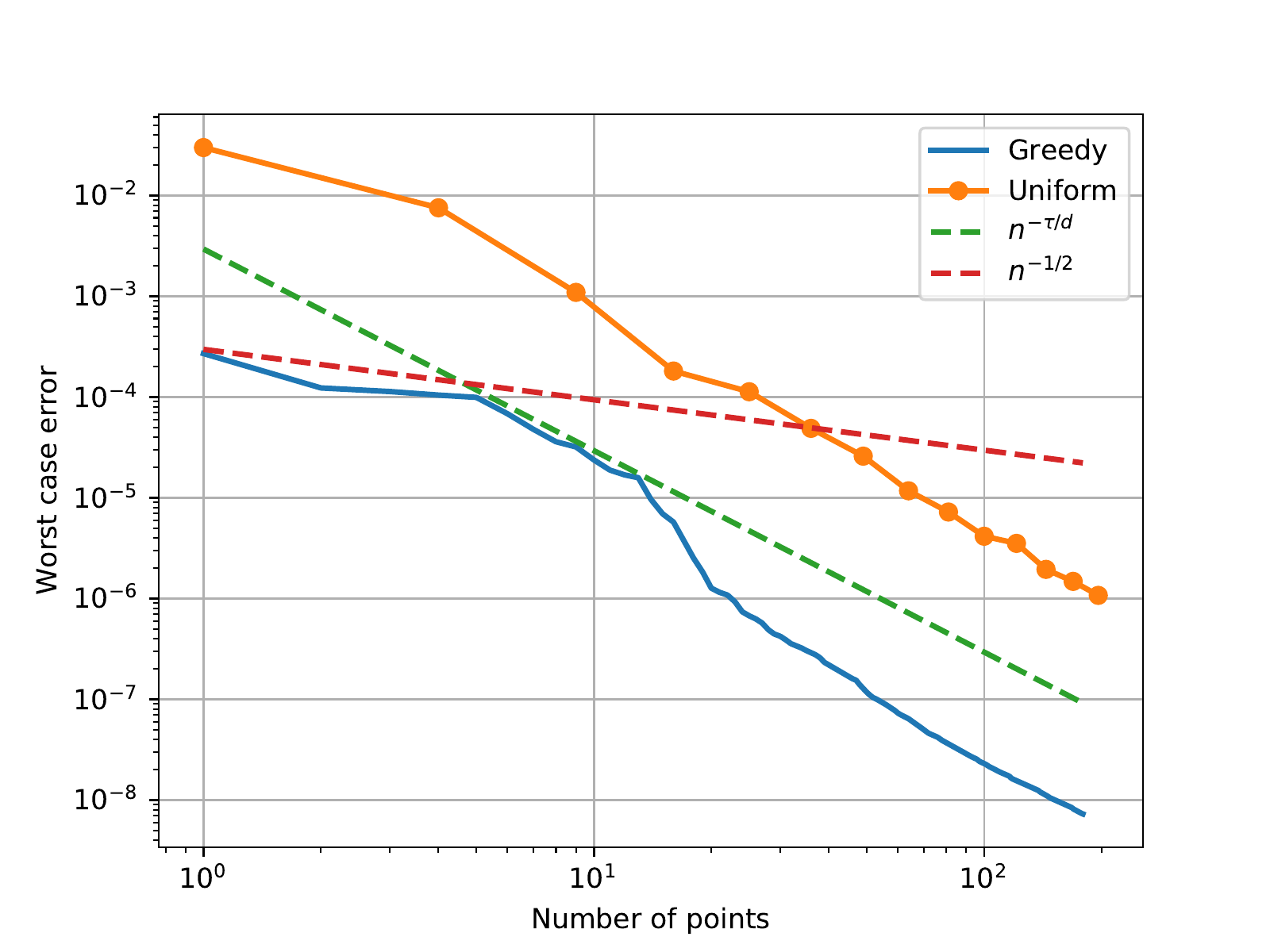}
\end{center}
\caption{
Decay of the worst-case error as a function of the number of points for the example of Section \ref{sec:exp_square}. The figure shows the error for the greedy 
points and for a grid of equally spaced points, and rates of decay scaled to the greedy error.
}
\label{fig:square_convergence} 
\end{figure}

Moreover, although the rate of convergence is the same for the two distribution of points in the asymptotic regime, the greedy error is smaller and the ratio 
$\rho$ between 
the two errors is roughly constant as a function of the number of points. This is due to the fact that the uniform points are 
filling the full $\Omega$, while 
the greedy ones fill only the support of $\nu$. We remark that this ratio $\rho$ can be made arbitrarily small by reducing the support of $\nu$, but no 
improvements 
should be expected in the rate of convergence by using greedy methods.

\subsection{Integration with a singular density}\label{sec:exp_singular}
We consider here $\Omega := [0,1]^2 \subset \R^2$ and $L(f) := \int_{\Omega} f(x) \nu(x) d x$, and set $\nu(x) = \|x-x_c\|^{-\alpha}$ with $\alpha 
\geq 0$ and $x_c:=[0.5, 0.5]^T$.

To obtain the best rate of convergence of the error in Theorem \ref{th:uniform_quad}, one should choose the smallest possible $q$ that satisfies the hypotheses 
of the theorem for a given $L$. Indeed, if $q$ is sufficiently small the term $(1/2 - 1/q)_+$ vanishes in the exponent. In this case, we can choose the 
smallest $q$ such that $\nu \in L_p(\Omega)$ with $1/p + 1/q = 1$, and since $\nu \in L_p(\Omega)$ if and only if $p < d/\alpha = 2/\alpha$, it follows 
that the limiting value is $q = 2 / (2 - \alpha)$, which gives $(1/2 - 1/q)_+ = (\alpha/2 - 1/2)_+ $. 

We thus consider values $\alpha \in\{1, 3/2, 2, 5/2\}$, and run the experiments with the same kernel, the same approximation of $L$, and the same setting 
of the greedy algorithm as in Section \ref{sec:exp_square}. The results are reported in Figure \ref{fig:singular}, where the points and weights of the greedy 
approximation are shown, together with the rate of convergence of the greedy and uniform quadrature as functions of the number of points. 

The figure clearly shows that the greedy points are increasingly concentrated around the singularity of $\nu$ as $\alpha$ increases, with weights which are 
larger. Moreover, the rates of convergence show that quadrature rules with uniform points have a worst case error converging to zero as $n^{-\tau/d + (\alpha/2 
- 1/2)_+}$, which is strictly slower than $n^{-\tau/d}$ for $\alpha>1$. The greedy algorithm, on the other hand, selects non uniform points and it provides a 
convergence like $n^{-\tau/d}$. Observe that in the case of $\alpha=2$, the convergence of the greedy algorithm seems to saturate. This is probably due to the 
fact that the greedy algorithm is implemented by selecting points from a fixed grid, and thus there is a limit on the maximal clustering that can be achieved 
around the singularity.

In other words, uniform points are sub-optimal for too skewed linear functionals.

\begin{figure}
\begin{center}
\begin{tabular}{ccc}
\rotatebox{90}{\hspace{1.8cm}$\boldsymbol{\alpha = 1}$}&
\includegraphics[width=.45\textwidth]{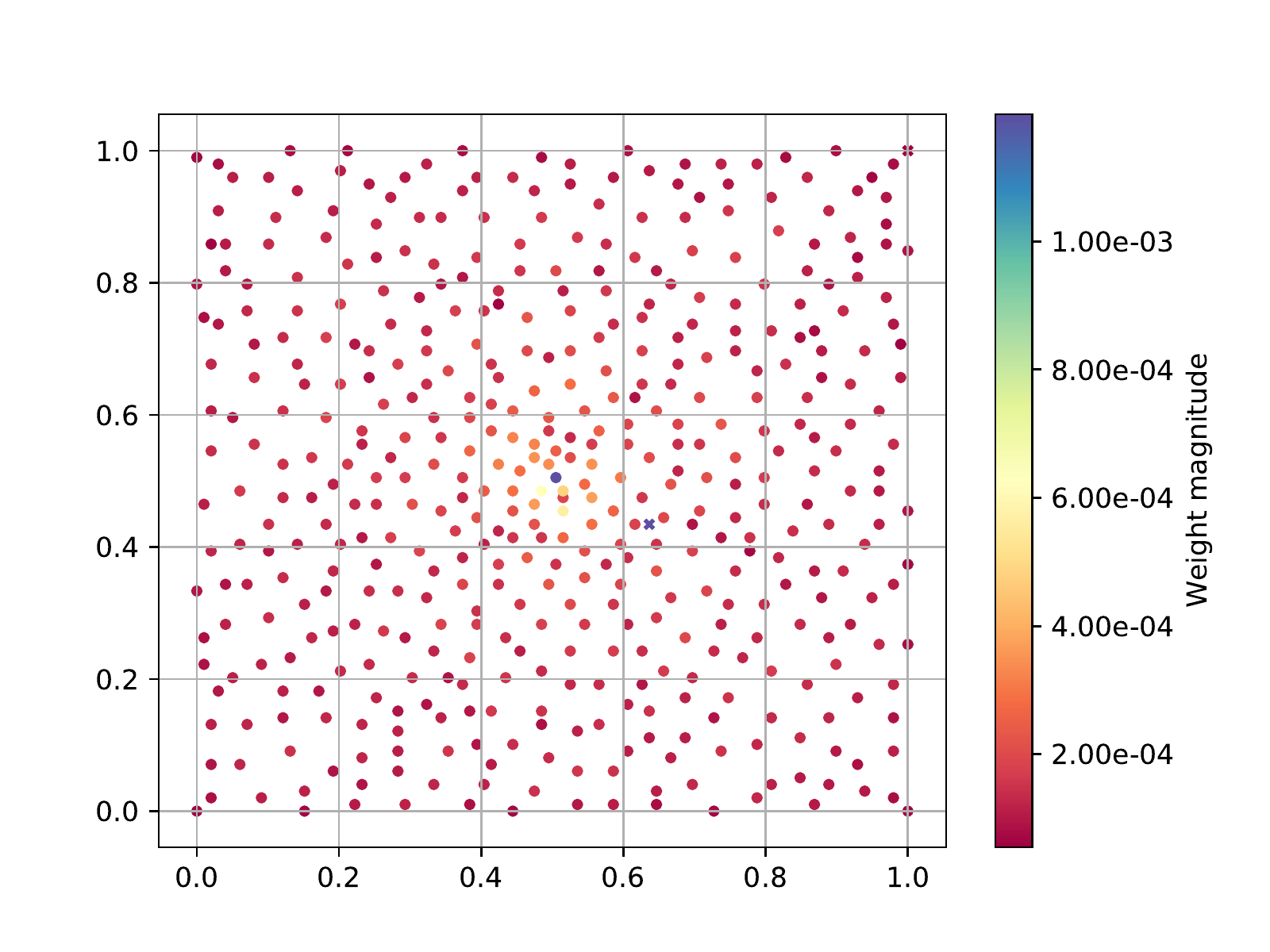}
&\includegraphics[width=.45\textwidth]{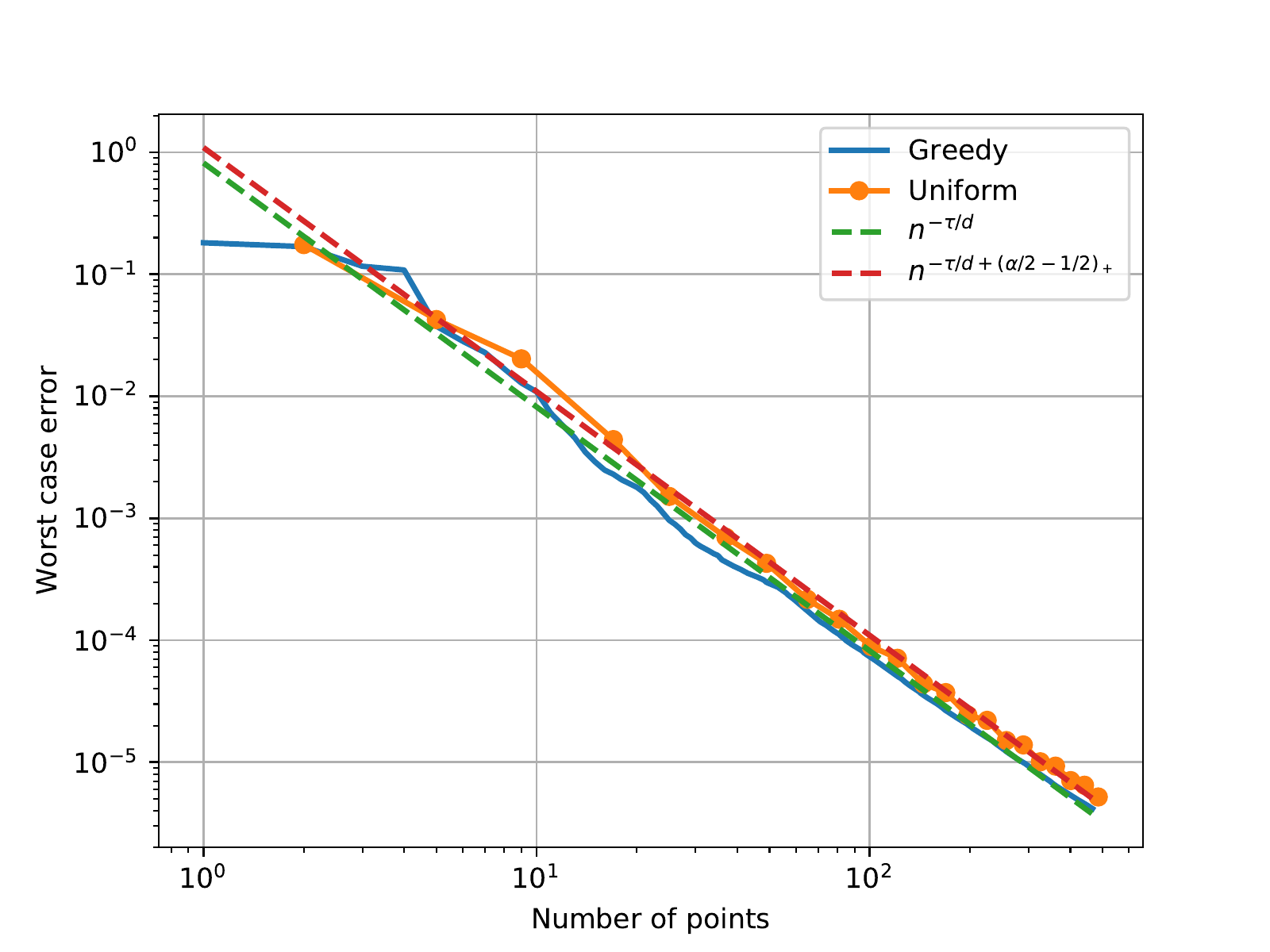}\\
\rotatebox{90}{\hspace{1.8cm}$\boldsymbol{\alpha = 3/2}$}&
\includegraphics[width=.45\textwidth]{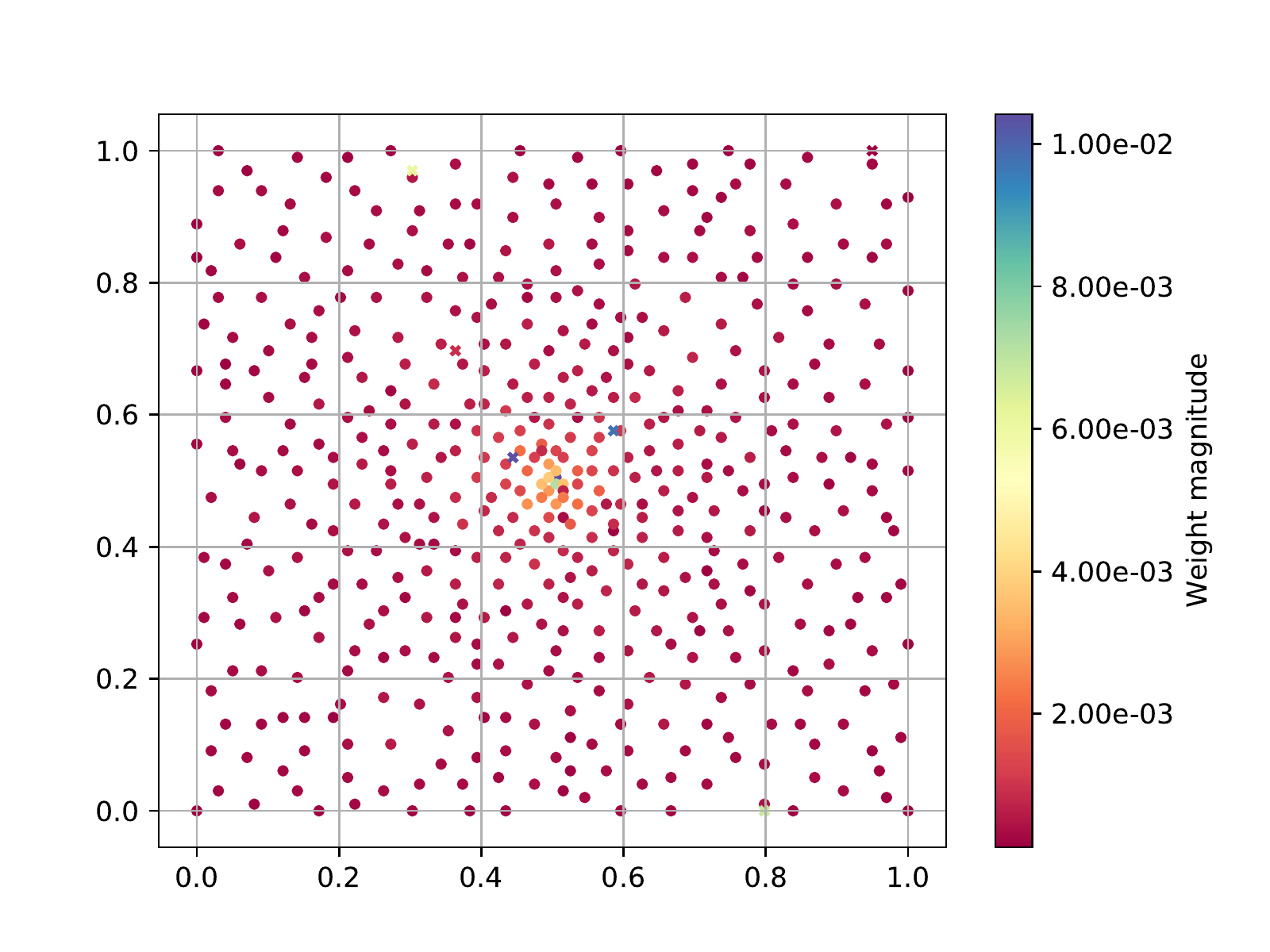}
&\includegraphics[width=.45\textwidth]{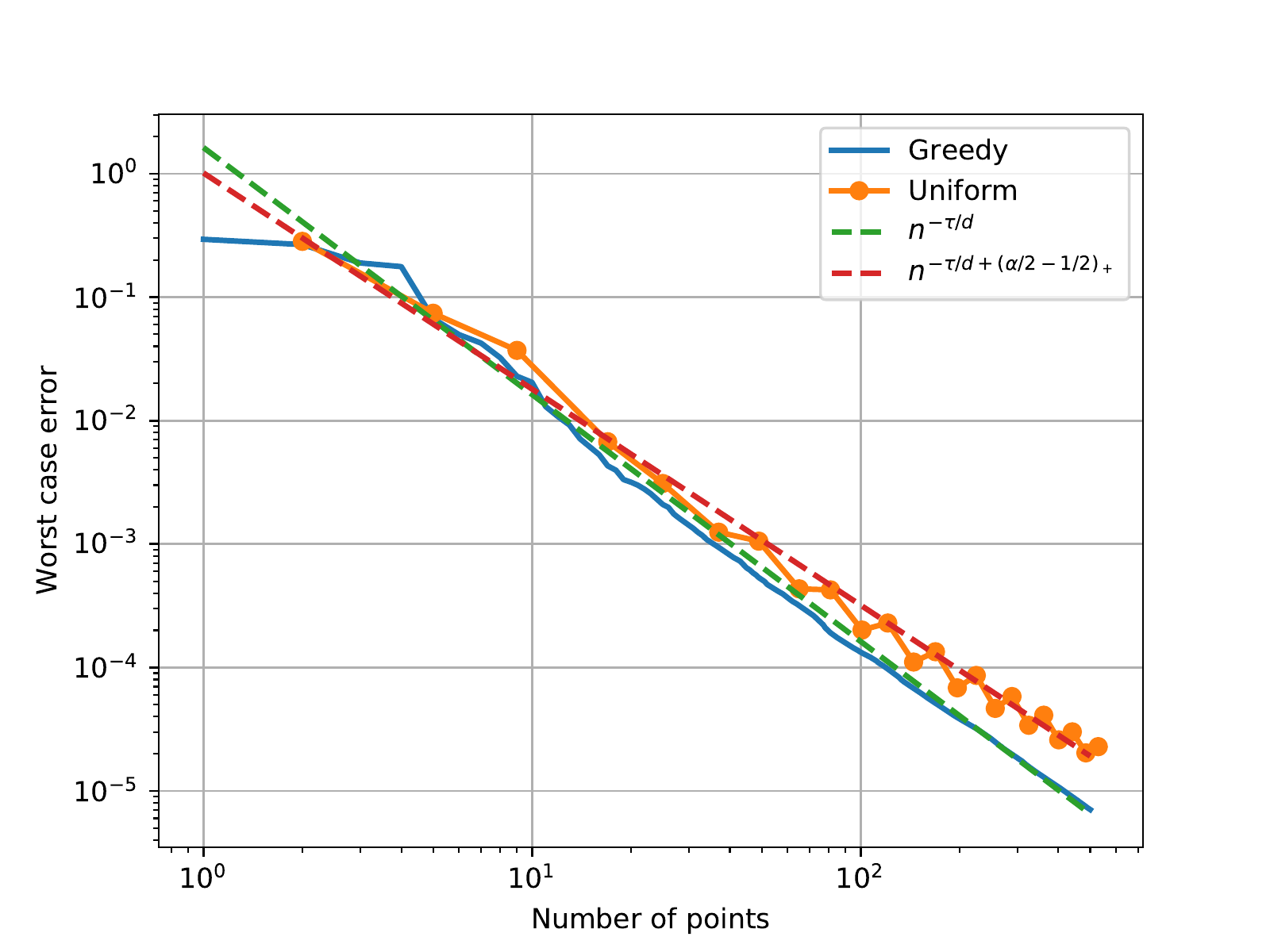}\\
\rotatebox{90}{\hspace{1.8cm}$\boldsymbol{\alpha = 2}$}&
\includegraphics[width=.45\textwidth]{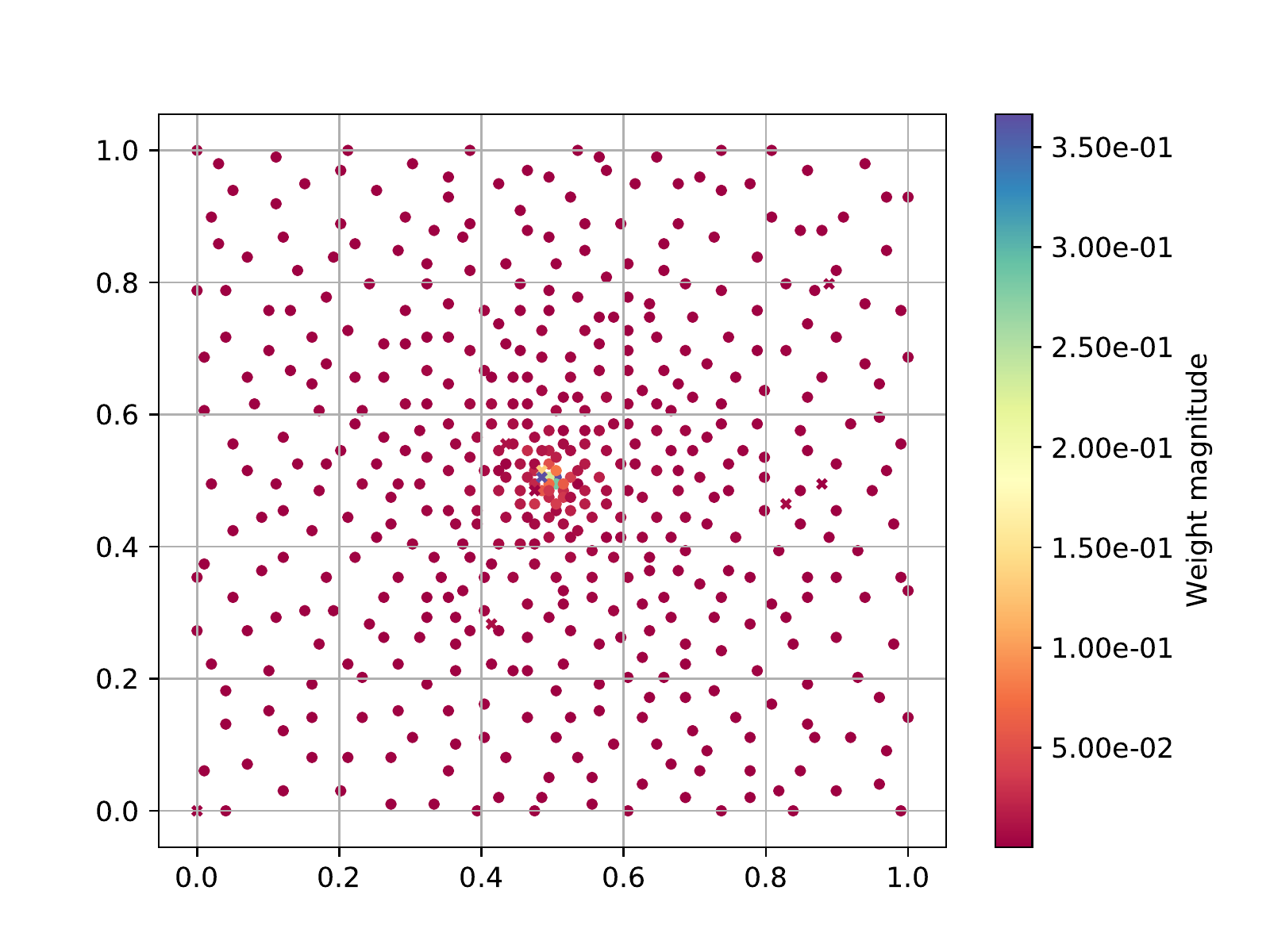}
&\includegraphics[width=.45\textwidth]{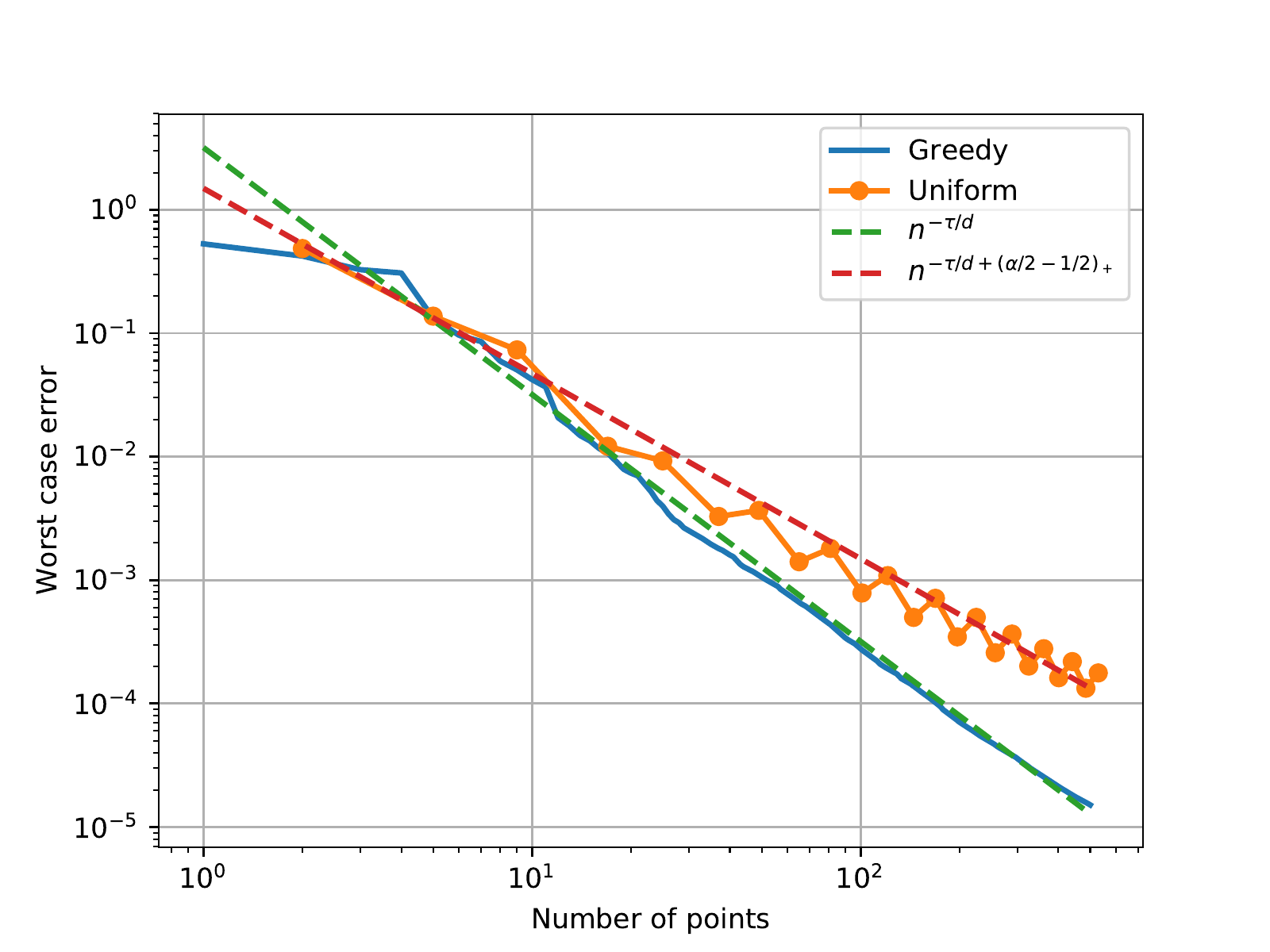}\\
\rotatebox{90}{\hspace{1.8cm}$\boldsymbol{\alpha = 5/2}$}&
\includegraphics[width=.45\textwidth]{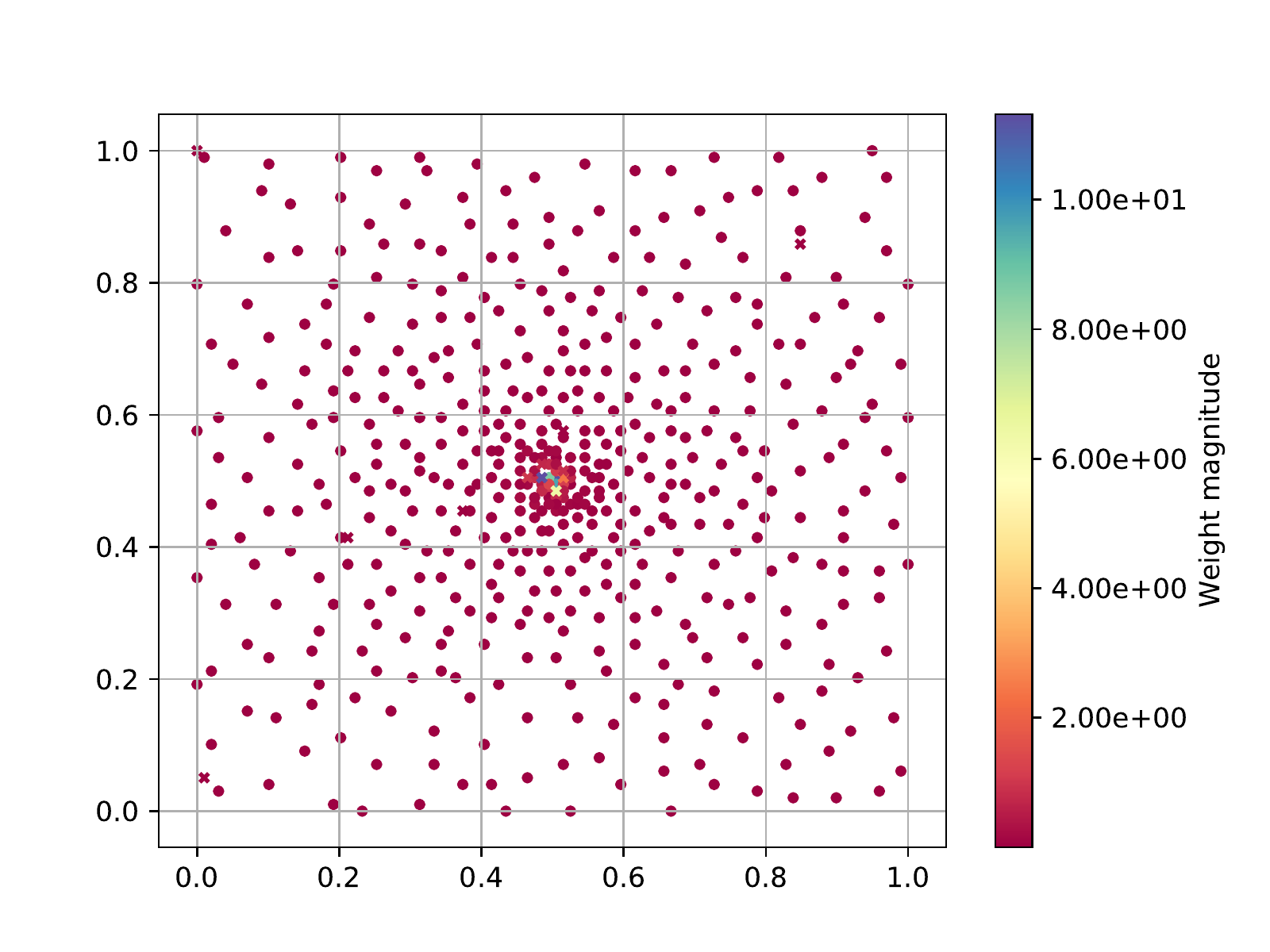}
&\includegraphics[width=.45\textwidth]{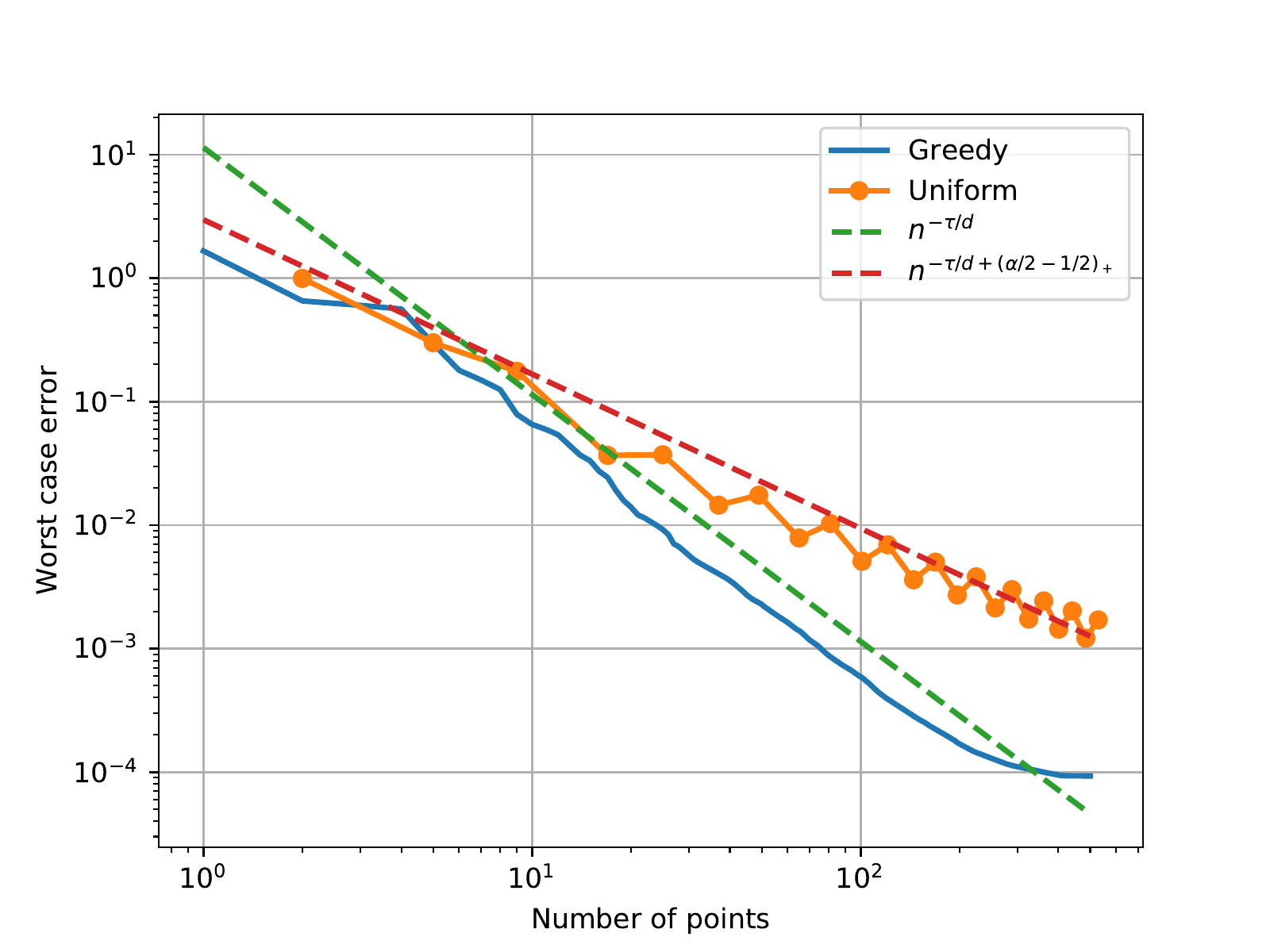}
\end{tabular}
\end{center}
\caption{
Results of the greedy algorithm in the example of Section \ref{sec:exp_singular} for values of $\alpha\in\{1, 3/2, 2, 5/2\}$ (from top to bottom). The left 
column shows the position of the points selected by the algorithm, which are colored according to the magnitude of the corresponding positive (circles) or 
negative 
(crosses) weight (left column). The right column shows the decay of the worst-case error as a function of the number of points, 
both for greedy and equally spaced points. The rates of decay are scaled by a constant.
}
\label{fig:singular}
\end{figure}

\subsection{Integration on a manifold}\label{sec:exp_sphere}

A similar experiment as in Section \ref{sec:exp_square} is repeated on the sphere $\Omega:=\mathbb S^2\subset\R^3$. Assuming $\Omega$ is represented in 
Cartesian coordinates, we 
consider the integration functional
\begin{align*}
L(f):= \int_{\Omega} f(x) \nu(x) dx \;\;\fa\;\; f\in\calh,
\end{align*}
where 
\begin{align*} 
\nu(x):= \exp\left((x - x_c)^T \Sigma (x - x_c) \right), \quad x_c:= [0, -1, 0]^T, 
\end{align*}
and $\Sigma$ is the diagonal matrix with diagonal $[-5, -5, -3]^T$. The approximation $\tilde L$ of $L$ is similarly realized with a Monte Carlo 
approximation with $M:=10^4$ uniformly random points. We use the same setting as in Section \ref{sec:exp_square} for the kernel, its parameters, the 
greedy algorithm and the termination criteria, with the only difference that the points are selected starting from a set of uniformly random points on the 
sphere.

Moreover, we compare the greedy points with sets of minimal energy points (see e.g. \cite{Hardin2004}). For each $n$, these sets $X_n:=\{x_i\}_{i=1}^n$ are 
defined as minima on $(\mathbb S^2)^n$ of the Riesz energy
\begin{align*}
E(X_n) := \sum_{i=1}^{n} \sum_{j=i+1}^{n} \left\|  x_i - x_j  \right\|^{-2}.
\end{align*}
We use here the precomputed points from \cite{SpherePts}, which are found by numerical 
minimization of this functional.

In this case the density is not constant and does not have compact support, and this is reflected in the fact that the points selected by the greedy algorithm 
are 
more spread over the entire $\Omega$ (see Figure \ref{fig:sphere_weights}). Nevertheless, the points are more concentrated in the area where the density $\nu$ 
is larger, and the weights are accordingly larger. Also in this experiment, a few weights of small magnitude are negative.

Again, the convergence of the worst-case error (see Figure \ref{fig:sphere_convergence}) shows that the greedy algorithm produces quadrature weights with a 
worst-case error decaying like $n^{-\tau/d}$, if one sets $d=2$ as the dimension of $\Omega$ as an embedded manifold in $\R^3$. This is in accordance with 
known error estimate for kernel interpolation on manifolds (see \cite{Wright2012} and Remark \ref{rem:manifold}). The same behavior is clearly observed with 
integration with the minimal energy points.

\begin{figure}[h!]
\begin{center}
\includegraphics[width=\textwidth]{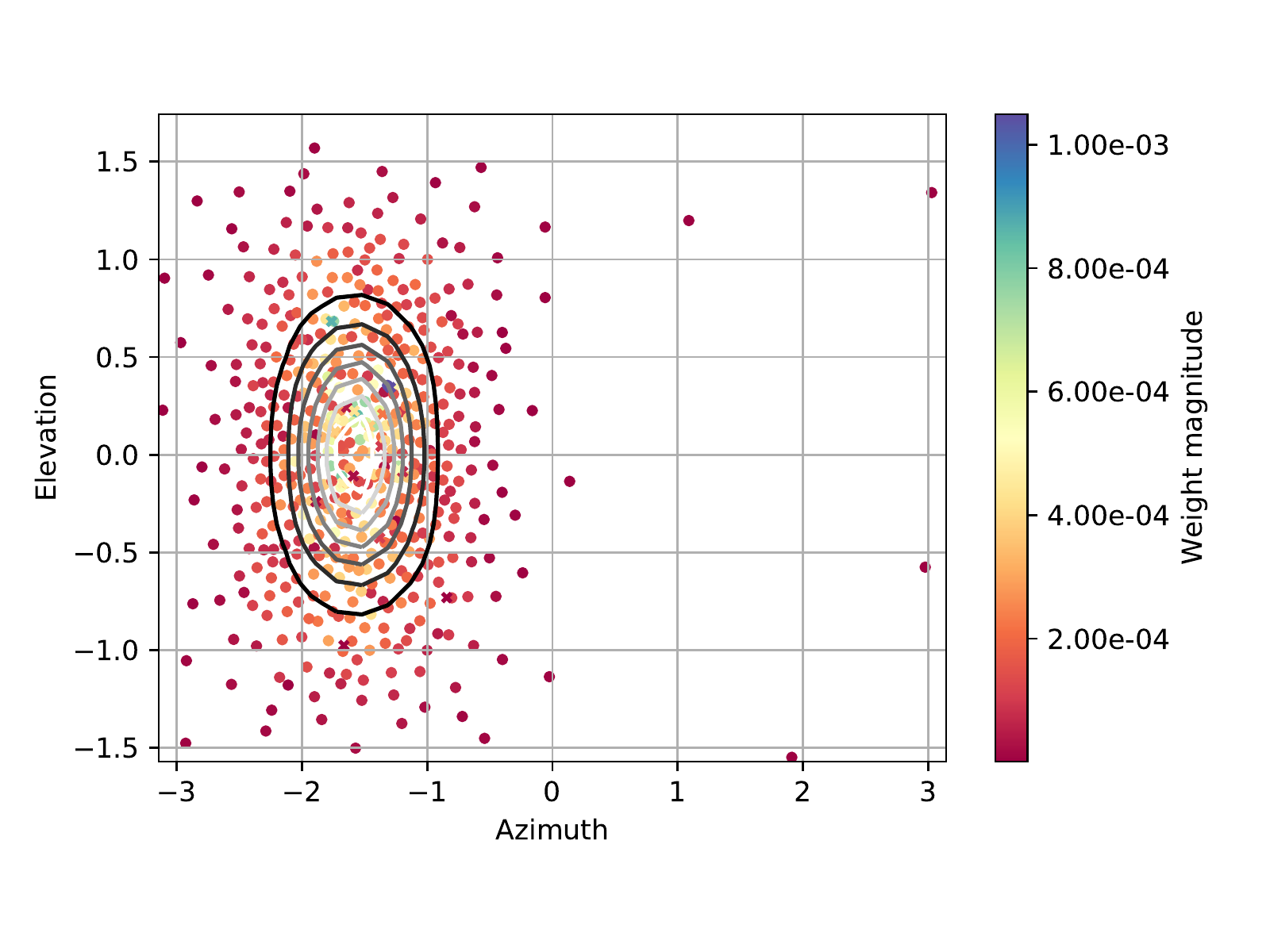}
\end{center}
\caption{Results of the greedy algorithm in the example of Section \ref{sec:exp_sphere}, representer in spherical coordinates. The figure shows the contour 
plot of the density $\nu$ (grayscale lines), and the position of the points selected by the algorithm, which are colored according to the magnitude of the 
corresponding positive (circles) or 
negative 
(crosses) weight.}
\label{fig:sphere_weights}
\end{figure}

\begin{figure}[h!]
\begin{center}
\includegraphics[width=\textwidth]{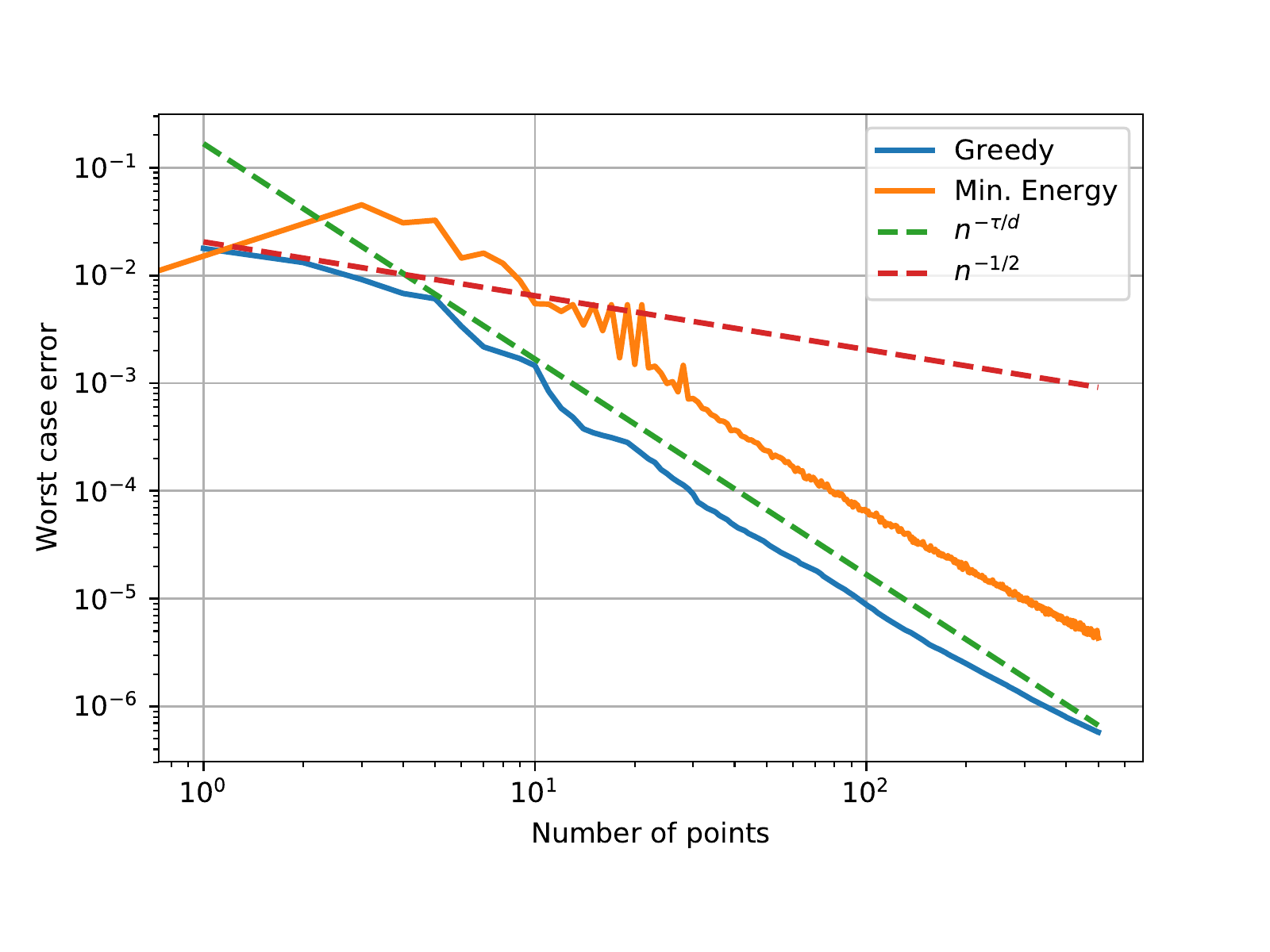}
\end{center}
\caption{
Decay of the worst-case error as a function of the number of points for the example of Section \ref{sec:exp_sphere}. The figure shows the error for the greedy 
points and for sets of minimal energy points, and rates of decay scaled to the greedy error. Observe that $d$ is the dimension of the manifold, and not of 
the embedding space. 
}
\label{fig:sphere_convergence}
\end{figure}

\subsection{An Uncertainty Quantification example}\label{sec:exp_uq}
As a final example we test the greedy algorithm on the benchmark Uncertainty Quantification (UQ) problem described in \cite{Koeppel2018}. 

We briefly describe the setting of the problem, and we refer to the cited paper for a thorough discussion. We 
have a Partial Differential Equation (PDE) modelling a two-phase flow in a porous medium, which depends on three input parameters (the injection rate, the 
relative permeability degree, and the reservoir 
porosity) and represents the saturation of some carbon dioxide which is injected into a one-dimensional aquifer over a time interval $[0, T]$. 

The aquifer is discretized into $250$ equal sized cells, and for a fixed value of the parameter triple $\theta:=[\theta_1, \theta_2, 
\theta_3]^T\in\R^3$, a time-dependent numerical solution of the PDE can be computed by the Finite Volume (FV) method. 
We denote as $s(\theta)\in\R^{250}$ the nodal values of the numerical solution at the final time $T$ computed with parameters $\theta$, and thus the FV 
discretization defines a map $\theta\in\R^3\mapsto s(\theta)\in\R^{250}$.

Instead than just computing the solution for a fixed value, in this case one is interested to quantify the effect on the solution $s(\theta)$ of an uncertain
knowledge of the input parameters. In particular, each parameter triple $\theta:=[\theta_1, \theta_2, \theta_3]^T$ is assumed to represent a sample from 
three random variables with known distributions, and thus the solution $s(\theta)$ itself is a random variable with values in $\R^{250}$. The goal of the 
benchmark problem is to estimate the mean $\mu_s\in\R^{250}$ and the standard deviation $\sigma_s\in\R^{250}$ of this random vector using as few solutions of 
the PDE as possible.

The benchmark includes also a dataset \cite{UQ_comparison_dataset} which contains a spatial discretization of the input parameters into $N:=10000$ 
points, i.e., a set $X:=\{\theta_i\}_{i=1}^N\subset\R^{3}$ representing independent samples of the parameters drawn accordingly to the respective 
distributions, 
and an implementation of the FV solver to compute the values $Y:=\{y_i:=s(\theta_i)\}_{i=1}^N\subset\R^{250}$. Moreover, the mean and standard deviation 
vectors computed by the various methods analyzed in \cite{Koeppel2018} are also available for comparison. As a reference solution, the paper uses the integral 
computed by a Monte Carlo approximation which uses the full set of nodes $X$.

In this case, we run the greedy algorithm with the same Matern kernel and $\gamma:=1/2$. Following \cite{Koeppel2018}, the points are selected from $X$ itself, 
since this discretization incorporates information of the distribution of the parameters and this is assumed to be a known information. The resulting 
quadrature rule is applied to each of the $250$ entries of the solution vector. Namely, if we assume that the points selected by the greedy algorithm are the 
subset $X_n:=\{\theta_{i_1}, \dots, \theta_{i_n}\}\subset X$, we obtain an approximated mean vector $\tilde \mu_s\in\R^{250}$ as
\begin{align*}
\left(\tilde \mu_s\right)_j := Q_{X_n}(s_j) = \sum_{k=1}^n w^*_k (y_{i_k})_j,
\end{align*}
where $(y_{i_k})_j$ is the $j$-th component of the $i_k$-th output vector. Similarly, the approximated standard deviation is the vector $\tilde 
\sigma_s\in\R^{250}$ with
\begin{align*}
\left(\tilde \sigma_s\right)_j := \left(   \sum_{k=1}^n w^*_k (y_{i_k})_j^2 -  \left(\tilde \mu_s\right)_j^2\right)^{1/2},
\end{align*}
where we used the fact that the variance is the difference between the mean of the square and the square of the mean.

Observe that this process is actually extracting a compressed quadrature rule from the reference Monte Carlo one.

For these approximated vectors, we can compute the $\ell_2$ errors with respect to the reference mean and standard deviation provided by the Monte Carlo 
quadrature. Following \cite{Koeppel2018}, these are computed as
\begin{align*}
E_\mu:= \frac{1}{250}\left\|\mu_s - \tilde\mu_s \right\|,\quad E_\sigma:= \frac{1}{250}\left\|\sigma_s - \tilde\sigma_s \right\|.
\end{align*}
The values of $E_{\mu}$, $E_\sigma$ are reported in Figure \ref{fig:UQ} for increasing values of the number of quadrature points. We also report the same 
errors obtained in the cited paper using some state-of-the-art methods, namely arbitrary polynomial chaos expansion (aPC), spatially adaptive sparse grids 
(aSG), $P$-greedy kernel interpolation (PGreedy), and Hybrid stochastic Galerkin (HSG). It is clear that the present method yields comparable results, and 
it even outperforms them if sufficiently many centers are used. 

We remark that in the plot (and in the paper) for each of the methods two parameter sets are 
tested. Also in the case of the algorithm of this paper, a quite high sensitivity to the parameter $\gamma$ of the kernel was observed, even if we report only 
the results for a representative value.

\begin{figure}[h!t]
\begin{center}
\begin{tabular}{c}
\includegraphics[width=\textwidth]{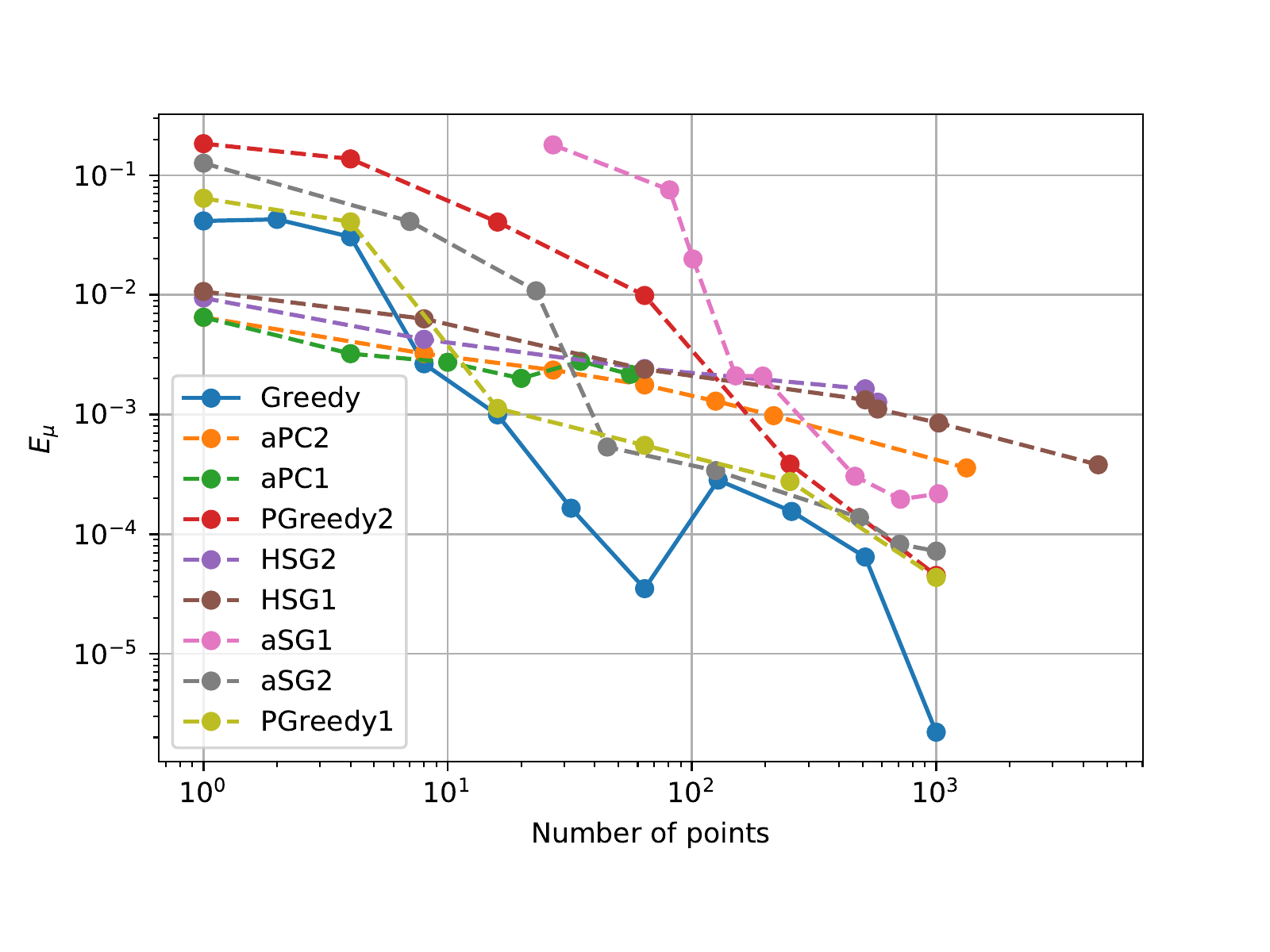}\\
\includegraphics[width=\textwidth]{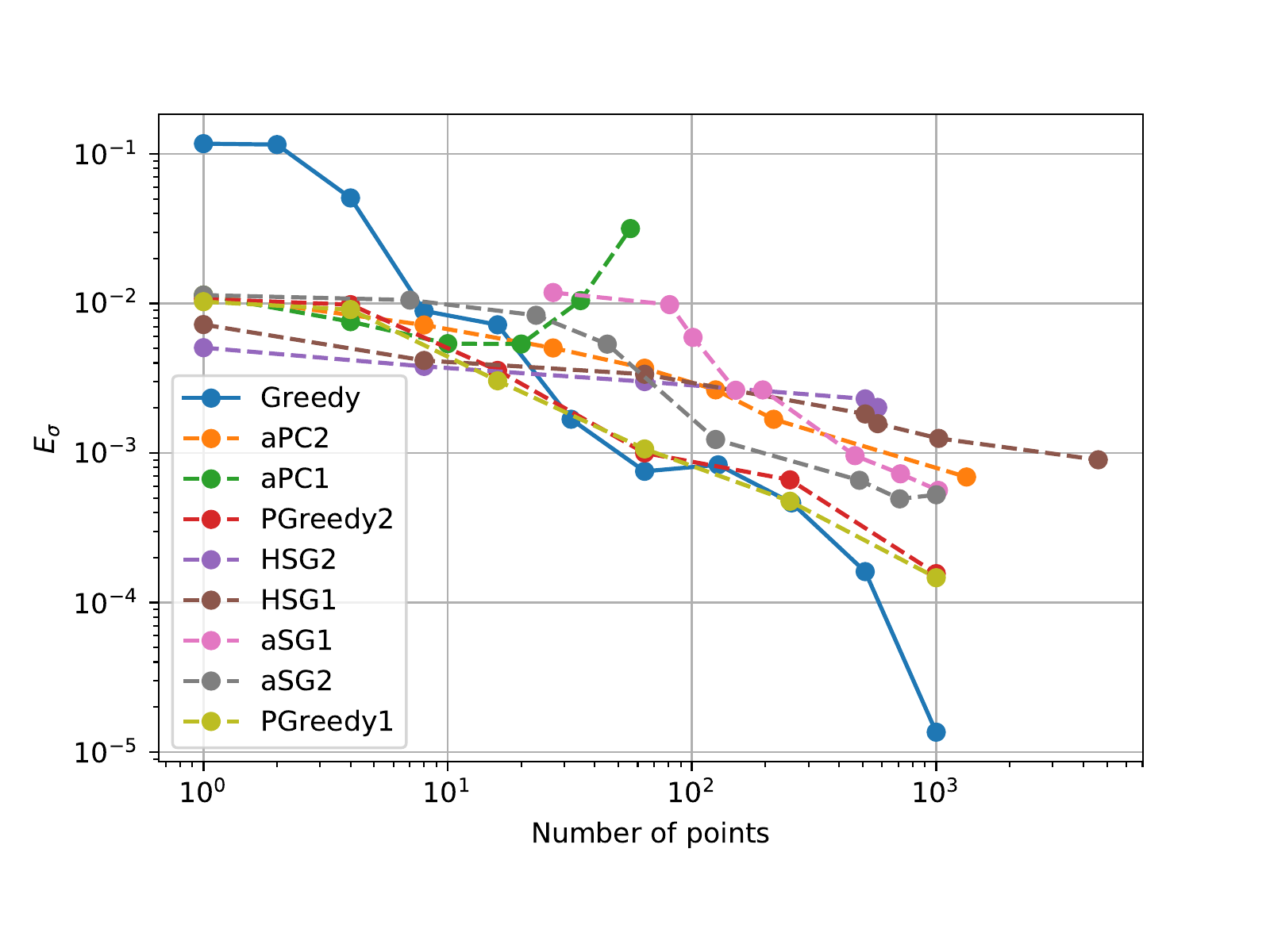}
\end{tabular}
\end{center}
\caption{Errors $E_{\mu}$ (approximation of the mean, upper figure) and $E_{\sigma}$ (approximation of the standard deviation, lower figure) as functions of 
the number of points obtained by the greedy algorithm in the example of Section \ref{sec:exp_uq}.
The results are compared with the ones obtained with 
arbitrary polynomial chaos expansion (aPC), spatially adaptive sparse grids 
(aSG), $P$-greedy kernel interpolation (PGreedy), and Hybrid stochastic Galerkin (HSG) from \cite{Koeppel2018}.
}\label{fig:UQ}
\end{figure}

\section{Future work}
The numerical experiments strongly suggest that the theoretical rates obtained for the greedy points are suboptimal, and this behavior would deserve 
additional investigation. Moreover, future work should focus on studying the sign of the weights, and on a stability analysis of the greedy  quadrature 
formulas.

\vspace{1cm}
\noindent\textbf{Acknowledgements:} We thank Tizian Wenzel for several comments on an early version of this manuscript.

\bibliography{bib-toni}
\bibliographystyle{abbrv}

\end{document}